\numberwithin{equation}{section}
\newtheorem{proposition}{Proposition}[section] 
\DeclareMathOperator{\opP}{P}
\DeclareMathOperator{\opPz}{P_0}
\DeclareMathOperator{\opPZ}{P^0}
\newcommand{\zr}[1]{\smash{\stackrel{\raisebox{-5pt}{$\scriptscriptstyle 0$}}{#1}}}
\newcommand{\sep}{\unskip,\ }
\newcommand{\MSC}[1][2010]{\noindent\textit{{#1} MSC:}}
\newenvironment{keyword}{\noindent\textit{Keywords:}}{}
\begin{document}




\title{Galerkin finite element methods for the Shallow Water equations over 
variable bottom}


\author[1,2]{G. Kounadis}
\author[1,2]{V.A. Dougalis} 

\affil[1]{Department of Mathematics, National and Kapodistrian 
University of Athens, 15784 Zografou, Greece} 
\affil[2]{Institude of Appled and Computational Mathematics, FORTH, 
70013 Heraklion, Greece} 

\date{\vspace{-5ex}}
\maketitle

\begin{abstract}
We consider the one-dimensional shallow water equations (SW) in a finite channel 
with variable bottom topography. We pose several initial-boundary-value problems 
for the SW system, including problems with transparent (characteristic) boundary 
conditions in the supercritical and the subcritical case. We discretize these 
problems in the spatial variable by standard Galerkin-finite element methods and 
prove $L^2$-error estimates for the resulting semidiscrete approximations. We 
couple the schemes with the 4\textsuperscript{th} order-accurate, explicit, 
classical Runge-Kutta time stepping procedure and use the resulting fully 
discrete methods in numerical experiments of shallow water wave propagation over 
variable bottom topographies with several kinds of boundary conditions. We 
discuss issues related to the attainment of a steady state of the simulated 
flows, including the good balance of the schemes. 
\end{abstract}

\begin{keyword}
Shallow water equations \sep Standard Galerkin finite element method \sep error 
estimates \sep characteristic boundary conditions \sep variable bottom 
topography


\MSC[2010] 65M60 \sep 65M12 
\end{keyword}



\section{Introduction}
\label{sec:Introduction}

In this paper we will consider standard Galerkin finite element approximations 
to the one-dimensional system of shallow water equations over a variable bottom 
that we write following \cite{P}, as 
\begin{align} \label{eq:SW} \tag{SW} 
\begin{aligned} 
 & \eta_t + (\eta u)_x + (\beta u)_x = 0, \\ 
 & u_t + \eta_x + uu_x = 0. 
\end{aligned} 
\end{align} 
The system \eqref{eq:SW} approximates the two-dimensional Euler equations of 
water wave theory and models two-way propagation of long waves of finite 
amplitude on the surface of an ideal fluid in a channel with a variable bottom. 
The variables in \eqref{eq:SW} are nondimensional and unscaled; $x\in\mathbb R$ 
and $t\geq 0$ are proportional to position along the channel and time, 
respectively. With the depth variable $z$ taken to be positive upwards, the 
function $\eta=\eta(x,t)$ is proportional to the elevation of the free surface 
from a level of rest corresponding to $z=0$ and $u=u(x,t)$ is proportional to 
the horizontal velocity of the fluid at the free surface. The bottom of the 
channel is defined by the function $z=-\beta(x)$; it will be assumed that 
$\beta(x)>0$, $x\in \mathbb R$, and that the water depth $\eta(x,t)+\beta(x)$ is 
positive for all $x$, $t$. It should be noted that there are several equivalent 
formulations of the system represented by \eqref{eq:SW}, some of which will be 
considered in section \ref{sec:3} of the paper. 

It is well known that given smooth initial conditions $\eta(x,0)=\eta^0(x)$, 
$u(x,0)=u^0(x)$, $x\in\mathbb R$, and smooth bottom topography, the Cauchy 
problem for \eqref{eq:SW} has smooth solutions, in general only locally in $t$. 
In this paper we will be concerned with numerical approximations of 
\eqref{eq:SW} and suppose that its solution is sufficiently smooth so that the 
error estimates of section \ref{sec:2} hold. We will specifically consider three 
initial-boundary-value problems (ibvp's) for \eqref{eq:SW}, posed on the 
spatial interval $[0,1]$: A simple ibvp with vanishing fluid velocity at the 
endpoints and two ibvp's with transparent (characteristic) boundary conditions, 
in the supercitical and subcritical flow cases, respectively. For these types of 
ibvp's there exists a well-posedness theory locally in $t$, cf.\ e.g.\ 
\cite{PT1}, \cite{HPT}, \cite{PT2}. For the formulation and numerical solution 
of ibvp's with transparent boundary conditions see also \cite{Shiue}, 
\cite{Nyc}. In section \ref{sec:2} we will specify in detail these ibvp's and 
summarize their well-posedness theory. 

The literature on the numerical solution of the shallow water equations is vast. 
We will just mention that in recent years there has been considerable interest 
in solving them numerically by Discontinuous Galerkin finite element methods and 
refer the reader to \cite{XZS} and the recent surveys \cite{QZ}, \cite{X}, for an 
overview of issues related to the implementation of such methods in the presence 
of discontinuities and also in two space dimensions. 

In section \ref{sec:2} of the paper we consider the ibvp's previously mentioned, 
discretize them in space by the standard Galerkin finite element method, and 
prove $L^2$-error estimates for the semidiscrete approximations assuming smooth 
solutions of the equations and extending results of \cite{AD1}, \cite{AD2}, to the 
variable bottom case. In section \ref{sec:3} we discretize the semidiscrete 
problem in the temporal variable using he classical fourth-order accurate, 
four-stage explicit Runge-Kutta method. The resulting fully discrete scheme is 
stable under a Courant number stability condition and its convergence has been 
analyzed for \eqref{eq:SW} in the case of a horizontal bottom in \cite{ADK}. We 
use this scheme in a series of numerical experiments simulating shallow water 
wave propagation over variable bottom topography and in the presence of 
absorbing (characteristic) boundary conditions up to the attainment of 
steady-state solutions. We also discuss issues of good balance, cf.\ \cite{BV}, 
\cite{XZS}, of the standard Galerkin method applied to the shallow water 
equations written in balance-law form. 

In the sequel we denote, for integer $m\geq 0$, by $H^m=H^m(0,1)$ the usual 
$L^2$-based real Sobolev spaces of order $m$, and by $\|\cdot\|_m$ their norm. 
The space $H_0^1=H_0^1(0,1)$ will consist of the $H^1$ functions that vanish at 
$x=0,1$. The inner product and norm on $L^2=L^2(0,1)$ will be denoted by 
$(\cdot,\cdot)$, $\|\cdot\|$, respectively, while $C^m$ will be the $m$ times 
continuously differentiable functions on $[0,1]$ The norms of $L^\infty$ and of 
the $L^\infty$-based Sobolev space $W^{1,\infty}$ on $(0,1)$ will be denoted by 
$\|\cdot\|_\infty$, $\|\cdot\|_{1,\infty}$, respectively. $\mathbb P_r$ will be 
the space of polynomials of degree at most $r$.

\section{Initial-boundary-value problems and error estimates} 
\label{sec:2} 

In this section we will specify the initial-boundary-value problems (ibvp's) 
for the shallow water equations to be analyzed numerically, their 
Galerkin-finite element space discretizations and the properties of the 
attendant finite element spaces. We will then prove $L^2$-error estimates for 
these discretizations assuming that the data and the solutions of the ibvp's are 
smooth enough for the purposes  of the error estimation.

\subsection{Semidiscretization of a simple ibvp with vanishing fluid velocity at 
the endpoints} 
\label{sec:2p1} 

We consider first a simple ibvp for \eqref{eq:SW} posed in the finite channel 
$[0,1]$. let $T>0$ be given. We seek $\eta=\eta(x,t)$, $u=u(x,t)$, for $0\leq 
x\leq 1$, $0\leq t\leq T$, satisfying 
\begin{align} 
& \begin{aligned} 
& \eta_t + (\eta u)_x + (\beta u)_x = 0, \\ 
& u_t + \eta_x + uu_x = 0, 
\end{aligned}\quad 
0\leq x\leq 1,\ \ 0\leq t\leq T, \label{eq:2p1} \\ 
& \eta(x,0) = \eta^0(x),\ \ u(x,0)=u^0(x),\ \ 0\leq x\leq 1 \notag \\ 
& u(0,t) = u(1,t) = 0,\ \ 0\leq t\leq T. \notag 
\end{align} 

In \cite{PT1} Petcu and Temam, using an equivalent form of \eqref{eq:2p1}, 
established the existence-uniqueness of solutions $(\eta,u)$ of \eqref{eq:2p1} 
in $H^2\times H^2\cap H_0^1$ for some $T=T(\|\eta^0\|_2$, $\|u^0\|_2$) under the 
hypotheses that $\eta^0\in H^2$, and, say, $\beta\in H^2$, such that $\eta^0(x) 
+ \beta(x)>0$, $x\in [0,1]$, and $u^0\in H^2\cap H_0^1$. Moreover, it holds that 
$\eta(x,t)+\beta(x)>0$ for $(x,t)\in [0,1]\times [0,T]$, i.e.\ the water depth 
is always positive. (This property will be assumed in all the error estimates to 
follow in addition to the sufficient smoothness of $\eta$ and $u$.) 

In order to solve \eqref{eq:2p1} numerically let $0=x_1< x_2<\ldots <x_{N+1}=1$ 
be a quasiuniform partition of $[0,1]$ with $h:= \max_i(x_{i+1}-x_i)$, and for 
integers $k$, $r$ such that $r\geq 2$, $0\leq k\leq r-2$, consider the finite 
element spaces $S_h=\{\varphi\in C^k:\ \varphi\big|_{[x_j,x_{j+1}]}\in \mathbb 
P_{r-1},\ 1\leq j\leq N\}$ and $S_{h,0}=\left\{\varphi\in S_h: 
\varphi(0)=\varphi(1)=0\right\}$. It is well known that given $w\in H^r$, there 
exists $\chi\in S_h$ such that 
\begin{subequations} \label{eq:2p2} 
\begin{equation} \label{eq:2p2a} 
\|w-\chi\| + h \|w'-\chi'\| \leq C h^r \|w^{(r)}\|, 
\end{equation} 
and, in addition, if $r\geq 3$, such that 
\begin{equation} \label{eq:2p2b} 
\|w-\chi\|_2 \leq C h^{r-2} \|w^{(r)}\|, 
\end{equation} 
\end{subequations} 
where $C$ is a constant independent of $h$ and $w$; a similar property holds in 
$S_{h,0}$ provided $w\in H^r\cap H_0^1$. It follows from \eqref{eq:2p2a}, cf.\ 
\cite{DDW}, that if $\opP$ is the $L^2$-projection operator onto $S_h$, then 
\begin{subequations} \label{eq:2p3}
\begin{align} 
& \|\opP w\|_1 \leq C \|w\|_1,\quad \forall w\in H^1, \label{eq:2p3a} \\ 
& \|\opP w\|_\infty \leq C \|w\|_\infty,\quad \forall w\in C^0, 
\label{eq:2p3b} \\ 
& \|\opP w - w\|_{L^\infty} \leq C h^r \|w^{(r)}\|_\infty,\quad \forall w\in 
C^r, \label{eq:2p3c} 
\end{align} 
\end{subequations} 
and that the analogous properties also hold for $\opPz$, the $L^2$-projection 
operator onto $S_{h,0}$. In addition, as a consequence of the quasiuniformity of 
the mesh, the inverse properties 
\begin{equation} \label{eq:2p4} 
\|\chi\|_1\leq C h^{-1}\|\chi\|, \quad \|\chi\|_{j,\infty}\leq C 
h^{-(j+1/2)}\|\chi\|,\quad j=0,1, 
\end{equation} 
hold for $\chi\in S_h$ or $\chi\in S_{h,0}$. 

The standard Galerkin semidiscretization of \eqref{eq:2p1} is defined as 
follows: Seek $\eta_h: [0,T]\to S_h$, $u_h: [0,T]\to S_{h,0}$, such that for 
$t\in [0,T]$ 
\begin{align} \label{eq:2p5} 
\begin{aligned} 
& (\eta_{ht},\varphi) + ((\eta_hu_h)_x,\varphi) + ((\beta u_h)_x,\varphi) = 
0,\quad \forall\varphi\in S_h, \\ 
& (u_{ht},\chi) + (\eta_{hx},\chi) + (u_hu_{hx},\chi) = 0,\quad \forall\chi\in 
S_{h,0}, 
\end{aligned} 
\end{align} 
with initial conditions 
\begin{equation} \label{eq:2p6} 
\eta_h(0) = \opP\eta_0,\quad u_h(0) = \opPz u_0. 
\end{equation} 

We will prove below that the semidiscrete approximations $(\eta_h,u_h)$ 
satisfy an $L^2$-error bound of $\mathcal O(h^{r-1})$. Is is well known that 
this order of accuracy cannot be improved in the case of the standard Galerkin 
finite element method for first-order hyperbolic problems in the presence of 
general nonuniform meshes, \cite{D}, \cite{AD1}; for uniform meshes better 
results are possible, cf.\ \cite{AD1} and the numerical experiments of section 
\ref{sec:3}. 

\begin{proposition} \label{prop:2p1} 
Let $(\eta,u)$ be the solution of \eqref{eq:2p1}, assumed to be sufficiently smooth 
and satisfying $\beta+\eta>0$ in $[0,1]\times[0,T]$, where $\beta\in C^1$, 
$\beta>0$. Let $r\geq 3$ and $h$ be sufficiently small. Then, the semidiscrete 
ivp \eqref{eq:2p5}--\eqref{eq:2p6} has a unique solution $(\eta_h,u_h)$ for 
$t\in [0,T]$, such that 
\begin{equation} \label{eq:2p7} 
\max_{0\leq t\leq T} \left(\|\eta-\eta_h\| + \|u-u_h\|\right) \leq C h^{r-1}, 
\end{equation} 
where, here and in the sequel, $C$ will denote a generic constant independent of 
$h$. 
\end{proposition}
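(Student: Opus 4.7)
My plan is to follow the standard template for Galerkin error analysis of nonlinear first-order hyperbolic systems: split the error using the $L^2$-projections, derive an energy identity for the projected parts, and close the estimate by Gronwall combined with a bootstrap/continuation argument controlling $u_h$ in $W^{1,\infty}$. Since \eqref{eq:2p5}--\eqref{eq:2p6} is an initial-value problem for a system of ODEs in $S_h \times S_{h,0}$ with polynomial right-hand side in the nodal coefficients, Picard--Lindel\"of yields a unique solution on some maximal subinterval $[0, t_h) \subseteq [0, T]$; global existence up to $T$ will follow a posteriori from the error bound. I decompose the error as $\eta - \eta_h = \rho + \theta$ and $u - u_h = \sigma + \xi$, where $\rho := \eta - \opP\eta$, $\theta := \opP\eta - \eta_h \in S_h$, $\sigma := u - \opPz u$, and $\xi := \opPz u - u_h \in S_{h,0}$. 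By \eqref{eq:2p2}, the projection errors $\rho$, $\sigma$ and their time derivatives are $O(h^r)$ in $L^2$; it therefore suffices to show that $\|\theta\|$ and $\|\xi\|$ are $O(h^{r-1})$ uniformly on $[0, T]$.

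Subtracting \eqref{eq:2p5} from the $L^2$-pairing of \eqref{eq:2p1} against $\varphi \in S_h$ and $\chi \in S_{h,0}$, and using $(\rho_t, \varphi) = (\sigma_t, \chi) = 0$, I get the error equations
\[
(\theta_t, \varphi) + ((\eta u - \eta_h u_h)_x, \varphi) + ((\beta(u - u_h))_x, \varphi) = 0, \quad (\xi_t, \chi) + ((\eta - \eta_h)_x, \chi) + (u u_x - u_h u_{hx}, \chi) = 0.
\]
Setting $\varphi = \theta$, $\chi = \xi$, splitting $\eta u - \eta_h u_h = u(\rho + \theta) + \eta_h(\sigma + \xi)$ and $u u_x - u_h u_{hx} = u(\sigma + \xi)_x + (\sigma + \xi) u_{hx}$, and integrating by parts on the derivative terms produces a combined energy identity for $\tfrac{1}{2}\tfrac{d}{dt}(\|\theta\|^2 + \|\xi\|^2)$. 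The boundary contributions vanish because $u(0, t) = u(1, t) = 0$ and $\sigma, \xi$ both belong to $H_0^1$, and terms of the form $(u\theta, \theta_x)$ and $(u\xi, \xi_x)$ collapse to $-\tfrac{1}{2}(u_x, \theta^2)$ and $-\tfrac{1}{2}(u_x, \xi^2)$, bounded by $C\|\theta\|^2$ and $C\|\xi\|^2$ respectively.

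The key technical point is the treatment of the cross terms $((\eta_h + \beta)\xi, \theta_x)$ from the first equation and $(\theta, \xi_x)$ from the second. Because the system is not $L^2$-symmetric, these do not cancel outright; after integrating by parts (using $\xi(0, t) = \xi(1, t) = 0$) their combined effect reduces to a residual involving the factor $\eta_h + \beta - 1$, which I would control by the inverse estimate $\|\theta_x\| \leq C h^{-1} \|\theta\|$ from \eqref{eq:2p4}. It is precisely this single use of the inverse inequality that costs one power of $h$ and is the source of the $h^{r-1}$ (rather than $h^r$) rate. The remaining dangerous nonlinear terms $(\sigma u_{hx}, \xi)$ and $(\xi u_{hx}, \xi)$ require a uniform bound $\|u_{hx}\|_\infty \leq C$, which I obtain by a bootstrap: if $\|\xi\| \leq h^{r-1}$ on some interval $[0, t^*)$, then \eqref{eq:2p4} gives $\|\xi_x\|_\infty \leq C h^{-3/2}\|\xi\| \leq C h^{r-5/2}$, bounded precisely when $r \geq 3$; combined with $\|u_x\|_\infty \leq C$ and $\|(u - \opPz u)_x\|_\infty = O(h^{r-1})$ (from \eqref{eq:2p3} applied to $u_x$ after stability of $\opPz$ in $W^{1,\infty}$), this yields $\|u_{hx}\|_\infty \leq C$ on $[0, t^*)$.

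Under the bootstrap hypothesis, Cauchy--Schwarz and the approximation bounds on $\rho, \sigma$ reduce the right-hand side of the combined energy identity to $C(\|\theta\|^2 + \|\xi\|^2 + h^{2r-2})$, and Gronwall's inequality yields $\|\theta(t)\|^2 + \|\xi(t)\|^2 \leq C h^{2r-2}$ on $[0, t^*)$. For $h$ small this strictly improves the bootstrap hypothesis $\|\xi\| \leq h^{r-1}$, so continuation extends the bound to all of $[0, T]$, simultaneously establishing global existence of $(\eta_h, u_h)$ and \eqref{eq:2p7}. The main obstacle is coordinating the bootstrap on $\|u_{hx}\|_\infty$ with the loss-of-one-$h$ inverse estimate used to close the non-symmetric cross-terms; once that interplay is properly set up, the remainder of the argument is routine but bookkeeping-intensive because of the variable bottom $\beta(x)$ and the several nonlinear cross-couplings.
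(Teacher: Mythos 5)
Your setup (error splitting via the $L^2$-projections, the ODE local-existence remark, the bootstrap on $\|\xi_x\|_\infty$ with the $h^{r-5/2}$ inverse bound requiring $r\geq 3$, and the Gronwall/continuation closure) matches the paper. But the step you yourself identify as the key technical point is handled in a way that fails. Taking $\chi=\xi$ in the second error equation, the hyperbolic coupling terms combine into something of the form $\bigl(\theta_x,(\beta+\eta-1)\xi\bigr)$ (your ``residual involving the factor $\eta_h+\beta-1$''). The weight $\beta+\eta-1$ is an $O(1)$ function for a general variable bottom (and for a general $\eta$), not a small quantity, so bounding this term by the inverse estimate gives $Ch^{-1}\|\theta\|\,\|\xi\|$: an $h^{-1}$ coefficient multiplying a \emph{quadratic} expression in the unknowns $\theta,\xi$, not a known $O(h^r)$ consistency term. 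Feeding this into Gronwall produces a factor $e^{CT/h}$, which destroys any convergence rate; there is no choice of bootstrap that rescues it, and integrating by parts the other way merely moves the $h^{-1}$ onto $\|\xi_x\|$. Relatedly, your diagnosis of where the $h^{r-1}$ rate comes from is wrong: it originates from the $O(h^{r-1})$ bounds on $\|\rho_x\|$, $\|\sigma_x\|$ in the truncation terms (the standard loss for the standard Galerkin method on quasiuniform meshes), not from an inverse-inequality loss in the cross terms.

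The missing idea, which is the heart of the paper's proof, is to test the second error equation not with $\xi$ but with the weighted function $\chi=\opPz\gamma$, $\gamma:=(\beta+\eta)\xi$. Then the principal coupling terms $-(\gamma,\theta_x)$ and $(\theta_x,\gamma)$ cancel exactly, and what is left is $(\theta_x,\opPz\gamma-\gamma)$, which is harmless because the superapproximation property $\|\opPz[(\beta+\eta)\xi]-(\beta+\eta)\xi\|\leq Ch\|\xi\|$ supplies a factor $h$ that exactly offsets the $h^{-1}$ from $\|\theta_x\|\leq Ch^{-1}\|\theta\|$, yielding $C\|\theta\|\,\|\xi\|$ with no loss of rate. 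The price is that the temporal term becomes $(\xi_t,(\beta+\eta)\xi)=\tfrac12\tfrac{\mathrm d}{\mathrm dt}((\beta+\eta)\xi,\xi)-\tfrac12(\eta_t\xi,\xi)$, so one runs Gronwall on the weighted quantity $\|\theta\|^2+((\beta+\eta)\xi,\xi)$, which is equivalent to $\|\theta\|^2+\|\xi\|^2$ precisely because $\beta+\eta>0$ on $[0,1]\times[0,T]$. A similar weighted treatment (plus $\beta\in C^1$) is also needed for terms like $((u\xi)_x,\opPz\gamma)$. Without this symmetrization-by-weighted-test-function device your energy identity cannot be closed, so as written the proposal has a genuine gap at its central step.
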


\begin{proof} 
As the proof is similar to that of Proposition 2.2 in \cite{AD1}, which is valid 
in the case of horizontal bottom ($\beta(x)=1$), we will only indicate the steps 
where the two proofs differ. We let $\rho := \eta-\opP \eta$, $\theta := \opP 
\eta-\eta_h$, $\sigma := u-\opPz u$, $\xi:= \opPz u-u_h$. While the solution 
exists we have 
\begin{gather} 
\begin{multlined} \hspace{-.5em}
(\theta_t,\phi) + (\beta(\xi_x+\sigma_x),\phi) + (\beta_x(\xi+\sigma),\phi) +  
((\eta u)_x-(\eta_hu_h)_x,\phi) = 0,\\ \forall\phi\in S_h, 
\end{multlined} \label{eq:2p8} \\ 
(\xi_t,\chi) + (\theta_x+\rho_x,\chi) + (uu_x-u_hu_{hx},\chi) = 0, \quad 
\forall\chi\in S_{h,0}. \label{eq:2p9} 
\end{gather}
Taking $\phi=\theta$ in \eqref{eq:2p8} and integrating by parts we have 
\begin{multline} \label{eq:2p10} 
\tfrac{1}{2}\tfrac{\mathrm d}{\mathrm dt}\|\theta\|^2 + 
([(\beta+\eta)\xi]_x,\theta) = 
- (\beta\sigma_x,\theta) - (\beta_x\sigma,\theta) - ((\eta\sigma)_x,\theta) - 
((u\rho)_x,\theta)\\ - ((u\theta)_x,\theta) + ((\rho\sigma)_x,\theta) + 
((\theta\sigma)_x,\theta) + ((\rho\xi)_x,\theta) + ((\theta\xi)_x,\theta).  
\end{multline} 
In view of \eqref{eq:2p6}, we conclude by continuity that there exists a maximal 
temporal instance $t_h>0$ such that $(\eta_h,u_h)$ exist and 
$\|\xi_x\|_\infty\leq 1$ for $t\leq t_h$. Suppose that $t_h<T$. Using the 
approximation and inverse properties of $S_h$ and $S_{h,0}$, we may then 
estimate the various terms in the r.h.s.\ of \eqref{eq:2p10} for $t\in [0,t_h]$ 
in a similar way as in \cite{AD1}, since $\beta\in C^1$, and conclude that for 
$t\in [0,t_h]$ 
\begin{equation} \label{eq:2p11} 
\tfrac{1}{2}\tfrac{\mathrm d}{\mathrm dt}\|\theta\|^2 - (\gamma,\theta_x) \leq 
C(h^{r-1}\|\theta\| + \|\theta\|^2 + \|\xi\|^2), 
\end{equation} 
where we have put $\gamma:= (\beta+\eta)\xi$. 

We turn now to \eqref{eq:2p9} in which we take $\chi = \opPz\gamma = 
\opPz[(\beta+\eta)\xi]$. For $0\leq t\leq t_h$ it follows that 
\begin{multline} \label{eq:2p12} 
(\xi_t,\gamma) + (\theta_x,\opPz\gamma) = -(\rho_x,\opPz\gamma) - 
((u\xi)_x,\opPz\gamma) - ((u\sigma)_x,\opPz\gamma) \\ 
+ ((\sigma\xi)_x,\opPz\gamma) + (\sigma\sigma_x,\opPz\gamma) + 
(\xi\xi_x,\opPz\gamma). 
\end{multline} 
Arguing now as in \cite{AD1}, since $\beta\in C^1$, noting that 
\[ 
((u\xi)_x,\opPz\gamma) = ((u\xi)_x,\opPz\gamma-\gamma) + (u_x(\beta+\eta),\xi^2) 
- \tfrac 1 2 ([\beta+\eta)u_x],\xi^2), 
\] 
and using a well-known \emph{superapproximation} property of $S_{h,0}$ to 
estimate the term $\opPz\gamma-\gamma$: 
\[
\|\opPz\gamma-\gamma\| = \|\opPz[(\beta+\eta)\xi] - (\beta+\eta)\xi\| \leq C h 
\|\xi\|, 
\]
we get 
\[ 
|((u\xi)_x,\opPz\gamma)| \leq C h \|\xi\|_1 \|\xi\| + C \|\xi\|^2 \leq C 
\|\xi\|^2. 
\] 
With similar estimates as in \cite{AD1}, using the hypothesis that 
$\|\xi_x\|_\infty\leq 1$ for $0\leq t\leq t_h$, we conclude from this inequality 
and \eqref{eq:2p12} that for $0\leq t\leq t_h$ 
\begin{equation} \label{eq:2p13} 
(\xi_t,(\beta+\eta)\xi) + (\theta_x,\opPz\gamma) \leq C(h^{r-1}\|\xi\| + 
\|\xi\|^2). 
\end{equation} 
Adding now \eqref{eq:2p12} and \eqref{eq:2p13} we obtain 
\[ 
\tfrac{1}{2}\tfrac{\mathrm d}{\mathrm dt}\|\theta\|^2 + (\xi_t,(\beta+\eta)\xi) 
+ (\theta_x,\opPz\gamma-\gamma) \leq C[h^{r-1}(\|\theta\|+\|\xi\|) + 
\|\theta\|^2 + \|\xi\|^2]. 
\] 
But, since $\beta=\beta(x)$, we have $(\xi_t,(\beta+\eta)\xi) = 
\tfrac{1}{2}\tfrac{\mathrm d}{\mathrm dt}((\beta+\eta)\xi,\xi) - 
\tfrac{1}{2}(\eta_t\xi,\xi)$. Therefore, for $0\leq t\leq t_h$ 
\[ 
\tfrac{1}{2}\tfrac{\mathrm d}{\mathrm dt}[\|\theta\|^2 + ((\beta+\eta)\xi,\xi)] 
\leq C[h^{r-1}(\|\theta\| + \|\xi\|) + \|\theta\|^2 + \|\xi\|^2], 
\] 
for a constant $C$ independent of $h$ and $t_h$. Since $\beta+\eta > 0$, the 
norm $((\beta+\eta)\,\cdot,\cdot)^{1/2}$ is equivalent to that of $L^2$ 
uniformly for $t\in [0,T]$. Hence, Gronwall's inequality and \eqref{eq:2p6} 
yield for a constant $C=C(T)$ 
\begin{equation} \label{eq:2p14} 
\|\theta\| + \|\xi\| \leq C h^{r-1}\quad \text{for}\quad 0\leq t\leq t_h. 
\end{equation} 
We conclude from \eqref{eq:2p14}, using inverse properties, that 
$\|\xi_x\|_\infty \leq C h^{r-5/2}$ for $0\leq t\leq t_h$, and, since $r\geq 3$, 
if $h$ is taken sufficiently small, we see that $t_h$ is not maximal. Hence we 
may take $t_h=T$ and \eqref{eq:2p7} follows from \eqref{eq:2p14}. 
\end{proof} 
The hypothesis 
that $r\geq 3$ seems to be technical, as numerical experiments indicate that 
\eqref{eq:2p7} apparently holds for $r=2$ as well, cf.\ \cite{AD1}.

\subsection{Semidiscretization of an ibvp with absorbing (characteristic) 
boundary conditions in the supercritical case} 
\label{sec:2p2} 

We consider now the shallow water equations with variable bottom with 
transparent (characteristic) boundary conditions. First we examine the 
\emph{supercritical} case: For $(x,t)\in [0,1]\times [0,T]$ we seek 
$\eta=\eta(x,t)$ and $u=u(x,t)$ satisfying the ibvp
\begin{align} \label{eq:2p15} 
& \begin{aligned} 
& \eta_t + (\beta u)_x + (\eta u)_x = 0,\\ 
& u_t + \eta_x + uu_x = 0, 
\end{aligned}\quad  
0\leq x\leq 1,\quad 0\leq t\leq T, \\ 
& \eta(x,0) = \eta^0(x),\quad u(x,0)=u^0(x),\quad 0\leq x\leq 1, \notag \\ 
& \eta(0,t)=\eta_0,\quad u(0,t)=u_0,\quad 0\leq t\leq T, \notag 
\end{align} 
where $\beta\in C^1$, $\eta^0$, $u^0$ are given functions on $[0,1]$ and 
$\eta_0$, $u_0$ constants such that $\beta(x)+\eta_0>0$, $u_0>0$, 
$u_0>\sqrt{\beta(x)+\eta_0}$, $x\in [0,1]$. 

The ibvp \eqref{eq:2p15} was studied by Huag et al., \cite{HPT}, in the more 
general case of the presence of a lateral component of the horizontal velocity 
depending on $x$ only (nonzero Coriolis parameter). In the simpler case of 
\eqref{eq:2p15}, we assume that $(\eta_0,u_0)$ is a suitable constant solution 
of \eqref{eq:2p15} and that $\eta^0(x)$, $u^0(x)$ are sufficiently smooth 
initial conditions close to $(\eta_0,u_0)$ and satisfying appropriate 
compatibility relations at $x=0$. Then, as is proved in \cite{HPT}, given 
positive constants $c_0$, $\alpha_0$, $\underline{\zeta}_0$, and 
$\overline{\zeta}_0$, there exists a $T>0$ and a sufficiently smooth solution 
$(\eta,u)$ of \eqref{eq:2p15} satisfying for $(x,t)\in [0,1]\times [0,T]$ the 
strong supercriticality properties 
\begin{subequations} 
\begin{align} 
& u^2 - (\beta+\eta) \geq c_0^2, \label{eq:2p16a} \\ 
& u \geq \alpha_0, \label{eq:2p16b} \\ 
& \underline{\zeta}_0 \leq (\beta+\eta)\leq \overline{\zeta}_0. \label{eq:2p16c}
\end{align} 
\end{subequations}

For the purposes of the error estimation to follow we will assume in addition 
that the solution of \eqref{eq:2p15} satisfies a strengthened supercriticality 
condition of the following form: There exist positive constants $a$, and $b$, 
such that for $(x,t)\in [0,1]\times [0,T]$ 
\begin{subequations}
\begin{align} 
& \beta + \eta \geq b, \label{eq:2p17a} \\ 
& u \geq 2a, \label{eq:2p17b} \\ 
& \beta+\eta \leq (u-a)(u-\tfrac{2a}{3}). \label{eq:2p17c} 
\end{align} 
\end{subequations} 

Obviously \eqref{eq:2p17a} and \eqref{eq:2p17b} imply that $u\geq 
\sqrt{\beta+\eta}$. It is not hard to see that \eqref{eq:2p17c} follows from 
\eqref{eq:2p16a}--\eqref{eq:2p16c} if e.g.\ $\alpha_0$ is taken sufficiently 
small and $c_0$ sufficiently large. We also remark here that in the error 
estimates to follow \eqref{eq:2p17c} will be needed only at $x=1$ for $t\in 
[0,T]$. 

We will approximate the solution of \eqref{eq:2p15} in a slightly transformed 
form. We let $\tilde \eta = \eta-\eta_0$, $\tilde u = u-u_0$ and rewrite 
\eqref{eq:2p15} as an ibvp for $\tilde \eta$ and $\tilde u$ with homogeneous 
boundary conditions. Dropping the tildes we obtain the system 
\begin{align} \label{eq:2p18}
& \begin{aligned} 
& \eta_t + u_0\eta_x + (\beta+\eta_0)u_x + (\eta u)_x + (u+u_0)\beta_x = 0, \\ 
& u_t + \eta_x + u_0u_x + uu_x = 0, 
\end{aligned}\quad  
0\leq x\leq 1,\quad 0\leq t\leq T, \\ 
& \eta(x,0) = \eta^0(x)-\eta_0,\quad u(x,0)=u^0(x)-u_0,\quad 0\leq x\leq 1, 
\notag \\ 
& \eta(0,t)=0,\quad u(0,t)=0,\quad 0\leq t\leq T. \notag 
\end{align} 
In terms of the new variables \eqref{eq:2p17a}--\eqref{eq:2p17c} become 
\begin{subequations} 
\begin{align} 
& \beta + \eta + \eta_0 \geq b, \label{eq:2p19a} \\ 
& u + u_0 \geq 2a, \label{eq:2p19b} \\ 
& \beta + \eta + \eta_0 \leq (u+u_0-a)(u+u_0-\tfrac{2a}{3}). \label{eq:2p19c}
\end{align} 
\end{subequations} 

In the rest of this subsection, for integer $k\geq 0$, let $\zr{C}^k = \{ v\in 
C^k[0,1]: v(0)=0\}$, and $\zr{H}^{k+1} = \{v\in H^{k+1}(0,1): v(0)=0\}$. Using 
the hypotheses of section \ref{sec:2p1} on the finite element space 
discretization we define $\zr{S}_h = \{\phi\in\zr{C}^{r-2}: 
\phi\big|_{[x_j,x_{j+1}]}\in\mathbb P_{r-1}, 1\leq j\leq N\}$ and $\opPZ$ the 
$L^2$ projection operator onto $\zr{S}_h$. Note that 
\eqref{eq:2p2}--\eqref{eq:2p4} also hold on $\zr{S}_h$ \emph{mutatis mutandis}. 

The standard Galerkin semidiscretization of \eqref{eq:2p18} is defined as 
follows: We seek $\eta_h,\ u_h,\ : [0,T]\to \zr{S}_h$ such that for $0\leq t\leq 
T$ 
\begin{align} 
\begin{multlined} 
(\eta_{ht},\phi) + (u_0\eta_{hx},\phi) + ((\beta+\eta_0)u_{hx},\phi) + ((\eta_h 
u_h)_x,\phi) + ((u_h+u_0)\beta_x,\phi) = 0,\hspace{-1em}\\ 
 \forall \phi\in\zr{S}_h, 
\end{multlined} \label{eq:2p20} \\ 
(u_{ht},\phi) + (\eta_{hx},\phi) + (u_0u_{hx},\phi) + (u_hu_{hx},\phi) = 0,\quad 
\forall \phi\in\zr{S}_h, \label{eq:2p21} 
\end{align}
with
\begin{equation} \label{eq:2p22} 
\eta_h(0) = \opPZ(\eta^0(\cdot) - \eta_0),\quad u_h(0) = \opPZ(u^0(\cdot)-u_0). 
\end{equation} 

The boundary conditions implied by the choice of $\zr{S}_h$ are no longer 
exactly transparent, but they are highly absorbing as will be seen in the 
numerical experiments of Section \ref{sec:3}. 

\begin{proposition} \label{prop:2p2} 
Let $(\eta,u)$ be the solution of \eqref{eq:2p18}, and assume that the 
hypotheses \eqref{eq:2p19a}--\eqref{eq:2p19c} hold, that $r\geq 3$, and $h$ is 
sufficiently small. Then the semidiscrete ivp \eqref{eq:2p20}--\eqref{eq:2p22} 
has a unique solution $(\eta_h,u_h)$ for $0\leq t\leq T$ satisfying 
\begin{equation} \label{eq:2p23} 
\max_{0\leq t\leq T}(\|\eta(t)-\eta_h(t)\| + \|u(t)-u_h(t)\|)\leq C h^{r-1}. 
\end{equation} 
\end{proposition}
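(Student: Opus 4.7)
The plan is to mirror the energy argument used in the proof of Proposition~\ref{prop:2p1}, with the same decomposition $\rho:=\eta-\opPZ\eta$, $\theta:=\opPZ\eta-\eta_h$, $\sigma:=u-\opPZ u$, $\xi:=\opPZ u-u_h$, now taking values in $\zr{S}_h$. Subtracting \eqref{eq:2p20}--\eqref{eq:2p21} from the weak form of \eqref{eq:2p18} gives error equations for $\theta,\xi$ structurally identical to \eqref{eq:2p8}--\eqref{eq:2p9}, except for two new linear drift contributions $u_0(\theta_x,\phi)$ and $u_0(\xi_x,\phi)$ coming from the convective terms in \eqref{eq:2p18}. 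I will run the same bootstrap device as in Proposition~\ref{prop:2p1}: let $t_h$ be maximal with $\eta_h,u_h$ existing and $\|\xi_x\|_\infty\le 1$ on $[0,t_h]$, assume $t_h<T$, establish an $\mathcal O(h^{r-1})$ bound on $[0,t_h]$, and close via \eqref{eq:2p4} together with $r\ge 3$.

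To derive the energy estimate I test the $\theta$-equation with $\phi=\theta$, and the $\xi$-equation with $\chi=\opPZ\gamma$ where $\gamma:=(\beta+\eta+\eta_0)\xi$. Grouping $((\beta+\eta_0)\xi_x,\theta)$ with the $(\eta\xi)_x$-piece of the expanded nonlinear flux reproduces $(\gamma_x,\theta)$ exactly as in Proposition~\ref{prop:2p1}, and the residual $(\beta_x\xi,\theta)$ cancels against the $\beta_x$ contribution arising from $((u+u_0)\beta_x,\theta)$. The essential new feature is that integrations by parts no longer annihilate the boundary values at $x=1$ (those at $x=0$ still vanish because $\theta,\xi,\opPZ\gamma\in\zr{S}_h$); the surviving terms at $x=1$ recombine, modulo the superapproximation remainder $(\theta_x,\opPZ\gamma-\gamma)$ absorbed via $\|\opPZ\gamma-\gamma\|\le Ch\|\xi\|$, into
\begin{equation*}
B(t)=\tfrac12\bigl[(u+u_0)\theta^2+2(\beta+\eta+\eta_0)\,\theta\xi+(u+u_0)(\beta+\eta+\eta_0)\,\xi^2\bigr]\Big|_{x=1}.
\end{equation*}
Its discriminant $(\beta+\eta+\eta_0)\bigl[(\beta+\eta+\eta_0)-(u+u_0)^2\bigr]$ is rendered strictly negative by \eqref{eq:2p19c}: expanding that inequality and using \eqref{eq:2p19b} yields $(u+u_0)^2-(\beta+\eta+\eta_0)\ge\tfrac{8a^2}{3}$, while the leading coefficient satisfies $(u+u_0)/2\ge a$. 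Consequently $B(t)\ge c\bigl[\theta(1,t)^2+\xi(1,t)^2\bigr]$, and the whole boundary contribution may be dropped to the left-hand side with the correct sign.

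The volume terms are then controlled exactly as in Proposition~\ref{prop:2p1}: the superapproximation estimate absorbs the factor $\|\xi\|_1$ appearing in inner products like that of $((u+u_0)\xi)_x$ against $\opPZ\gamma-\gamma$; the identity $(\xi_t,(\beta+\eta+\eta_0)\xi)=\tfrac12\tfrac{d}{dt}((\beta+\eta+\eta_0)\xi,\xi)-\tfrac12(\eta_t\xi,\xi)$, valid since $\beta=\beta(x)$, turns the remaining $\xi_t$ contribution into a time derivative of a norm equivalent to $\|\cdot\|^2$ by \eqref{eq:2p19a}; and the $\rho,\sigma$ pieces are absorbed into $\mathcal O(h^{r-1})$ remainders via \eqref{eq:2p2}--\eqref{eq:2p3}. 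Gronwall's inequality applied to $\|\theta\|^2+((\beta+\eta+\eta_0)\xi,\xi)$ together with \eqref{eq:2p22} yields the $h^{r-1}$ bound on $[0,t_h]$, and the inverse inequality then rules out $t_h<T$, as in the previous proof.

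The main obstacle is the careful book-keeping needed to isolate precisely the symmetric boundary form $B(t)$: one must organize the integrations by parts in the two tested equations so that the cross contributions $(\gamma_x,\theta)+(\theta_x,\opPZ\gamma)$ recombine into $\gamma(1,t)\theta(1,t)$ modulo a controllable superapproximation remainder, and then verify that the specific quadratic relation encoded in \eqref{eq:2p19c} is exactly what is required to make $B(t)$ pointwise coercive. This is the only place in the argument where the strengthened supercriticality enters, which explains the authors' remark that it suffices to impose \eqref{eq:2p17c} at $x=1$.
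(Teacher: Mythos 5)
Your strategy is the same as the paper's (same error splitting, same test functions $\theta$ and $\opPZ\gamma$ with $\gamma=(\beta+\eta_0+\eta)\xi$, same bootstrap--Gronwall closure), but there is a genuine gap in the boundary bookkeeping. When you expand $(\eta u)_x-(\eta_hu_h)_x$ and $uu_x-u_hu_{hx}$, the error equations contain the quadratic terms $((\theta\xi)_x,\phi)$ and $(\xi\xi_x,\phi)$, and after testing with $\theta$ and $\opPZ\gamma$ and integrating by parts these contribute \emph{cubic} boundary terms $-\tfrac12\xi(1)\theta^2(1)$ and $-\tfrac13\bigl(\beta(1)+\eta_0+\eta(1)\bigr)\xi^3(1)$ in addition to your quadratic form $B(t)$; this is the quantity $\omega$ in \eqref{eq:2p34} of the paper. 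Your $B(t)$ omits them, and they cannot simply be dumped into the Gronwall right-hand side, since pointwise boundary values are not controlled by $\|\theta\|$, $\|\xi\|$ without losing powers of $h$ through inverse inequalities.

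The paper handles exactly this point by running the bootstrap with the threshold $\|\xi_x\|_\infty\le a$ (so $|\xi(1)|\le a$, as $\xi(0)=0$) and absorbing the cubic terms into the quadratic form, which lowers the diagonal coefficients to $u_0+u(1)-a$ and $u_0+u(1)-\tfrac{2a}{3}$; positive semidefiniteness of the resulting $2\times2$ matrix is then \emph{precisely} condition \eqref{eq:2p19c}. You instead keep the threshold $\|\xi_x\|_\infty\le 1$ copied from Proposition \ref{prop:2p1} and use \eqref{eq:2p19c} only to extract the margin $(u+u_0)^2-(\beta+\eta+\eta_0)\ge\tfrac{8a^2}{3}$ for the purely quadratic part --- but for that part plain supercriticality would already suffice, and with threshold $1$ the absorption of the cubic terms would require $\beta+\eta+\eta_0\le(u+u_0-1)(u+u_0-\tfrac23)$, which the hypotheses do not give unless $a\ge1$. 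So your explanation of why \eqref{eq:2p19c} has its specific form is not the operative one, and as written the claimed coercivity of $B(t)$ does not control the full boundary contribution. The fix is either to adopt the paper's threshold $a$ and the reduced coefficients, or to take a bootstrap threshold small enough relative to the coercivity constant of the quadratic form; with that repair the rest of your argument (superapproximation bound $\|\opPZ\gamma-\gamma\|\le Ch\|\xi\|$, the identity turning $(\xi_t,(\beta+\eta_0+\eta)\xi)$ into a time derivative, Gronwall, and the inverse-inequality closure using $r\ge3$) matches the paper.
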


\begin{proof} 
Let $\rho=\eta-\opPZ\eta$, $\theta=\opPZ\eta-\eta_h$, $\sigma=u-\opPZ u$, 
$\xi=\opPZ u-u_h$. After choosing a basis for $\zr{S}_h$, it is straightforward 
to see that the semidiscrete problem represents an ivp for an ode system which 
has a unique solution locally in time. While this solution exists, it follows 
from \eqref{eq:2p20}--\eqref{eq:2p22} and the pde's in \eqref{eq:2p18}, that 
\begin{align*} 
\begin{multlined}
(\theta_t,\phi) + (u_0(\rho_x+\theta_x),\phi) + 
((\beta+\eta_0)(\sigma_x+\xi_x),\phi) + ((\eta u-\eta_hu_h)_x,\phi) + \\ 
((\sigma+\xi)\beta_x,\phi) = 0,\quad \forall \phi\in \zr{S}_h, 
\end{multlined} \\ 
(\xi_t,\phi) + (\rho_x+\theta_x,\phi) + (u_0(\sigma_x+\xi_x),\phi) + 
(uu_x-u_hu_{hx},\phi) = 0,\quad \forall\phi\in \zr{S}_h 
\end{align*}
Proceeding as in the proof of Proposition 2.1 of \cite{AD2}, which is valid for 
a horizontal bottom, we obtain from the above in the case of variable bottom 
that 
\begin{align} 
(\theta_t,\phi) + (u_0\theta_x,\phi) + (\gamma_x,\phi) + ((u\theta)_x,\phi) - 
((\theta\xi)_x,\phi) &= -(R_1,\phi), \quad \forall\phi\in\zr{S}_h, 
\label{eq:2p24} \\
(\xi_t,\phi) + (\theta_x,\phi) + (u_0\xi_x,\phi) + ((u\xi)_x,\phi) - 
(\xi\xi_x,\phi) &= -(R_2,\phi),\quad \forall\phi\in\zr{S}_h, \label{eq:2p25}  
\end{align} 
where $\gamma = (\beta+\eta_0+\eta)\xi$ and  
\begin{align} 
& R_1 = u_0\rho_x + (\beta+\eta_0)\sigma_x + \sigma\beta_x +  (\eta\sigma)_x + 
(u\rho)_x- (\rho\sigma)_x - (\rho\xi)_x - (\theta\sigma)_x, \label{eq:2p26} \\ 
& R_2 = \rho_x + u_0\sigma_x + (u\sigma)_x - (\sigma\xi)_x - \sigma\sigma_x. 
\label{eq:2p27} 
\end{align} 
Putting $\phi=\theta$ in \eqref{eq:2p24}, using integration by parts, and 
suppressing the dependence on $t$ we have 
\begin{multline} \label{eq:2p28} 
\tfrac 1 2 \tfrac{\mathrm d}{\mathrm dt}\|\theta\|^2 - (\gamma,\theta_x) + 
\tfrac 1 2 (u_0 + u(1))\theta^2(1) + (\beta(1)+\eta_0+\eta(1))\xi(1)\theta(1) \\ 
- \tfrac 1 2 \xi(1)\theta^2(1) = -\tfrac 1 2 (u_x\theta,\theta) + \tfrac 1 2 
(\xi_x\theta,\theta) - (R_1,\theta) 
\end{multline} 
Take now $\phi = \opPZ \gamma = \opPZ[(\beta+\eta_0+\eta)\xi]$ in 
\eqref{eq:2p25} and get 
\begin{equation} \label{eq:2p29} 
(\xi_t,\gamma) + (\theta_x,\gamma) + (u_0\xi_x,\gamma) + ((u\xi)_x,\gamma) - 
(\xi\xi_x,\gamma) = -(R_3,\opPZ\gamma-\gamma) - (R_2,\opPZ\gamma), 
\end{equation} 
where 
\begin{equation} \label{eq:2p30} 
R_3 = \theta_x + u_0\xi_x + (u\xi)_x - \xi\xi_x. 
\end{equation} 
Integration by parts in various terms in \eqref{eq:2p29} gives 
\begin{multline} \label{eq:2p31} 
(\xi_t,\gamma) + (\theta_x,\gamma) + 
\tfrac{1}{2}(u_0+u(1))(\beta(1)+\eta_0+\eta(1))\xi^2(1) - \tfrac{1}{3}(\beta(1) 
+ \eta_0 + \eta(1))\xi^3(1) \\ 
= (R_4,\xi) - (R_3,\opPZ\gamma-\gamma) - (R_2,\opPZ\gamma), 
\end{multline} 
where 
\begin{equation} \label{eq:2p32}
R_4 = \tfrac{1}{2}u_0(\beta_x+\eta_x)\xi - \tfrac{1}{2}u_x(\beta+\eta_0+\eta)\xi 
+ \tfrac{1}{2}u(\beta_x+\eta_x)\xi - \tfrac{1}{3}(\beta_x+\eta_x)\xi^2. 
\end{equation} 
Adding now \eqref{eq:2p28} and \eqref{eq:2p31} we obtain
\begin{multline} \label{eq:2p33} 
\tfrac{1}{2}\tfrac{\mathrm d}{\mathrm dt}\left[\|\theta\|^2 + 
((\beta+\eta_0+\eta)\xi,\xi)\right] + \omega = \tfrac{1}{2}(\eta_t\xi,\xi) - 
\tfrac{1}{2}(u_x\theta,\theta) \\ 
+ \tfrac{1}{2}(\xi_x\theta,\theta) - (R_1,\theta) + (R_4,\xi) - 
(R_3,\opPZ\gamma-\gamma) - (R_2,\opPZ\gamma), 
\end{multline} 
where 
\begin{multline} \label{eq:2p34} 
\omega = \tfrac{1}{2}(u_0+u(1))\theta^2(1) + 
\tfrac{1}{2}(u_0+u(1))(\beta(1)+\eta_0+\eta(1))\xi^2(1) \\ 
+ (\beta(1)+\eta_0+\eta(1))\xi(1)\theta(1) - \tfrac{1}{2}\xi(1)\theta^2(1) - 
\tfrac{1}{3}(\beta(1)+\eta_0+\eta(1))\xi^3(1). 
\end{multline} 
In view of \eqref{eq:2p22}, by continuity we conclude that there exists a 
maximal temporal instance $t_h>0$ such that $(\eta_h,u_h)$ exist and 
$\|\xi_x\|_\infty \leq a$ for $t\leq t_h$. Suppose that $t_h<T$. Then, since 
$\|\xi\|_\infty\leq \|\xi_x\|_\infty$, it follows from \eqref{eq:2p34} that for 
$t\in[0,t_h]$ 
\begin{multline} \label{eq:2p35} 
\omega \geq \tfrac{1}{2}(u_0+u(1)-a)\theta^2(1) + 
\tfrac{1}{2}(\beta(1)+\eta_0+\eta(1))\left(u_0+u(1)-\tfrac{2a}{3}\right)\xi^2(1) 
\\ 
+ (\beta(1)+\eta_0+\eta(1))\xi(1)\theta(1) = 
\tfrac{1}{2}(\theta(1),\xi(1))^T\begin{pmatrix}\mu & \lambda \\ \lambda & 
\lambda\nu\end{pmatrix}\begin{pmatrix}\theta(1) \\ \xi(1)\end{pmatrix}, 
\end{multline}
where $\mu=u_0+u(1)-a$, $\lambda = \beta(1)+\eta_0+\eta(1)$, 
$\nu=u_0+u(1)-\tfrac{2a}{3}$. The hypotheses \eqref{eq:2p19a}--\eqref{eq:2p19b} 
give that $0<\mu<\nu$, $\lambda>0$. It is easy to see then that the matrix in 
\eqref{eq:2p35} will be positive semidefinite precisely when \eqref{eq:2p19c} 
holds. Hence, \eqref{eq:2p35} implies that $\omega \geq 0$.

We now estimate the various terms in the right-hand side of \eqref{eq:2p33} for 
$0\leq t\leq t_h$. As in the proof of Proposition 2.1 of \cite{AD2} adapted in 
the case of a variable $\beta(x)\in C^1$ and using an appropriate 
variable-$\beta$ superapproximation property to estimate 
$\|\opPZ\gamma-\gamma\|$. We finally obtain from \eqref{eq:2p33} and the fact 
that $\omega\geq 0$, that for $0\leq t\leq t_h$ it holds that 
\[ 
\tfrac{\mathrm d}{\mathrm dt}\left[\|\theta\|^2 + 
((\beta+\eta_0+\eta)\xi,\xi)\right] \leq Ch^{r-1}(\|\theta\|+\|\xi\|) + 
C(\|\theta\|^2 + \|\xi\|^2), 
\] 
where $C$ is a constant independent of $h$ and $t_h$. By \eqref{eq:2p19a} the 
norm $((\beta+\eta_0+\eta)\,\cdot,\cdot)^{1/2}$ is equivalent to that of $L^2$ 
uniformly for $t\in [0,T]$. Hence, Gronwall's inequality and the fact that 
$\theta(0)=\xi(0)=0$ yield for a constant $C=C(T)$ 
\begin{equation} \label{eq:2p36} 
\|\theta\|+\|\xi\|\leq Ch^{r-1}\quad \text{for}\quad 0\leq t\leq t_h. 
\end{equation} 
We conclude from the inverse properties that $\|\xi_x\|_\infty\leq Ch^{r-5/2}$ for 
$0\leq t\leq t_h$, and, since $r\geq 3$, if $h$ is taken sufficiently small, 
$t_h$ is not maximal. Hence we may take $t_h=T$ and \eqref{eq:2p23} follows from 
\eqref{eq:2p36}. 
\end{proof}

\subsection{Semidiscretization in the case of absorbing (characteristic) boundary 
conditions in the subcritical case} 
\label{sec:2p3} 

We finally consider the shallow water equations with variable bottom in the 
presence of transparent (characteristic) boundary conditions in the 
\emph{subcritical case}. In this case, instead of the variable $\eta$, we will 
use the \emph{total height} of the water, $H = \beta + \eta$. For 
$(x,t)\in[0,1]\times[0,T]$ we seek $H=H(x,t)$ and $u=u(x,t)$ satisfying the ibvp 
\begin{align} \label{eq:2p37} 
& \begin{aligned} 
& H_t + (Hu)_x = 0,\\ 
& u_t + H_x + uu_x = \beta_x, 
\end{aligned}\quad  
0\leq x\leq 1,\quad 0\leq t\leq T, \\ 
& H(x,0) = H^0(x),\quad u(x,0)=u^0(x),\quad 0\leq x\leq 1, \notag \\ 
& u(0,t) + 2\sqrt{H(0,t)} = u_0 + 2\sqrt{H_0},\quad 0\leq t\leq T, \notag \\ 
& u(1,t) - 2\sqrt{H(1,t)} = u_0 - 2\sqrt{H_0},\quad 0\leq t\leq T, \notag 
\end{align} 
where $H^0$, $u^0$ are given functions on $[0,1]$ and $H_0$, $u_0$ constants 
such that $H_0>0$ and $u_0^2 < H_0$. 

Implicit in the formulation of the boundary conditions in \eqref{eq:2p37} is 
that outside the spatial domain $[0,1]$ $u$ and $H$ are equal to constants 
$u_0$, $H_0$, respectively. The ibvp \eqref{eq:2p37} in a slightly different but 
equivalent form was studied by Petcu and Temam, \cite{PT2}, under the hypotheses 
that for some constant $c_0>0$ it holds that $u_0^2-H_0\leq -c_0^2$ and that the 
initial conditions $H^0(x)$ and $u^0(x)$ are sufficiently smooth and satisfy the 
condition $(u^0(x))^2-H^0(x) \leq -c_0^2$ and suitable compatibility relations 
at $x=0$ and $x=1$. Under these assumptions one may infer from the theory of 
\cite{PT2} that there exists a $T>0$ such that a sufficiently smooth solution 
$(H,u)$ of \eqref{eq:2p37} exists for $(x,t)\in[0,1]\times[0,T]$ with the 
properties that $H$ is positive and the strong supercriticality condition 
\begin{equation} \label{eq:2p38} 
u^2 - H \leq -c_0^2, 
\end{equation} 
holds for $(x,t)\in[0,1]\times[0,T]$. Here we will assume that the solution 
satisfies a stronger subcriticality solution; specifically that for some 
constant $c_0>0$ it holds that 
\begin{subequations} \label{eq:2p39} 
\begin{equation} \label{eq:2p39a} 
u_0 + \sqrt{H_0} \geq c_0,\quad u_0-\sqrt{H_0} \leq -c_0, 
\end{equation} 
and for $(x,t)\in[0,1]\times[0,T]$ that 
\begin{equation} \label{eq:2p39b} 
u + \sqrt{H} \geq c_0,\quad u-\sqrt{H} \leq -c_0.
\end{equation} 
\end{subequations} 
In this section we will approximate the solution of \eqref{eq:2p37} after 
transforming the system in diagonal form. We write the system of pde's in 
\eqref{eq:2p37} as 
\begin{equation} \label{eq:2p40} 
\begin{pmatrix} H_t \\ u_t \end{pmatrix} + A\begin{pmatrix} H_x \\ u_x 
\end{pmatrix} = \begin{pmatrix} 0 \\ \beta_x \end{pmatrix} 
\end{equation} 
where $A = \begin{pmatrix} u & H \\ 1 & u \end{pmatrix}$. The matrix $A$ has 
eigenvalues $\lambda_1=u+\sqrt{H}$, $\lambda_2=u-\sqrt{H}$, (note that 
\eqref{eq:2p39b} implies that $\lambda_1\geq c_0$ and $\lambda_2\leq -c_0$ in 
$[0,1]\times[0,T]$), with associated eigenvectors $X_1 = 
\big(\sqrt{H},1\big)^\mathrm T$, $X_2 = \big(-\sqrt{H},1\big)^\mathrm T$. If $S$ 
is the matrix with columns $X_1,\ X_2$ it follows from \eqref{eq:2p40} that 
\begin{equation} \label{eq:2p41} 
S^{-1} \begin{pmatrix} H_t \\ u_t \end{pmatrix} + \begin{pmatrix} \lambda_1 & 0 
\\ 0 & \lambda_2\end{pmatrix} S^{-1}\begin{pmatrix} H_x \\ u_x\end{pmatrix} = 
S^{-1}\begin{pmatrix} 0 \\ \beta_x \end{pmatrix}. 
\end{equation} 
If we try to define now functions $v,\ w$ on $[0,1]\times[0,T]$ by the equations 
$S^{-1}\begin{pmatrix} H_t \\ u_t \end{pmatrix} = \begin{pmatrix} v_t \\ w_t 
\end{pmatrix}$, $S^{-1}\begin{pmatrix} H_x \\ u_x \end{pmatrix} = 
\begin{pmatrix} v_x \\ w_x \end{pmatrix}$, we see that these equations are 
consistent and their solutions are given by $v = \frac 1 2 u + \sqrt{H} + c_v$, 
$w = \frac 1 2 u - \sqrt{H} + c_w$, for arbitrary constants $c_v,\ c_w$. 
Choosing the constants $c_v,\ c_w$ so that $v(0,t)=0$, $w(1,t)=0$, and using the 
boundary conditions in \eqref{eq:2p37} we get 
\begin{equation} \label{eq:2p42} 
v = \tfrac 1 2 \big[u-u_0 + 2\big(\sqrt{H}-\delta_0\big)\big],\quad w = \tfrac 1 
2 \big[u-u_0 - 2\big(\sqrt{H}-\delta_0\big)\big] 
\end{equation} 
where $\delta_0 = \sqrt{H_0}$. The original variables $H,\ u$ are given in terms 
of $v$ and $w$ by the formulas 
\begin{equation} \label{eq:2p43} 
H = (\tfrac 1 2 (v-w) + \delta_0)^2,\quad u = v + w + u_0 
\end{equation} 
Since 
\begin{equation} \label{eq:2p44} 
\lambda_1 = u+\sqrt{H} = u_0 + \delta_0 + \frac{3v+w}{2},\quad \lambda_2 = 
u-\sqrt{H} = u_0-\delta_0+\frac{v+3w}{2}  
\end{equation} 
we see that the ibvp \eqref{eq:2p37} becomes 
\begin{equation} \label{eq:2p45} 
\begin{gathered} 
 \begin{multlined}[t][.8\textwidth] \begin{pmatrix} v_t\\ w_t \end{pmatrix} + 
\begin{pmatrix} u_0+\delta_0+\tfrac{3v+w}{2} & 0 \\ 0 & 
u_0-\delta_0+\frac{v+3w}{2} \end{pmatrix} \begin{pmatrix} v_x \\ w_x 
\end{pmatrix} = \tfrac{1}{2}\beta_x\begin{pmatrix} 1 \\ 1 \end{pmatrix}, \\  
0\leq x\leq 1,\ \ 0\leq t\leq T. \end{multlined} \\ 
\begin{aligned} 
& v(x,0)=v^0(x),\quad w(x,0)=w^0(x), \quad 0\leq x\leq 1, 
\hspace{.1\linewidth}\\ 
& v(0,t)=0,\quad w(1,t)=0,\quad 0\leq t\leq T, 
\end{aligned}
\end{gathered}
\end{equation} 
where $v^0(x) = \tfrac1 2 [u^0(x)-u_0+2(\sqrt{H^0(x)}-\delta_0)]$, $w^0(x) = 
\tfrac1 2 [u^0(x)-u_0-2(\sqrt{H^0(x)}-\delta_0)]$. Under our hypotheses 
\eqref{eq:2p45} has a unique solution in $[0,1]\times[0,T]$ which will be 
assumed to be smooth enough for the purposes of the error estimation that 
follows. \\
Given a quasiuniform partition of $[0,1]$ as in section \ref{sec:2p1}, in 
addition to the spaces defined there, let for integer $k\geq 0$\ \ $\zr{\mathscr 
C}^k = \{ f\in C^k[0,1]: f(1)=0\}$, $\zr{\mathscr H}^{k+1} = \{ f\in 
H^{k+1}(0,1), f(1)=0 \}$, and, for integer $r\geq 2$, $\zr{\mathscr S}^0_h =\{ 
\phi\in \zr{\mathscr C}^{r-2}: \phi\big|_{[x_j,x_{j+1}]}\in \mathbb P_{r-1}, 
1\leq j\leq N\}$. Note that the analogs of the approximation and inverse 
properties \eqref{eq:2p2}, \eqref{eq:2p4} hold for $\zr{\mathscr S}_h$ as well, 
and that the estimates in \eqref{eq:2p3} are also valid for the $L^2$ projection 
$\operatorname{P^1}$ onto $\zr{\mathscr S}_h$, {\em mutatis mutandis}.
The (standard) Galerkin semidiscretization of \eqref{eq:2p45} is then defined as 
follows: Seek $v_h:[0,T]\to \zr{S}_h,\ w_h:[0,T]\to \zr{\mathscr S}_h$, such 
that for $t\in [0,T]$ 
\begin{gather} 
(v_{ht},\phi) + ((u_0+\delta_0)v_{hx},\phi) + \tfrac 3 2 (v_hv_{hx},\phi) + 
\tfrac 1 2 (w_hv_{hx},\phi) = \tfrac 1 2 (\beta_x,\phi),\ \forall \phi\in \zr 
S_h, \label{eq:2p46} \\ 
(w_{ht},\chi) + ((u_0{-}\delta_0)w_{hx},\chi) + \tfrac 3 2 (w_hw_{hx},\chi) + 
\tfrac 1 2 (v_hw_{hx},\chi) = \tfrac 1 2 (\beta_x,\chi),\; \forall \chi\in 
\zr{\mathscr S}_h,  \label{eq:2p47} 
\end{gather}
with 
\begin{equation} \label{eq:2p48} 
v_h(0) = \opPZ(v^0),\quad w_h(0) = \operatorname{P^1}(w^0). 
\end{equation}
The boundary conditions induced by the finite element spaces and the discrete 
variational formulation \eqref{eq:2p46}--\eqref{eq:2p48} are no longer exactly 
transparent; they are highly absorbent nevertheless as will be checked in 
numerical experiments in Section \ref{sec:3}. \\ 
The main result of this section is 
\begin{proposition} \label{prop:2p3} 
Let $(v,w)$ be the solution of \eqref{eq:2p45} and assume that the hypotheses 
\eqref{eq:2p39a}--\eqref{eq:2p39b} hold, that $r\geq 3$, and that $h$ is 
sufficiently small. Then the semidiscrete ivp \eqref{eq:2p46}--\eqref{eq:2p48} 
has a unique solution $(v_h, w_h)$ for $0\leq t\leq T$ that satisfies 
\begin{equation} \label{eq:2p49} 
\max_{0\leq t\leq T} \left(\|v-v_h\|+\|w-w_h\|\right) \leq C h^{r-1}. 
\end{equation} 
If $(H,u)$ is the solution of \eqref{eq:2p39} and we define 
\begin{equation} \label{eq:2p50} 
H_h = [\tfrac 1 2 (v_h-w_h)+\delta_0]^2,\quad u_h = v_h + w_h + u_0, 
\end{equation} 
then 
\[ 
\max_{0\leq t\leq T} \left(\|H-H_h\|+\|u-u_h\|\right) \leq C h^{r-1}. 
\]
\end{proposition}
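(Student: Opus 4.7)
My plan is to follow the template of Propositions~\ref{prop:2p1} and~\ref{prop:2p2}, but to exploit the fact that the transformed system \eqref{eq:2p45} is already in diagonal form; this means the $L^2$-energy argument can be run by testing directly with the error functions themselves, without recourse to an auxiliary test function of the form $\opPZ\gamma$. I set $\rho_1=v-\opPZ v$, $\theta_1=\opPZ v-v_h\in\zr{S}_h$ and $\rho_2=w-\operatorname{P^1}w$, $\theta_2=\operatorname{P^1}w-w_h\in\zr{\mathscr S}_h$, subtract \eqref{eq:2p46}--\eqref{eq:2p47} from the weak form of \eqref{eq:2p45}, and split the nonlinearities $\tfrac{3}{2}(vv_x-v_hv_{hx})$, $\tfrac{1}{2}(wv_x-w_hv_{hx})$ and their analogs in the usual way. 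Testing the resulting $v$-error equation with $\theta_1$ and the $w$-error equation with $\theta_2$ produces two energy identities.

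The crucial step is the integration-by-parts treatment of the principal transport terms. Because $\theta_1(0)=0$ and $\theta_2(1)=0$, the only surviving boundary contributions are at $x=1$ for the $v$-identity and at $x=0$ for the $w$-identity. A short calculation shows that, after collecting the linear and quadratic (in $v_h$) contributions and comparing with $\lambda_1=u+\sqrt{H}$, $\lambda_2=u-\sqrt{H}$ (and using $w(1)=0$, $v(0)=0$ from the boundary conditions), the leading boundary terms are $\tfrac{1}{2}\lambda_1(1)\theta_1^2(1)$ and $-\tfrac{1}{2}\lambda_2(0)\theta_2^2(0)$, up to lower-order corrections like $(v_h-v)(1)\theta_1^2(1)$ that are absorbable via the bootstrap. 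The strict subcriticality hypothesis \eqref{eq:2p39b} gives $\lambda_1(1)\geq c_0$ and $-\lambda_2(0)\geq c_0$, so both boundary terms are nonnegative and can be discarded. The mixed coupling terms $(w_hv_{hx},\theta_1)$ and $(v_hw_{hx},\theta_2)$ produce boundary contributions at $x=1$ and $x=0$ that vanish thanks to $w_h(1)=0$ and $v_h(0)=0$, so they cause no difficulty.

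The remaining steps are routine. I bound the residuals $R_j$ and the quadratic error terms using the approximation properties \eqref{eq:2p2}, the inverse estimates \eqref{eq:2p4}, and the variable-$\beta$ superapproximation estimate already employed in Proposition~\ref{prop:2p2}. A continuation argument maintains $\|\theta_{1x}\|_\infty+\|\theta_{2x}\|_\infty\leq 1$ on a maximal interval $[0,t_h]$; summing the two energy inequalities and invoking Gronwall yields $\|\theta_1\|+\|\theta_2\|\leq Ch^{r-1}$ on $[0,t_h]$, whence the inverse estimate gives $\|\theta_{jx}\|_\infty\leq Ch^{r-5/2}$, and for $r\geq3$ and $h$ small enough we conclude $t_h=T$. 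This proves \eqref{eq:2p49}. For the second bound, $u-u_h=(v-v_h)+(w-w_h)$ is immediate from \eqref{eq:2p50}, while factoring the difference of squares gives $H-H_h=\bigl[\tfrac{1}{2}((v-v_h)-(w-w_h))\bigr]\bigl[\tfrac{1}{2}(v+v_h-w-w_h)+2\delta_0\bigr]$; the first factor is $O(h^{r-1})$ in $L^2$ by \eqref{eq:2p49}, and the second is uniformly bounded in $L^\infty$ because the inverse estimate yields $\|\theta_j\|_\infty\leq Ch^{r-3/2}$, keeping $v_h,w_h$ close to $v,w$ in $L^\infty$. The main obstacle I anticipate is the careful bookkeeping of the boundary contributions arising from the three nonlinear couplings; the fact that the favorable sign at each outflow boundary survives the nonlinear perturbations is really what makes the argument work, and is ultimately where the strict subcriticality \eqref{eq:2p39b} is used.
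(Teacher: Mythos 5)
Your outline is in substance the proof the paper intends: the paper omits the argument, deferring to Proposition 3.1 of \cite{AD2}, and that argument is precisely your diagonal-variable energy estimate. Because \eqref{eq:2p45} is already diagonal, one tests the two error equations directly with $\theta_1\in\zr S_h$ and $\theta_2\in\zr{\mathscr S}_h$ (no symmetrizing test function of the type $\opPZ\gamma$ is needed), the $\tfrac12(\beta_x,\phi)$ sources cancel when \eqref{eq:2p46}--\eqref{eq:2p47} are subtracted from the weak form of \eqref{eq:2p45} (this cancellation is the only point where the variable bottom enters), the conditions $\theta_1(0)=0$, $\theta_2(1)=0$, $w_h(1)=0$, $v_h(0)=0$ leave boundary terms only at the outflow ends, and \eqref{eq:2p39a}--\eqref{eq:2p39b} give them the favorable sign; the recovery of $H-H_h$ and $u-u_h$ from \eqref{eq:2p50} by the difference-of-squares factorization is also exactly as in the cited proof. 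So the structure, the role of strict subcriticality, and the bootstrap are all aligned with the paper.

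One step, as you state it, would not close. After integration by parts the coefficient of $\theta_1^2(1)$ is $\tfrac12(u_0+\delta_0)+\tfrac34 v_h(1)=\tfrac12\lambda_1(1)-\tfrac34\bigl(\rho_1(1)+\theta_1(1)\bigr)$, and the analogous expression holds at $x=0$ for $\theta_2$. Your continuation hypothesis $\|\theta_{1x}\|_\infty+\|\theta_{2x}\|_\infty\le 1$ only yields $|\theta_1(1)|\le 1$, so when $c_0$ is small (certainly if $c_0<3/2$) the perturbed coefficient can be negative; a term $-\varepsilon\,\theta_1^2(1)$ with $\varepsilon$ of order one cannot be absorbed into $C\|\theta_1\|^2$, since by the inverse property \eqref{eq:2p4} one only has $\theta_1^2(1)\le Ch^{-1}\|\theta_1\|^2$, and Gronwall then produces an $e^{Ct/h}$ factor. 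The fix is the same device the paper uses in Proposition~\ref{prop:2p2} with the constant $a$: run the continuation argument with a threshold tied to $c_0$, e.g.\ $\|\theta_{1x}\|_\infty+\|\theta_{2x}\|_\infty\le \tfrac{c_0}{2}$ (any constant below $\tfrac{2c_0}{3}$ minus the $O(h^r)$ contribution of $\rho_1(1)$, $\rho_2(0)$ works), so that both boundary quadratic forms stay nonnegative on $[0,t_h]$; the bootstrap $\|\theta_{jx}\|_\infty\le Ch^{r-5/2}$ with $r\ge 3$ then shows $t_h=T$ exactly as in your plan. With that adjustment the rest of your argument (superapproximation and projection stability for the mixed terms, Gronwall, and the final recovery of $H,u$) is sound.
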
 

\begin{proof} 
With the notation that we have introduced incorporating the variable $\beta(x)$, 
it can be seen that the proof is entirely analogous to that of Proposition 3.1 
of \cite{AD2} ---\emph{mutatis mutandis}--- and will consequently be omitted. 
(Note that the source terms involving $\beta_x$ in the right-hand sides of 
\eqref{eq:2p45}, \eqref{eq:2p46}, \eqref{eq:2p47} will cancel in the variational 
error equations.) 

\end{proof}

\section{Numerical experiments}
\label{sec:3} 

In this section we present results of numerical experiments that we performed 
solving numerically the shallow water equations using standard Galerkin finite 
element space discretizations like the ones analyzed in the previous section. 
The semidiscrete schemes were discretized in the temporal variable by the 
`classical', explicit, 4-stage, 4\textsuperscript{th}-order Runge-Kutta scheme 
(RK4), unless otherwise indicated. The resulting fully discrete scheme is stable 
and fourth-order accurate in time provided a Courant-number stability condition 
of the form $\frac k h \leq \alpha$ is imposed; here $k$ denotes the (uniform) 
time step. In the case of a horizontal bottom the convergence of this scheme for 
the ibvp \eqref{eq:2p1} was analyzed in \cite{ADK} and used in numerical 
experiments for the absorbing b.c.\ ibvp's \eqref{eq:2p15} and \eqref{eq:2p37} 
in \cite{AD2}.  

In section \ref{sec:3p1} below we use this fully discrete scheme to study 
computationally various issues related to the discretization of the ibvp's with 
absorbing (characteristic) b.c.'s considered in sections \ref{sec:2p2} and 
\ref{sec:2p3}. In section \ref{sec:3p2} we write the shallow water equations in 
the form of a balance law and study various issues of the numerical solution of 
this model with Galerkin-finite element methods, including questions of `good 
balance' of the schemes. Since the numerical method simulates only smooth 
solutions, initial conditions and bottom topographies were taken to be of small 
amplitude to ensure that no discontinuities developed within the time frame of 
the experiments.

\subsection{Absorbing (characteristic) boundary conditions} 
\label{sec:3p1} 

In the numerical experiments of this section we use the Galerkin finite element 
method with continuous, piecewise linear functions for the space discretization 
of the numerical solution of the ibvp's with absorbing (characteristic) boundary 
conditions considered in sections \ref{sec:2p2} and \ref{sec:2p3}. The 
theoretical error estimates in Propositions \ref{prop:2p2} and \ref{prop:2p3} 
require at least piecewise quadratic elements, i.e.\ $r\geq 3$, and predict 
$L^2$-error bounds of $\mathcal O(h^{r-1})$ for quasiuniform meshes. The results 
of numerical experiments shown in the sequel suggest that the method works with 
piecewise linear functions (i.e.\ $r=2$) as well, and in this case the $L^2$ 
errors for a uniform mesh are of $\mathcal O(h^2)$. 

In the \emph{supercritical} case, in order to find the numerical convergence 
rates of the scheme \eqref{eq:2p20}--\eqref{eq:2p21} we consider an ibvp with 
$\eta_0=1$, $u_0=3$ and a bottom function and exact solution given for 
$x\in(0,1)$ by 
\begin{equation} \label{eq:3p1}
\begin{aligned}  
& \beta(x) = 1 - 0.04\exp(-100(x-0.5)^2), \\ 
& \eta(x,t) = x\exp(-xt) + \eta_0, \quad u(x,t) = (1-x-\cos(\pi x))\exp(2t) + 
u_0. 
\end{aligned} 
\end{equation} 
(The initial conditions and an appropriatee right-hand side were computed from 
these formulas.) The problem was solved with a uniform mesh with $h=1/N$ and 
$k=h/10$. The $L^2$ errors and rates of convergence at $T=1$ are shown in Table 
\ref{tab:1}. 
\begin{table}[htbp] \tt 
\centering
{\small 
\begin{tabular}[b]{|c|cc|cc|}
\hline
\multicolumn{1}{|c|}{~} &
\multicolumn{2}{|c|}{$\eta$} &
\multicolumn{2}{|c|}{$u$} \\ \hline
$N$ & $L_2$ {\rm error} & {\rm rate} & $L_2$ {\rm error} & {\rm rate} \\ \hline
 40 & 1.3202e-03 & - & 6.1375e-03 & - \\ \hline
 80 & 3.2932e-04 & 2.003 & 1.5334e-03 & 2.001 \\ \hline
160 & 8.2245e-05 & 2.001 & 3.8335e-04 & 2.000 \\ \hline
320 & 2.0550e-05 & 2.001 & 9.5918e-05 & 1.999 \\ \hline
640 & 5.1361e-06 & 2.000 & 2.4070e-05 & 1.995 \\ \hline
\end{tabular}
}
\caption{$L^2$ errors and rates of convergence at $T=1$, $r=2$, supercritical 
case, \eqref{eq:3p1}, $h=1/N$, $k/h=1/10$ \label{tab:1}} 
\end{table} 

In the case of a \emph{subcritical} flow we consider an ibvp with $\eta_0=1$, 
$u_0=1$, and bottom function and exact solution given for $x\in(0,1)$ by 
\begin{align} \label{eq:3p2}
& \begin{aligned} 
& b(x) = 1 - 0.04\exp(-100(x-0.5)^2), \\ 
& \eta(x,t) = (x+1)\exp(-xt), \\ 
& u(x,t) = (2x + \cos(\pi x) - 1)\exp(t) + x A(t) + (1-x)B(t), \\ 
\end{aligned} \\ 
& \text{where} \notag \\ 
& A(t) = 2\sqrt{1+\eta(1,t)}+u_0-2\sqrt{1+\eta_0},\notag \\ 
& B(t) = -2\sqrt{1+\eta(0,t)}+u_0+2\sqrt{1+\eta_0}. \notag 
\end{align} 
(The initial conditions and an appropriate right-hand side were computed by 
these formulas). The problem was solved by the scheme 
\eqref{eq:2p46}--\eqref{eq:2p48}, \eqref{eq:2p50}, with $h=1/N$ and $k=h/10$. 
The $L^2$ errors and rates of convergence for the variables $\eta$ and $u$ at 
$T=1$ are shown in Table \ref{tab:2}. 
\begin{table}[htbp] \tt 
\centering
{\small 
\begin{tabular}[b]{|c|cc|cc|}
\hline
\multicolumn{1}{|c|}{~} &
\multicolumn{2}{|c|}{$\eta$} &
\multicolumn{2}{|c|}{$u$} \\ \hline
$N$ & $L_2$ {\rm error} & {\rm rate} & $L_2$ {\rm error} & {\rm rate} \\ \hline
 40 & 7.8451e-03 & - & 4.7238e-03 & - \\ \hline
 80 & 1.9602e-03 & 2.001 & 1.2154e-03 & 1.959 \\ \hline
160 & 4.8955e-04 & 2.001 & 3.0717e-04 & 1.984 \\ \hline
320 & 1.2229e-04 & 2.001 & 7.7169e-05 & 1.993 \\ \hline
640 & 3.0560e-05 & 2.001 & 1.9349e-05 & 1.996 \\ \hline
\end{tabular}
}
\caption{$L^2$ errors and rates of convergence at $T=1$, $r=2$, subcritical 
case, \eqref{eq:3p2}, $h=1/N$, $k/h=1/10$ \label{tab:2}} 
\end{table}

It is clear that Tables \ref{tab:1} and \ref{tab:2} suggest that the $L^2$ 
convergence rates are optimal in the case of piecewise linear elements on a 
uniform mesh. 

In order to check further the accuracy of the numerical schemes we consider in 
the \emph{supercritical} case a problem with a variable bottom having a single 
hump, and constant initial conditions on $(0,1)$ given by 
\begin{equation} \label{eq:3p3} 
\begin{aligned} 
& \beta(x) = 1-0.4\exp(-100(x-0.5)^2), \\ 
& \eta^0(x) = \eta_0 = 1, \quad u^0(x) = u_0 = 3, 
\end{aligned} 
\end{equation} 
that we integrate numerically using $h=1/400$, $k=h/3$. In Figure \ref{fig:1} we 
show some profiles of the temporal evolution of the numerical solution up to 
$t=0.5$. The data given by \eqref{eq:3p3} and the  boundary conditions generate 
a wave moving to the right and sensing the effect of the variable bottom which 
is centered at $x=0.5$. There are no spurious oscillations reflected from the 
boundary $x=1$ as the wave exits. By $t=0.5$ the solution has attained a 
\emph{steady state} shown in \eqref{fig:1d}. 

\begin{figure}[htbp]
\centering 

\begin{subfigure}[b]{0.49\textwidth}
\includegraphics[width=\textwidth]{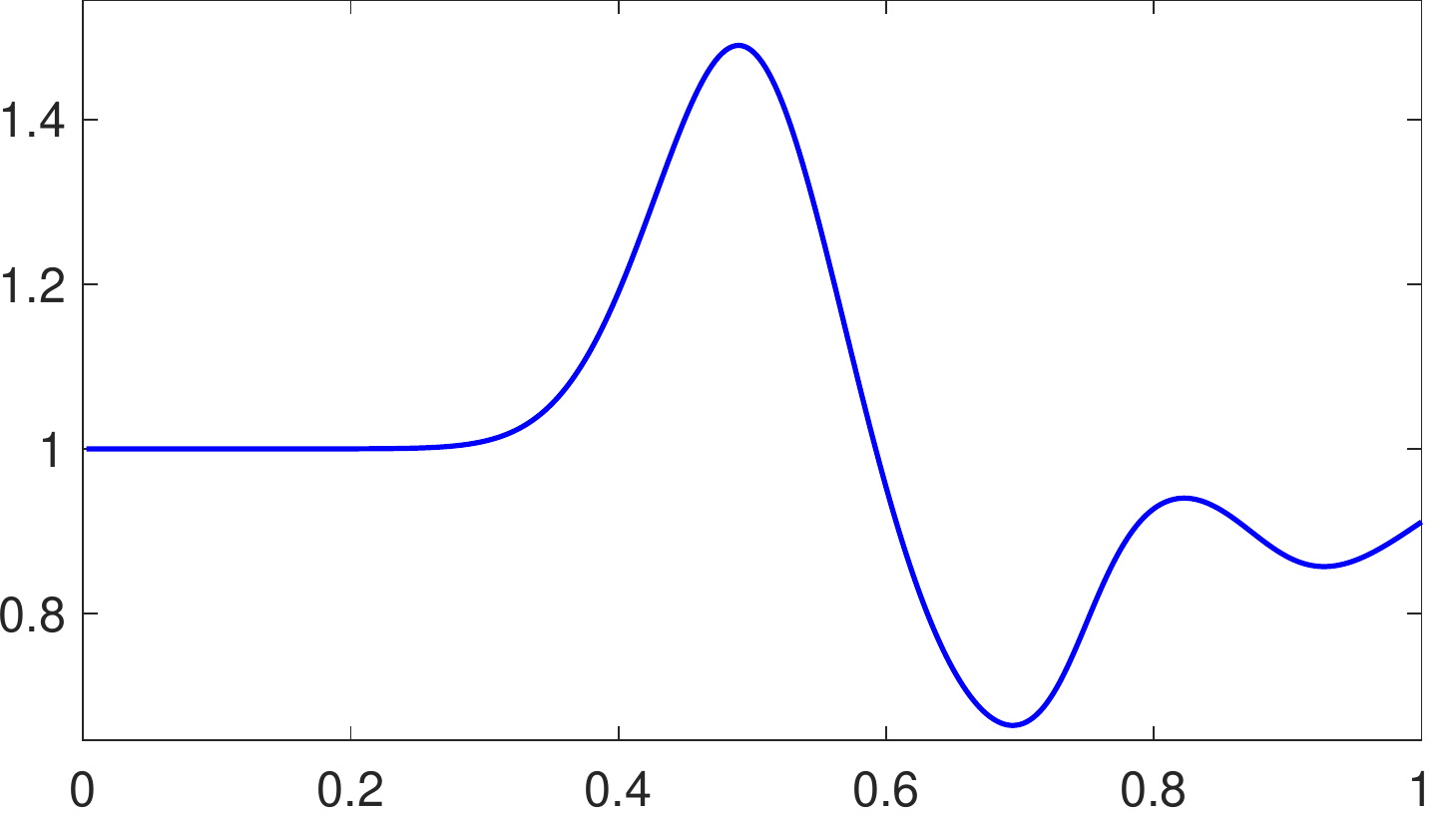}
\subcaption{$\eta$ for $t=0.1$}
\end{subfigure}\hspace{.01\textwidth}
\begin{subfigure}[b]{0.49\textwidth}
\includegraphics[width=\textwidth]{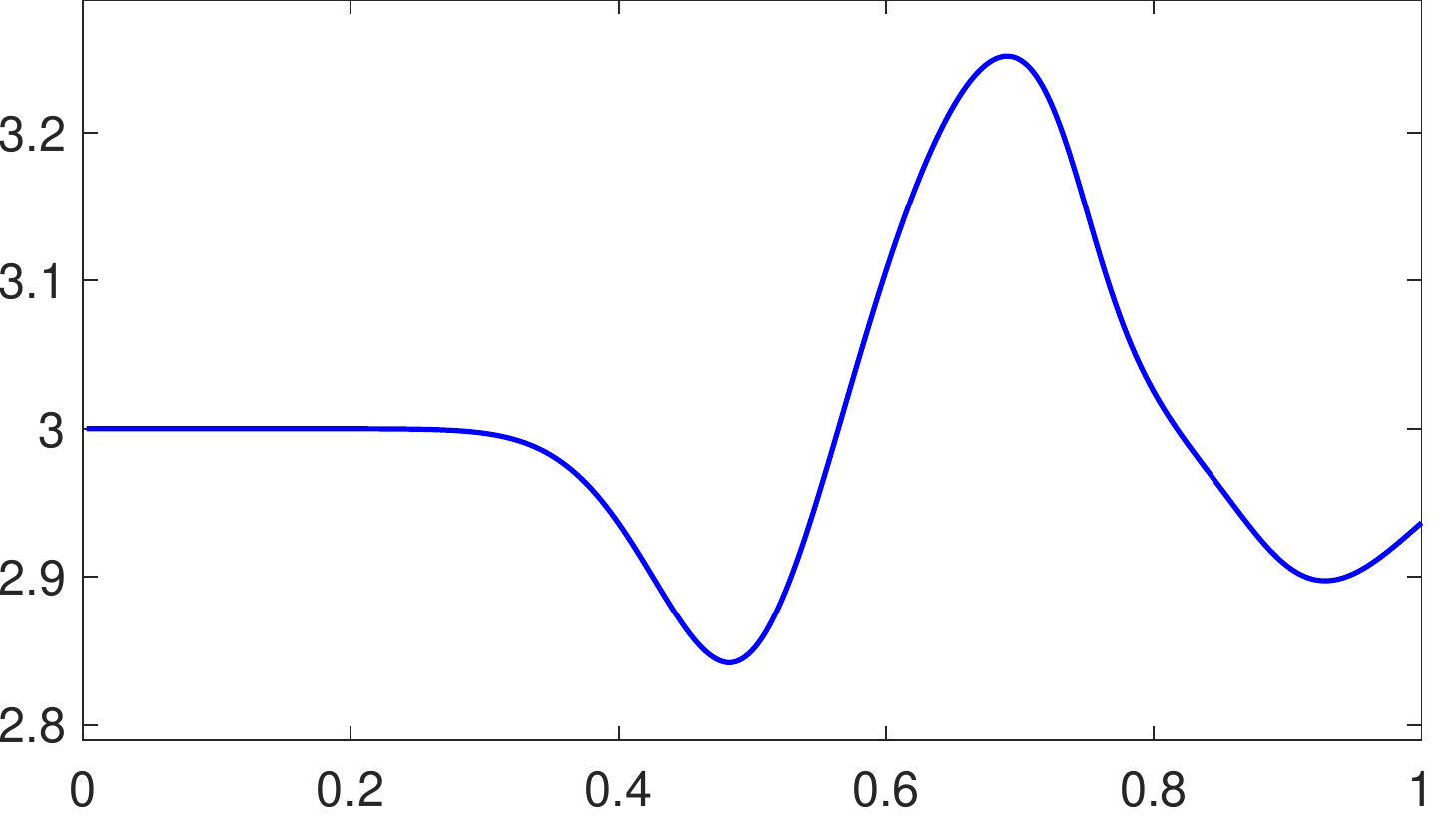}
\subcaption*{$u$ for $t=0.1$}
\end{subfigure}\\[1ex]
\begin{subfigure}[b]{0.49\textwidth}
\includegraphics[width=\textwidth]{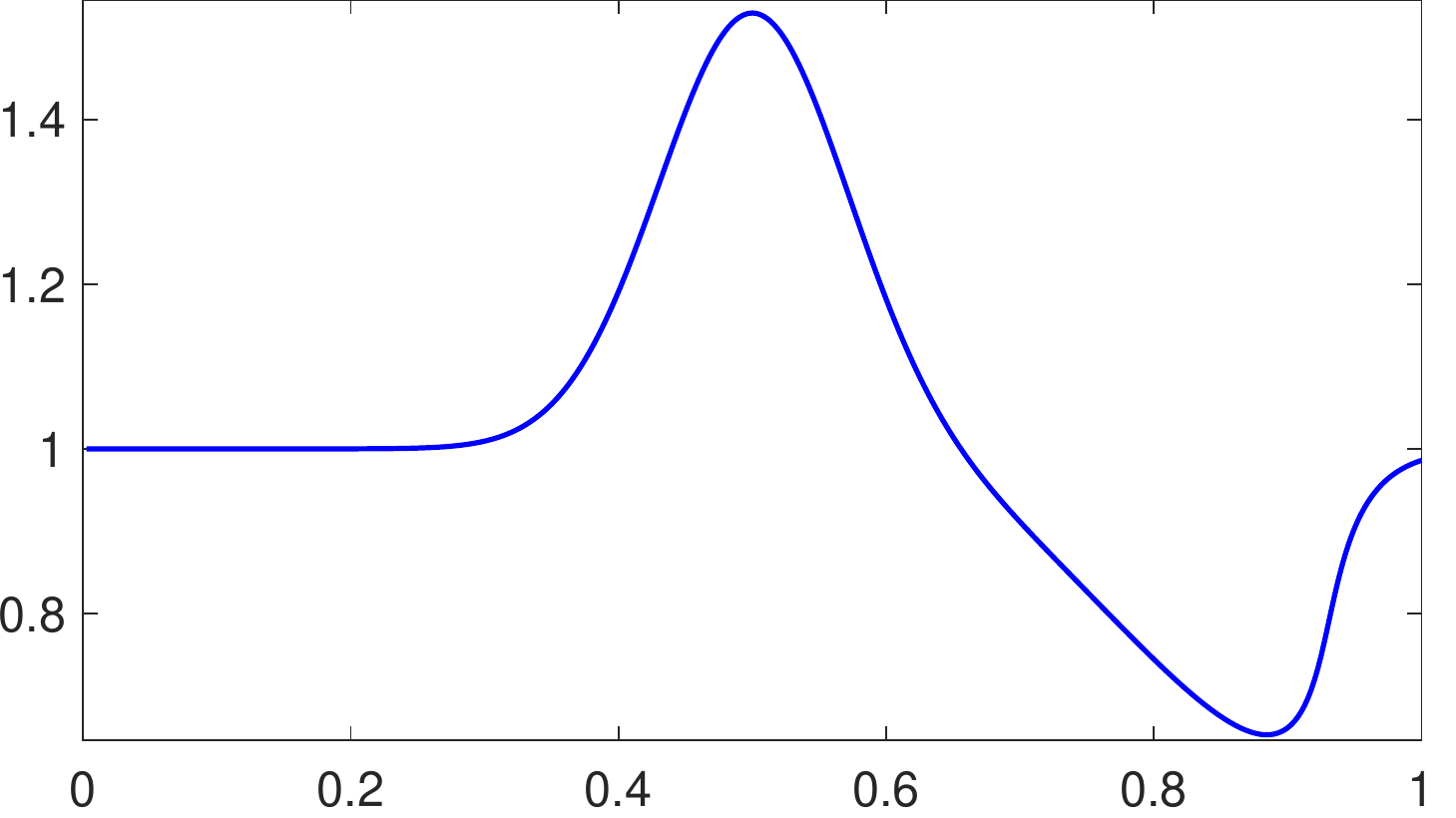}
\subcaption{$\eta$ for $t=0.2$}
\end{subfigure}\hspace{.01\textwidth}
\begin{subfigure}[b]{0.49\textwidth}
\includegraphics[width=\textwidth]{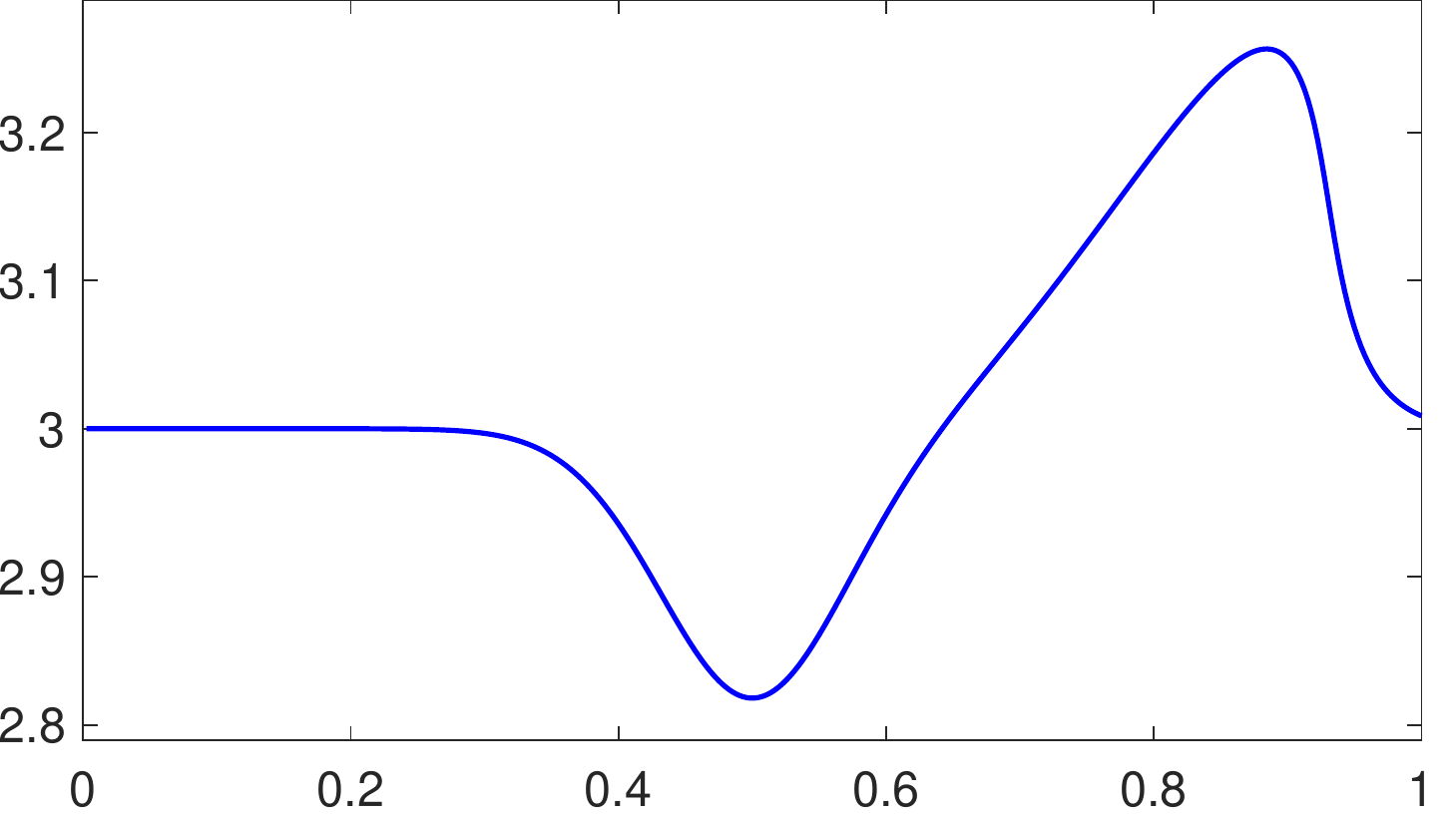}
\subcaption*{$u$ for $t=0.2$}
\end{subfigure}\\[1ex]
\begin{subfigure}[b]{0.49\textwidth}
\includegraphics[width=\textwidth]{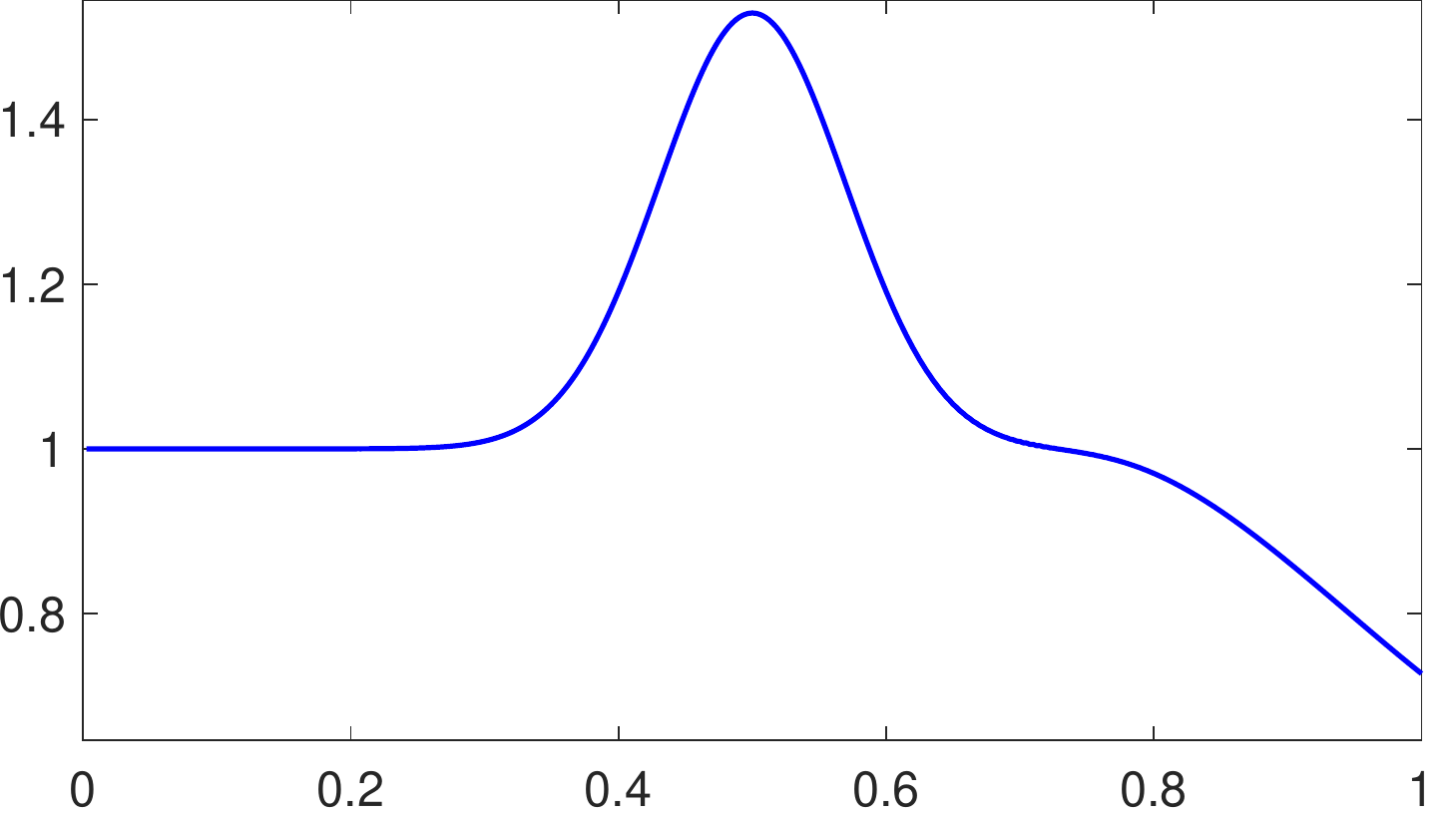}
\subcaption{$\eta$ for $t=0.3$}
\end{subfigure}\hspace{.01\textwidth}
\begin{subfigure}[b]{0.49\textwidth}
\includegraphics[width=\textwidth]{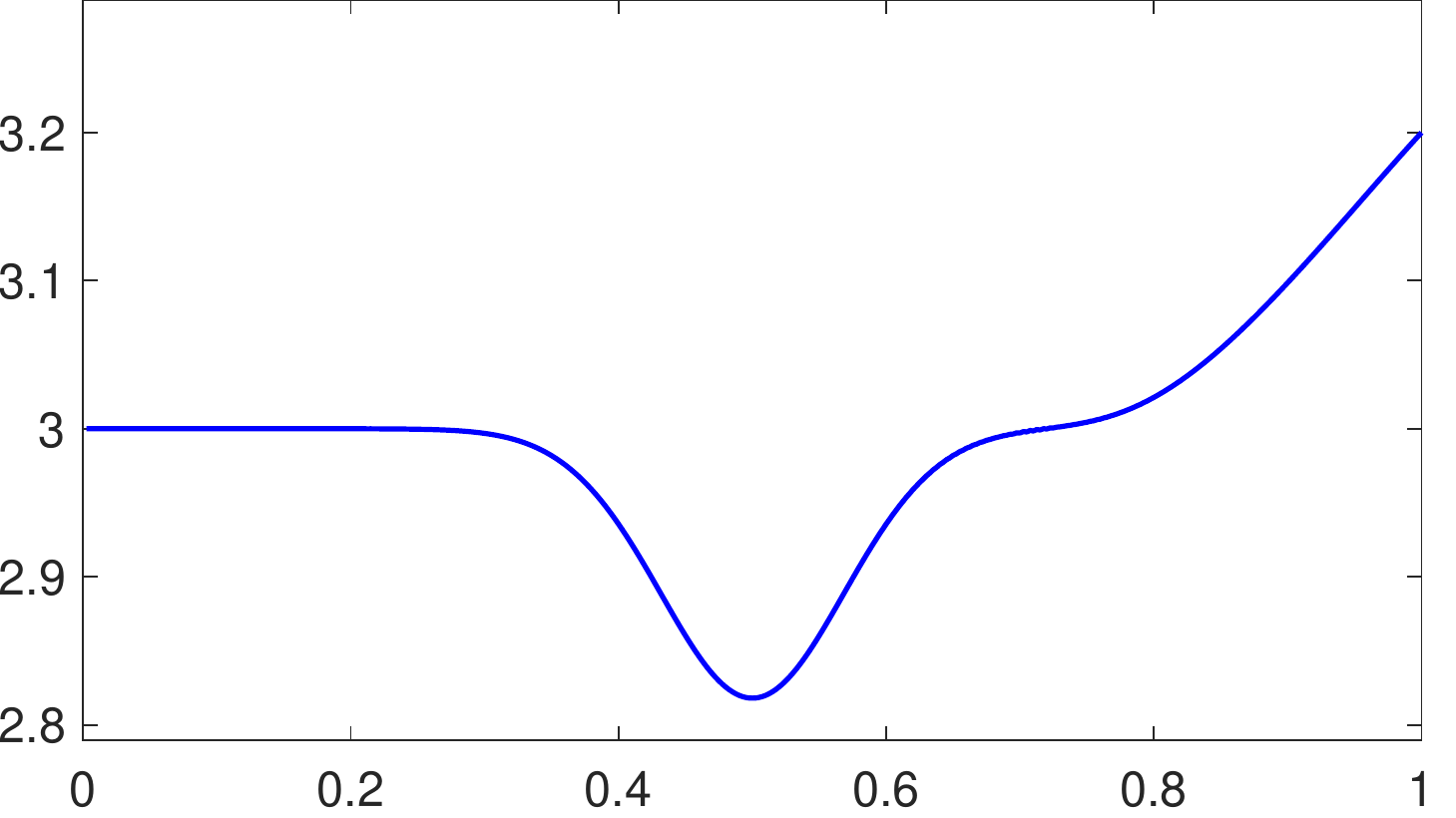}
\subcaption*{$u$ for $t=0.3$}
\end{subfigure}\\[1ex]
\begin{subfigure}[b]{0.49\textwidth}
\includegraphics[width=\textwidth]{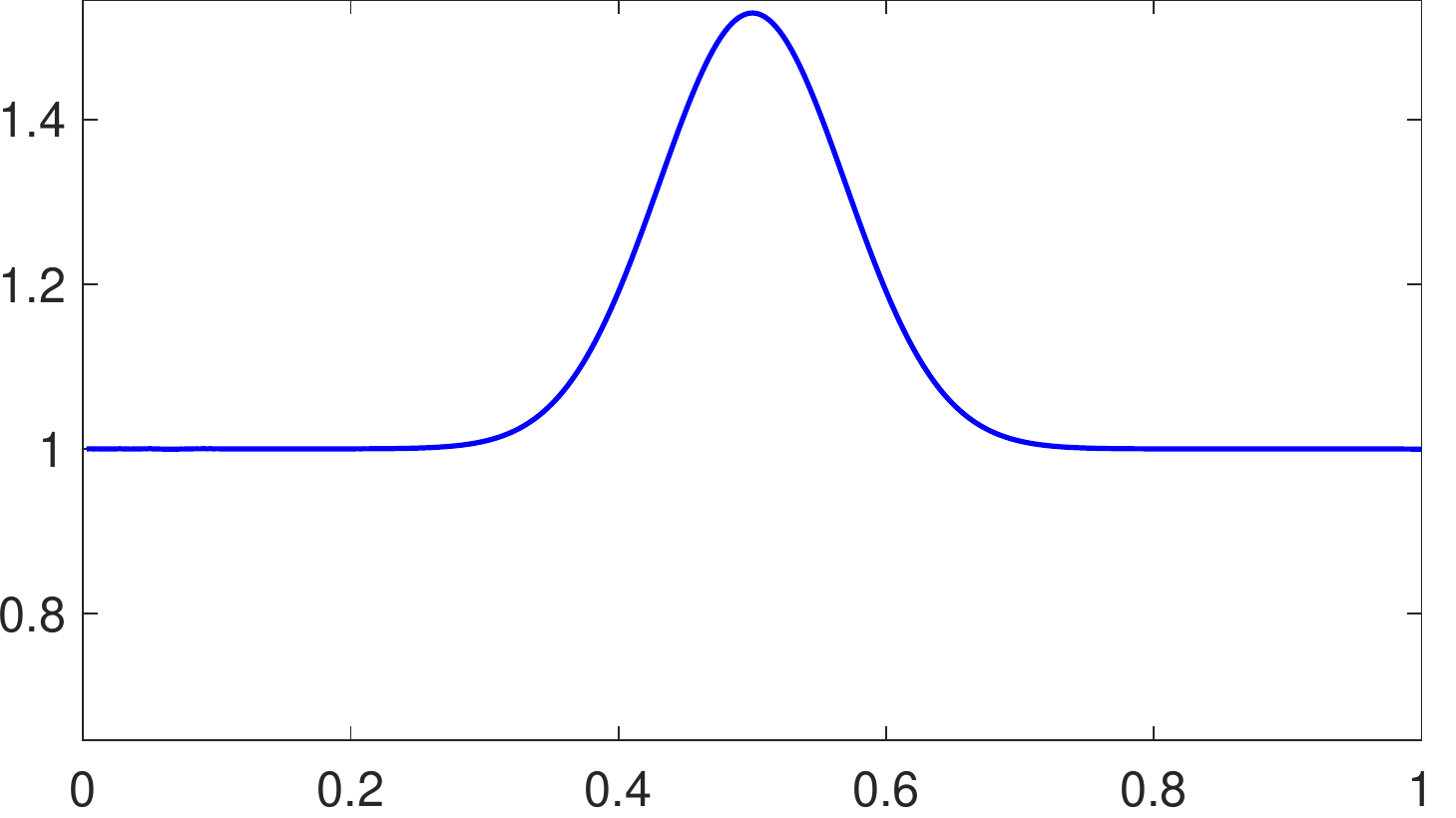}
\subcaption{$\eta$ for $t=0.5$ \label{fig:1d}}
\end{subfigure}\hspace{.01\textwidth}
\begin{subfigure}[b]{0.49\textwidth}
\includegraphics[width=\textwidth]{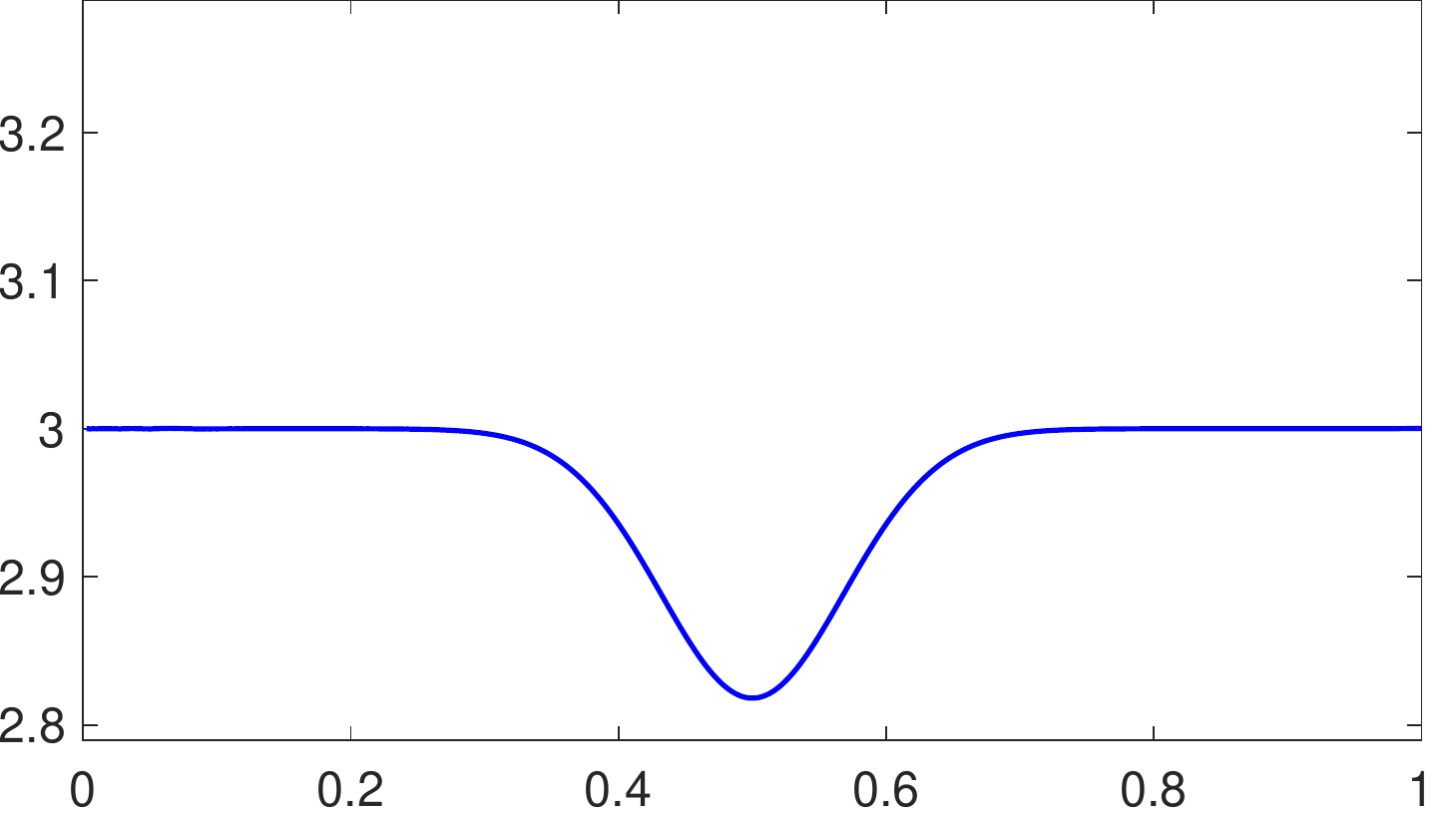}
\subcaption*{$u$ for $t=0.5$}
\end{subfigure}\\[1ex]
\caption{Evolution with data \eqref{eq:3p3}, supercritical case, $r=2$, $h=1/400$, 
$k=h/3$ \label{fig:1}} \vspace{-1ex} 
\end{figure}

The steady state of such flows is straightforward to determine analytically. Its 
profile $\eta=\eta(x)$, $u=u(x)$ satisfies the equations 
\begin{equation} \label{eq:3p4} 
\begin{aligned} 
& \left((\beta+\eta)u\right)_x = 0, \\ 
& \left(\eta + \tfrac 1 2 u^2\right)_x = 0, 
\end{aligned} 
\end{equation} 
from which using the boundary conditions at $x=0$, we see that $u$ is given in 
terms of $\eta$ by 
\begin{subequations} \label{eq:3p5}  
\begin{equation} \label{eq:3p5a} 
u = \frac{u_0\left(\eta_0+\beta(0)\right)}{\eta+\beta}, 
\end{equation} 
where $\eta$ is the physically acceptable solution of the cubic equation 
\begin{equation} \label{eq:3p5b} 
(\eta+\beta)^2\left(\eta - \eta_0 - \tfrac 1 2 u_0^2\right) + \tfrac 1 2 u_0^2 
\left(\eta_0+\beta(0)\right)^2 = 0. 
\end{equation} 
\end{subequations}
(For the analysis of the solutions of the steady-state problem, cf.\ \cite{HK}). 
We checked the ability of the code to preserve steady-state solutions by taking 
the profile computed analytically from \eqref{eq:3p5} for this problem as 
initial condition and integrating up to $t=0.6$. The difference between the final 
profile and the $L^2$ projection of the analytical initial condition was of 
$\mathcal O(10^{-9})$ in $L^2$ for both components when $h=1/400$, $k=h/10$. 

In Figure \ref{fig:2} we show instances of the temporal evolution up to the 
attainment of steady state (in \eqref{fig:2d}) of the supercritical flow generated 
with $h=1/400$, $k=h/3$, by $\eta_0=1$, $u_0=3$ and bottom topography and 
initial conditions given on $[0,1]$ by 
\begin{equation} \label{eq:3p6} 
\begin{aligned} 
& \beta(x) = 1-0.04\exp(-1000(x-0.75)^2), \\ 
& \eta^0(x) = 0.05\exp(-400(x-0.25)^2) + \eta_0, \\ 
& u^0(x) = 0.1\exp(-400(x-0.25)^2)+u_0. 
\end{aligned} 
\end{equation} 
The variable initial profile gives rise to a wavetrain that moves to the right, 
interacts with the bottom and exits without spurious oscillations leaving behind 
the steady state that depends only on $\eta_0$, $u_0$ and $\beta$. 

\begin{figure}[htbp]
\centering 

\begin{subfigure}[b]{0.49\textwidth}
\includegraphics[width=\textwidth]{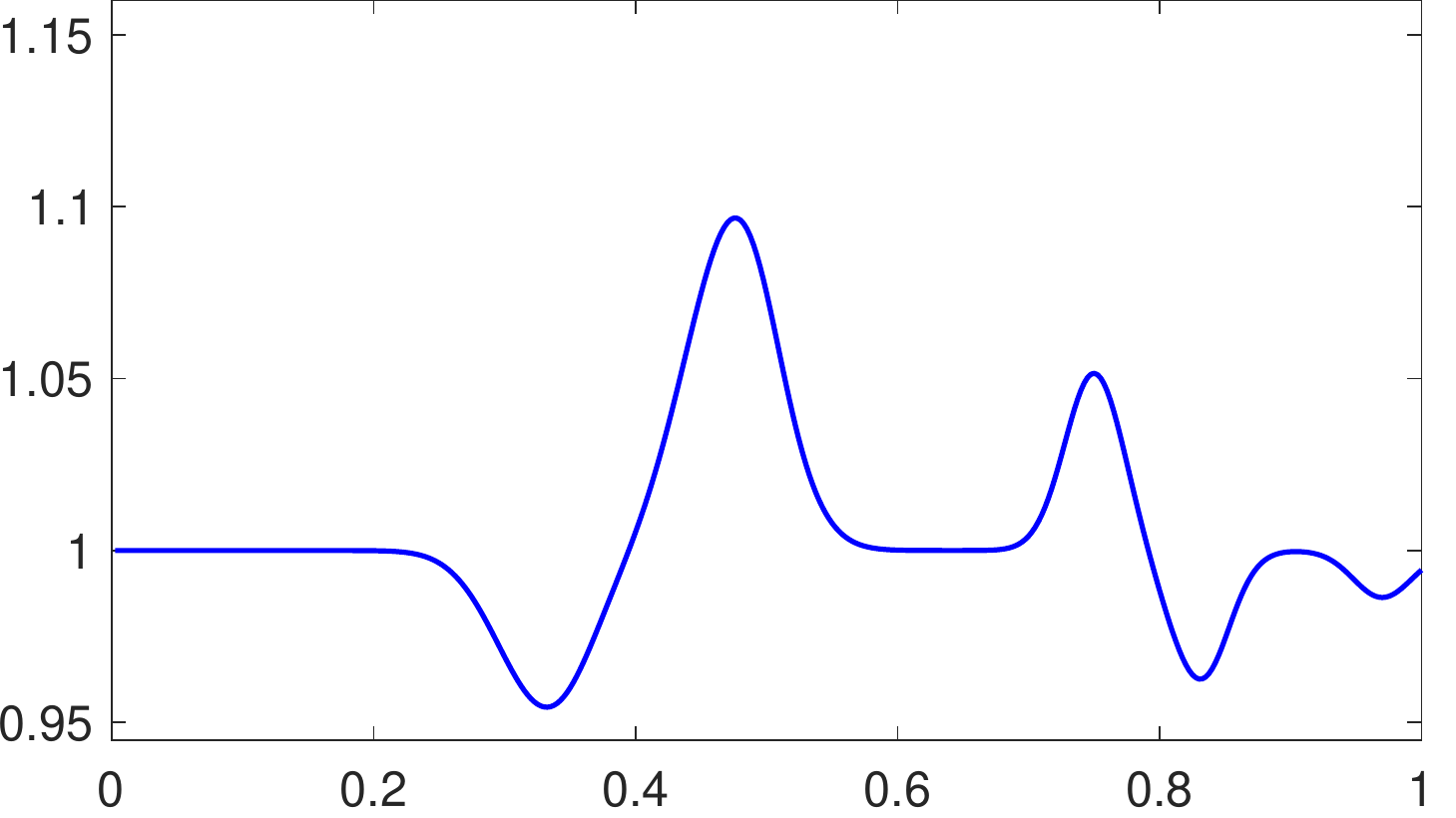}
\subcaption{$\eta$ for $t=0.5000$}
\end{subfigure}\hspace{.01\textwidth}
\begin{subfigure}[b]{0.49\textwidth}
\includegraphics[width=\textwidth]{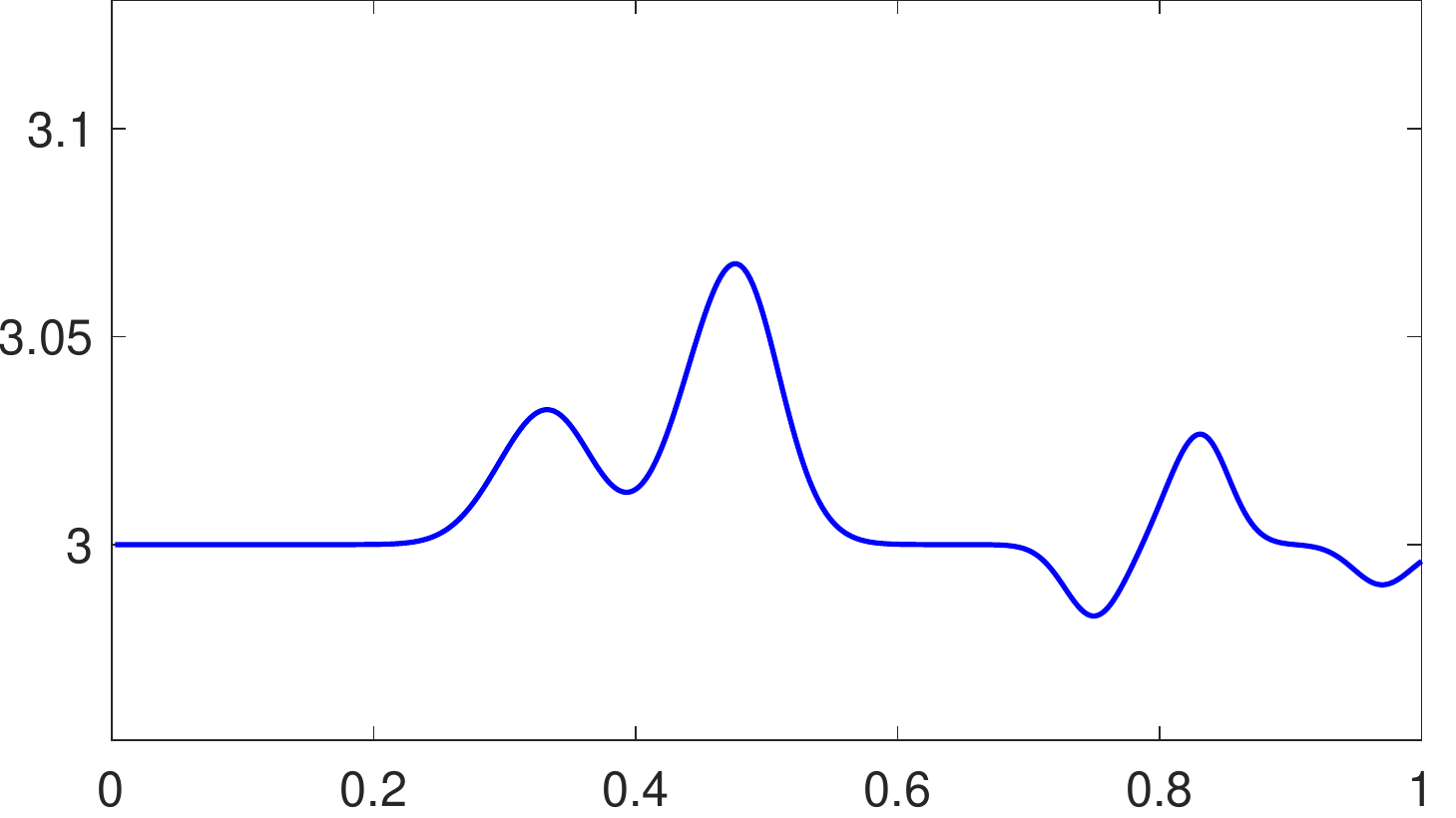}
\subcaption*{$u$ for $t=0.5000$}
\end{subfigure}\\[1ex]
\begin{subfigure}[b]{0.49\textwidth}
\includegraphics[width=\textwidth]{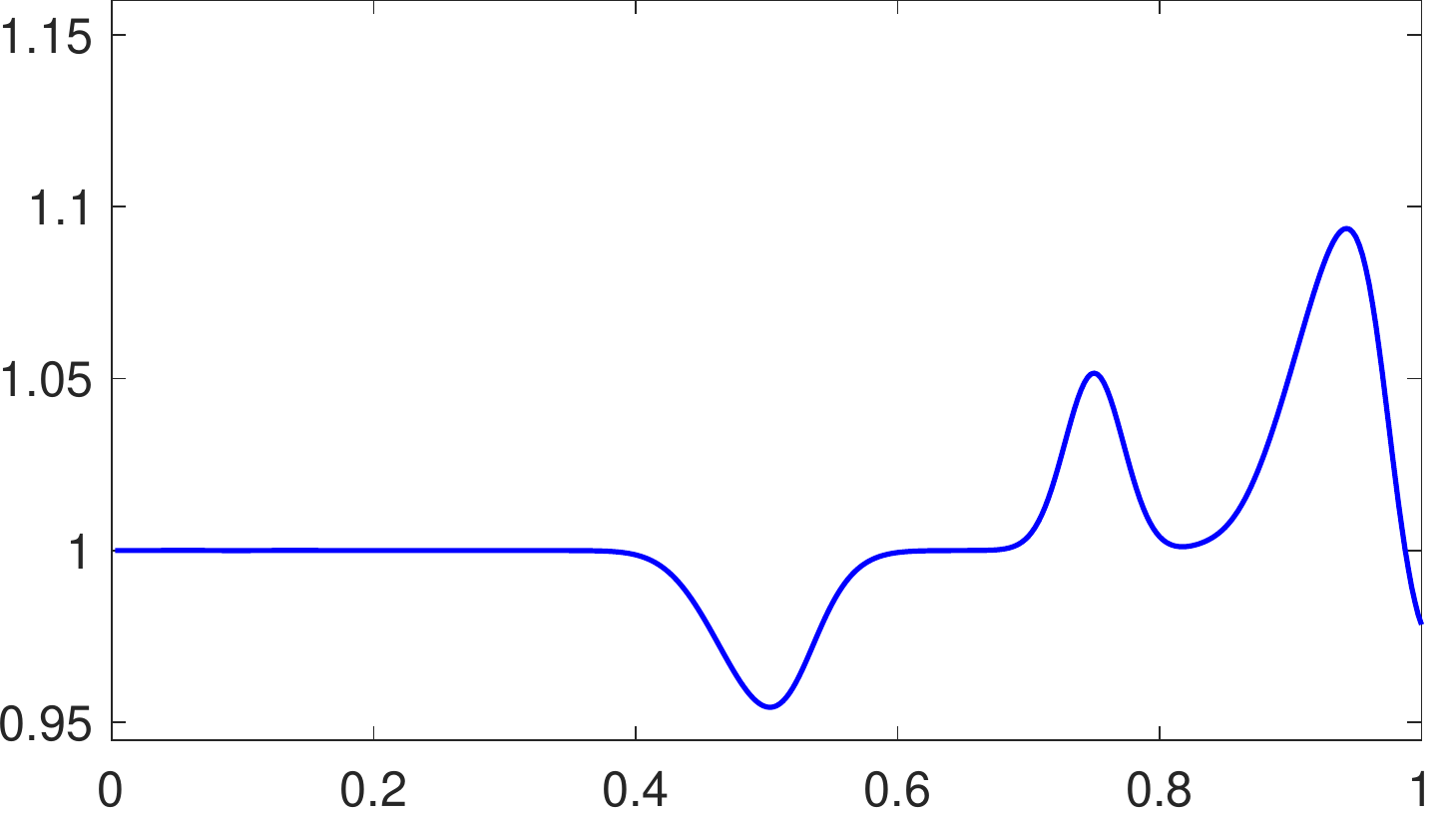}
\subcaption{$\eta$ for $t=0.1542$}
\end{subfigure}\hspace{.01\textwidth}
\begin{subfigure}[b]{0.49\textwidth}
\includegraphics[width=\textwidth]{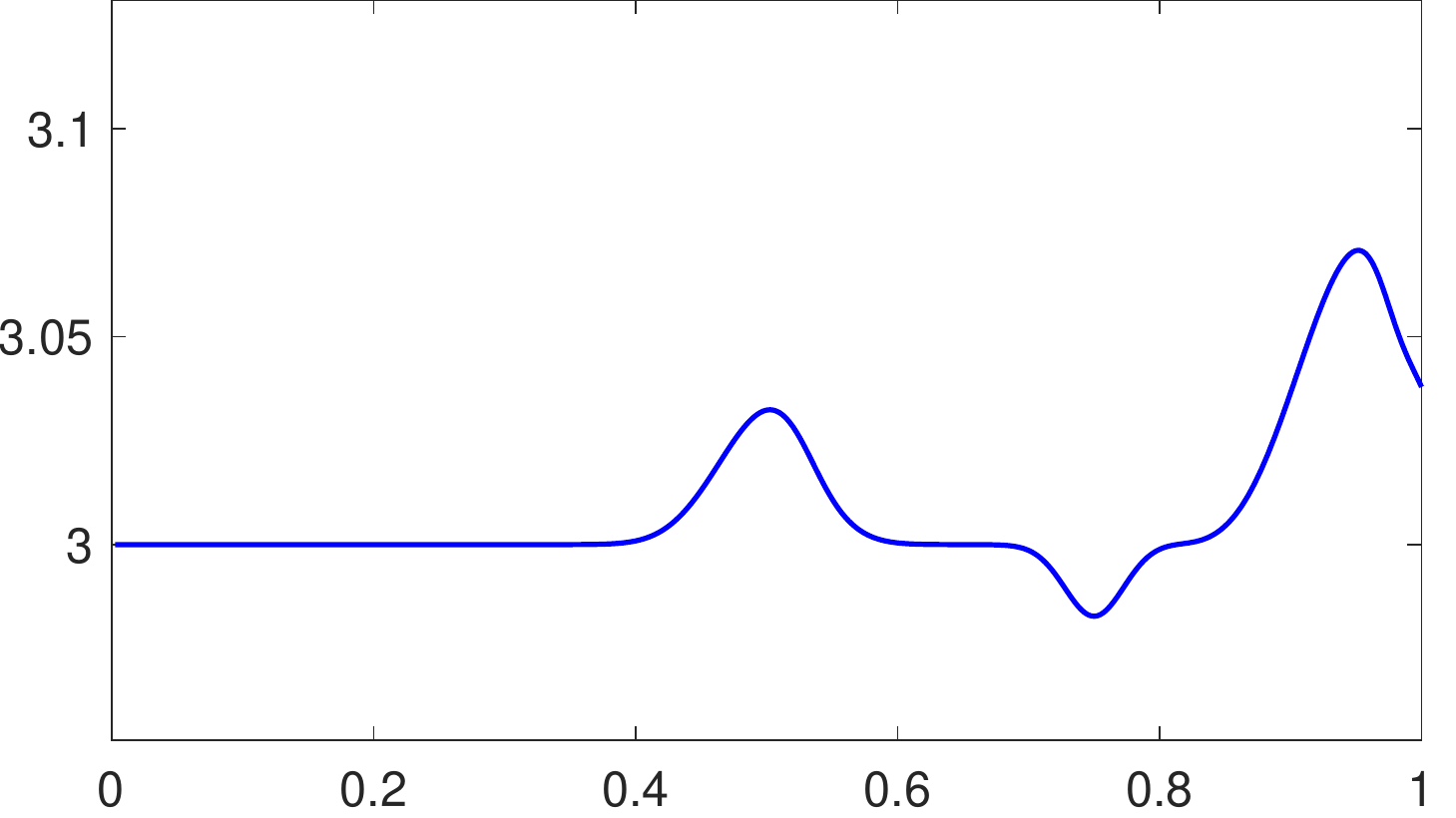}
\subcaption*{$u$ for $t=0.1542$}
\end{subfigure}\\[1ex]
\begin{subfigure}[b]{0.49\textwidth}
\includegraphics[width=\textwidth]{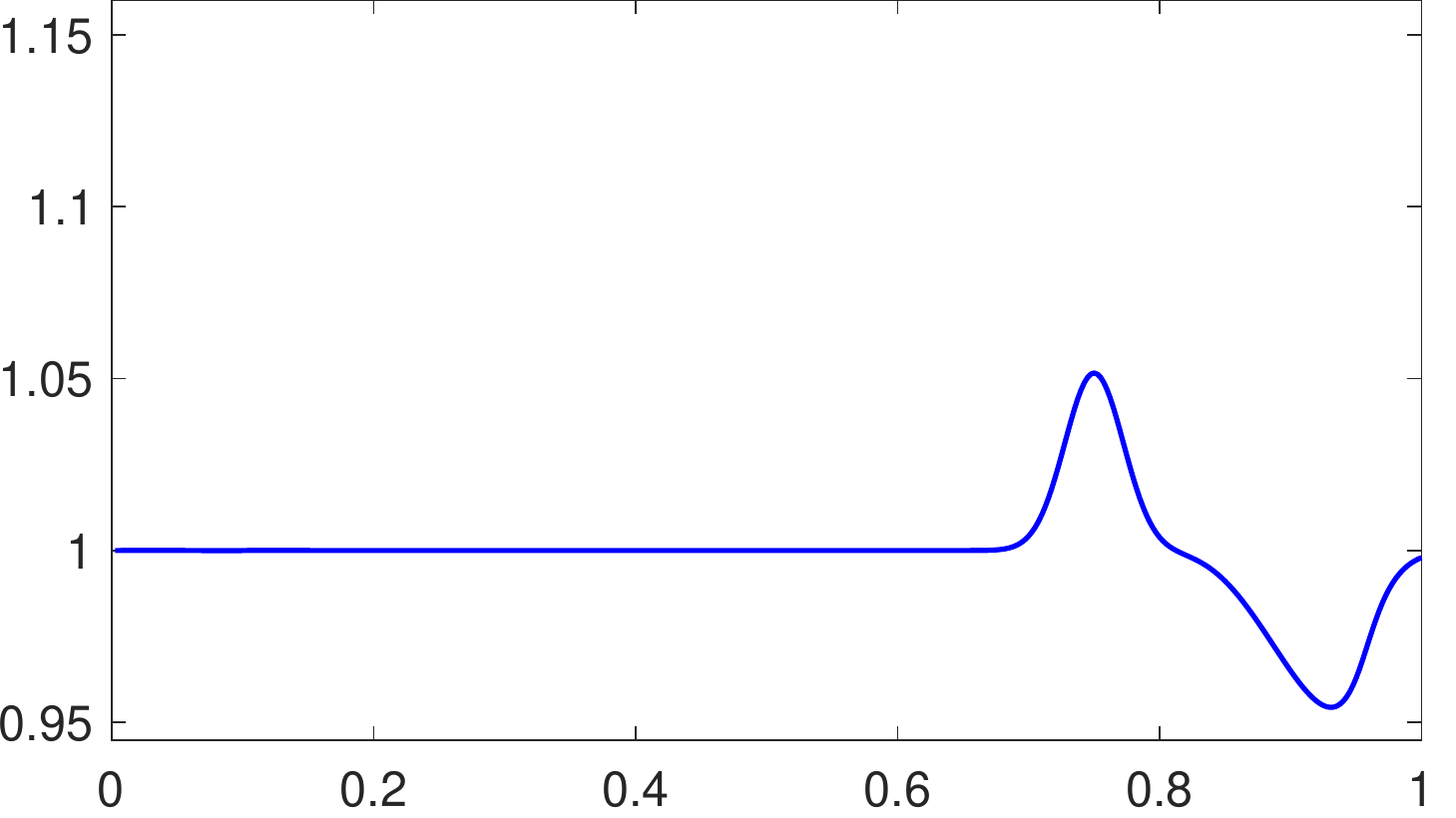}
\subcaption{$\eta$ for $t=0.4167$}
\end{subfigure}\hspace{.01\textwidth}
\begin{subfigure}[b]{0.49\textwidth}
\includegraphics[width=\textwidth]{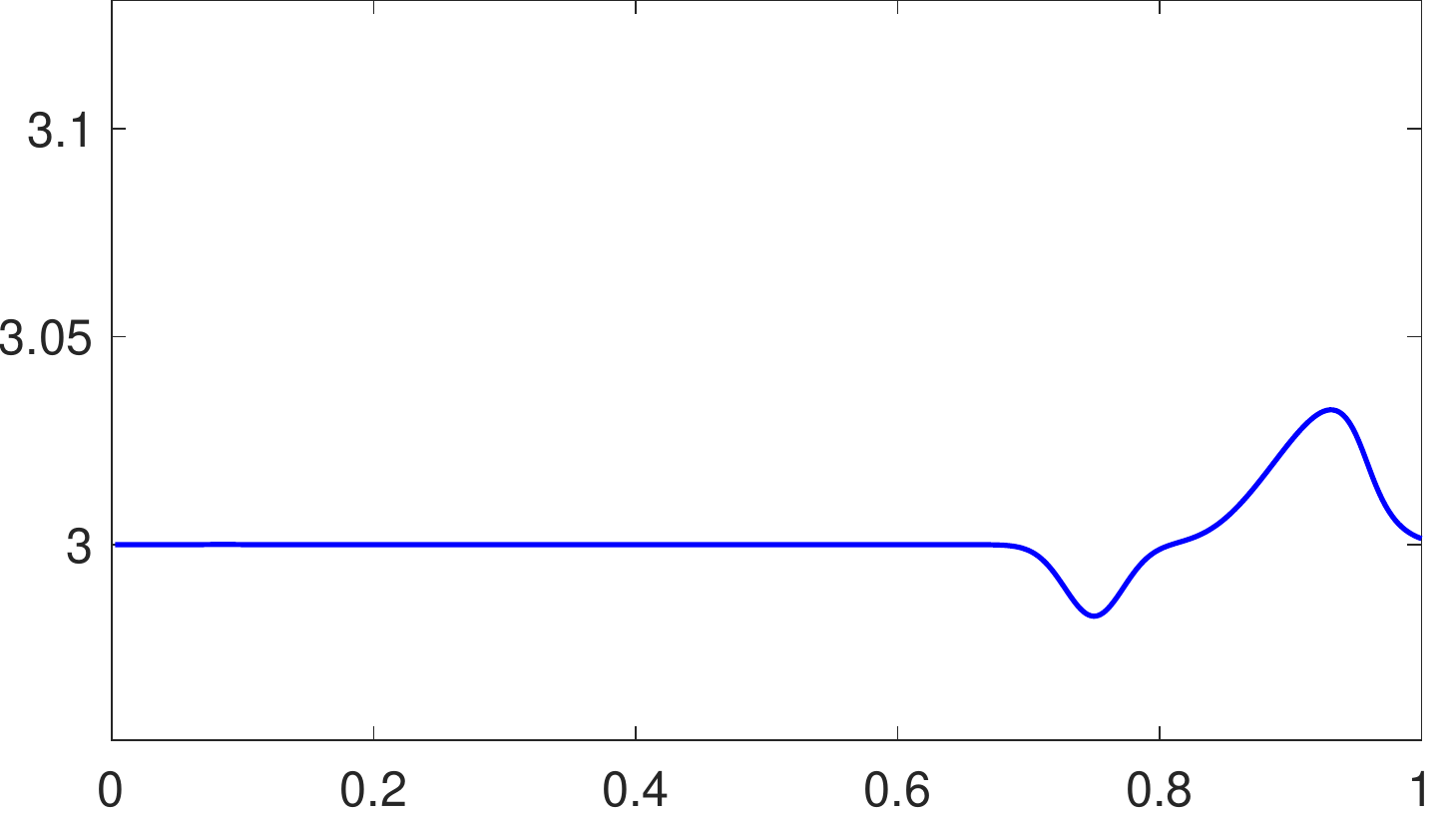}
\subcaption*{$u$ for $t=0.4167$}
\end{subfigure}\\[1ex]
\begin{subfigure}[b]{0.49\textwidth}
\includegraphics[width=\textwidth]{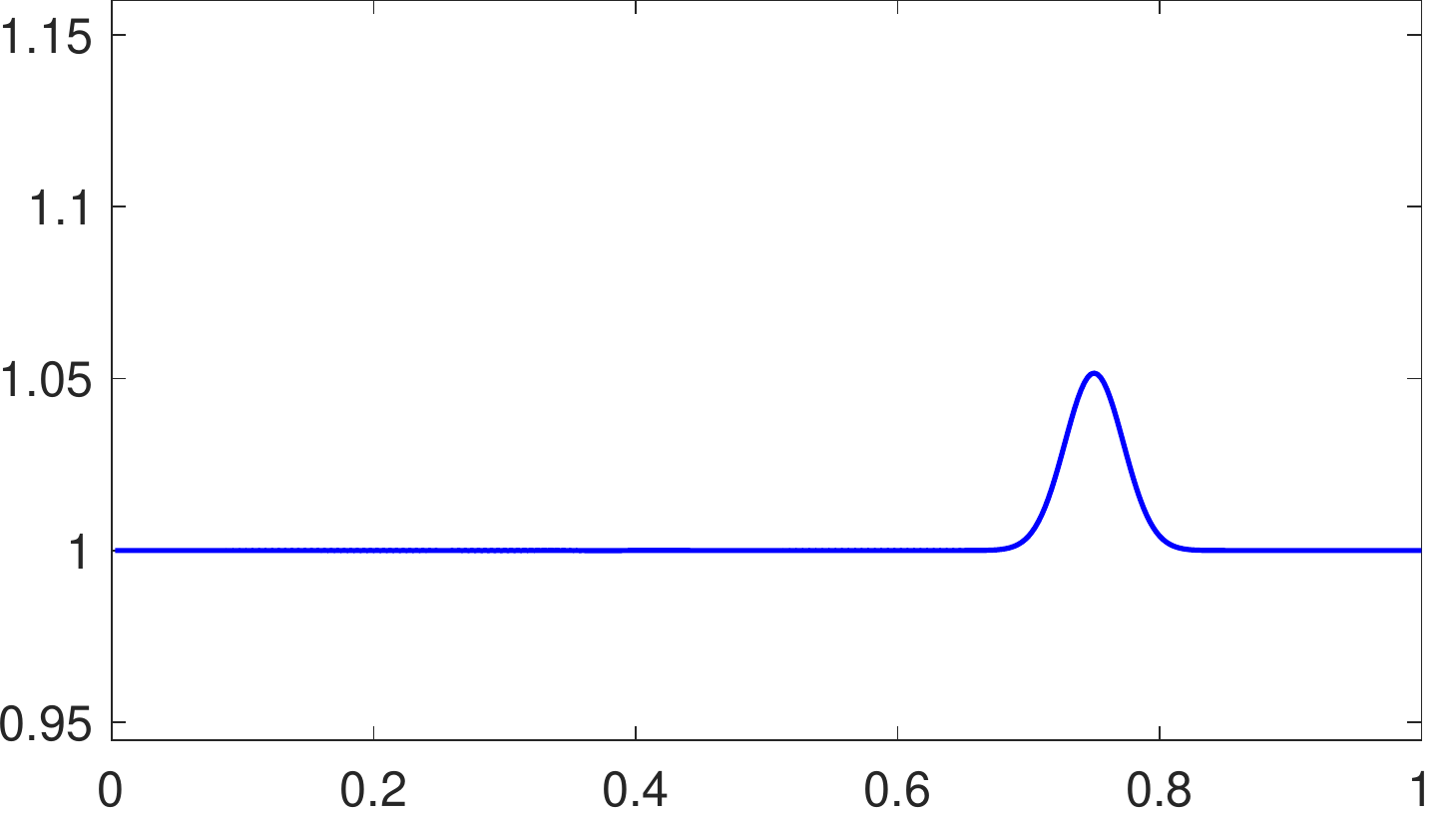}
\subcaption{$\eta$ for $t=0.6000$ \label{fig:2d}}
\end{subfigure}\hspace{.01\textwidth}
\begin{subfigure}[b]{0.49\textwidth}
\includegraphics[width=\textwidth]{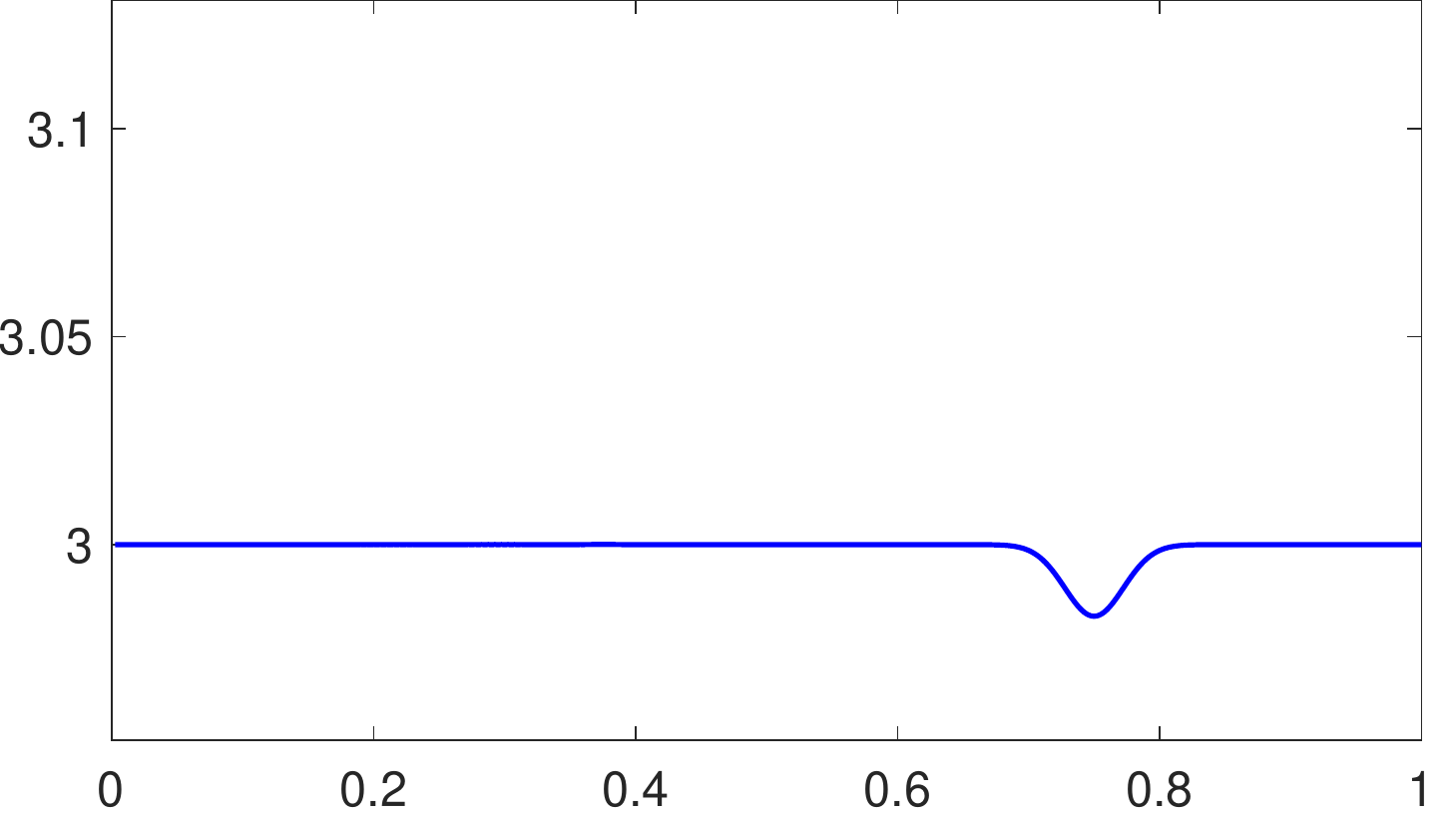}
\subcaption*{$u$ for $t=0.6000$}
\end{subfigure}\\[1ex]
\caption{Evolution with data \eqref{eq:3p6}, supercritical case, $r=2$, $h=1/400$, 
$k=h/3$ \label{fig:2}} \vspace{-1ex} 
\end{figure}

We now present some analogous results in the \emph{subcritical} case . We used 
the fully discrete scheme with spatial discretization given by 
\eqref{eq:2p46}--\eqref{eq:2p48}, \eqref{eq:2p50}; the variables depicted in the 
figures are the approximations of $\eta$ and $u$. The spatial discretization was 
effected on $[0,1]$ with piecewise linear functions on a uniform mesh with 
$h=1/2000$; the time-stepping procedure was RK4 as usual with $k=h/10$. In the 
first example we took $\eta_0=1$, $u_0=1$ and 
\begin{equation} \label{eq:3p7} 
\begin{aligned} 
& \beta(x) = 1-0.04\exp(-100(x-0.5)^2), \\ 
& \eta^0(x) = \eta_0,\quad u^0(x) = u_0. 
\end{aligned} 
\end{equation} 
The ensuing evolution of the solution is shown in Figure \ref{fig:3}. The 
generated wave interacts with the bottom and forms pulses that exit without 
artificial oscillations at both ends of the boundary; the steady-state solution 
may be found analytically as before. When used as initial condition, its $L^2$ 
projection differed from the numerical solution at $t=2$ by an $L^2$-error of 
$\mathcal O(10^{-8})$ for this example. 

\begin{figure}[htbp]
\centering 

\begin{subfigure}[b]{0.49\textwidth}
\includegraphics[width=\textwidth]{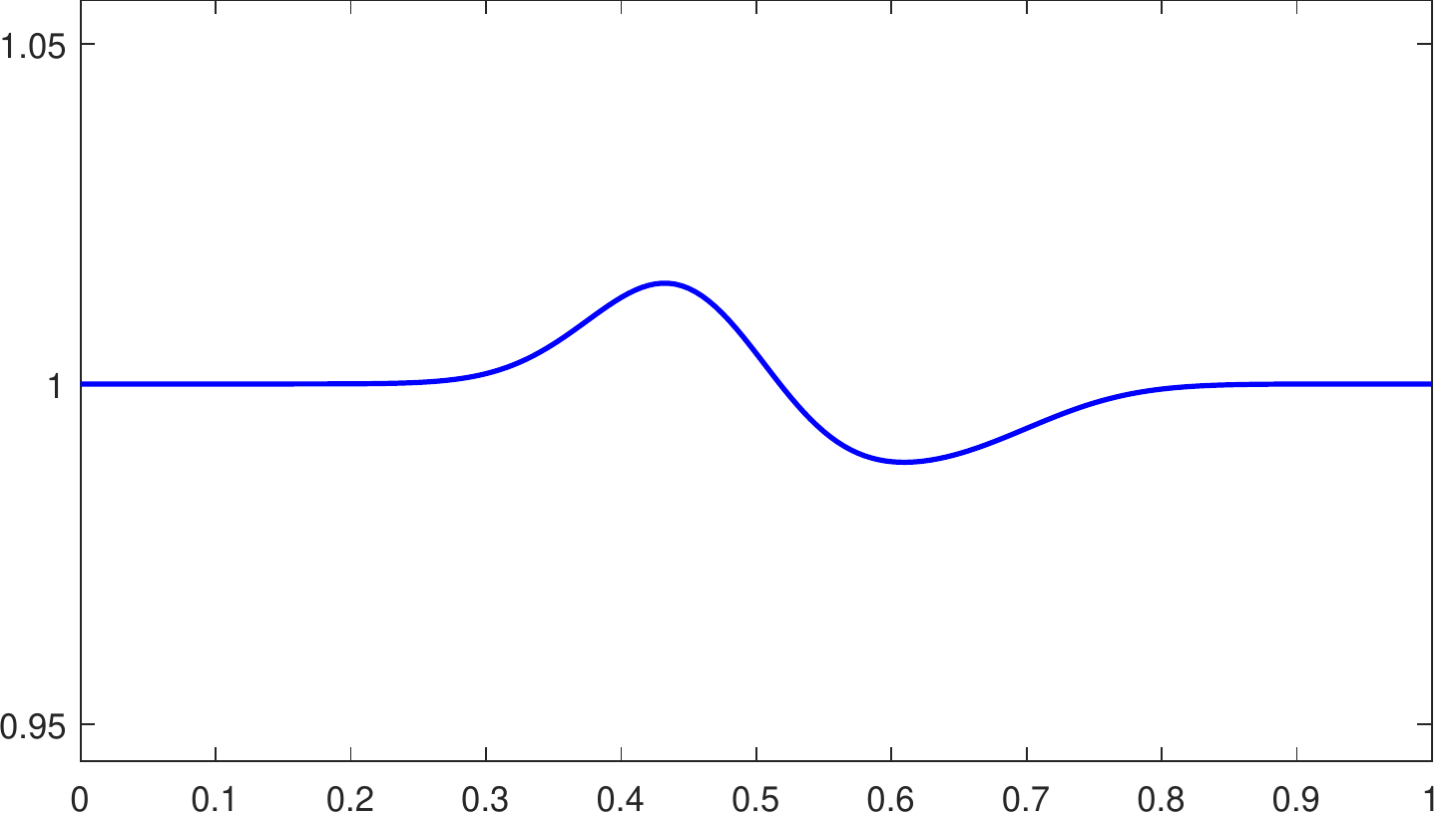}
\subcaption{$\eta$ for $t=0.06$}
\end{subfigure}\hspace{.01\textwidth}
\begin{subfigure}[b]{0.49\textwidth}
\includegraphics[width=\textwidth]{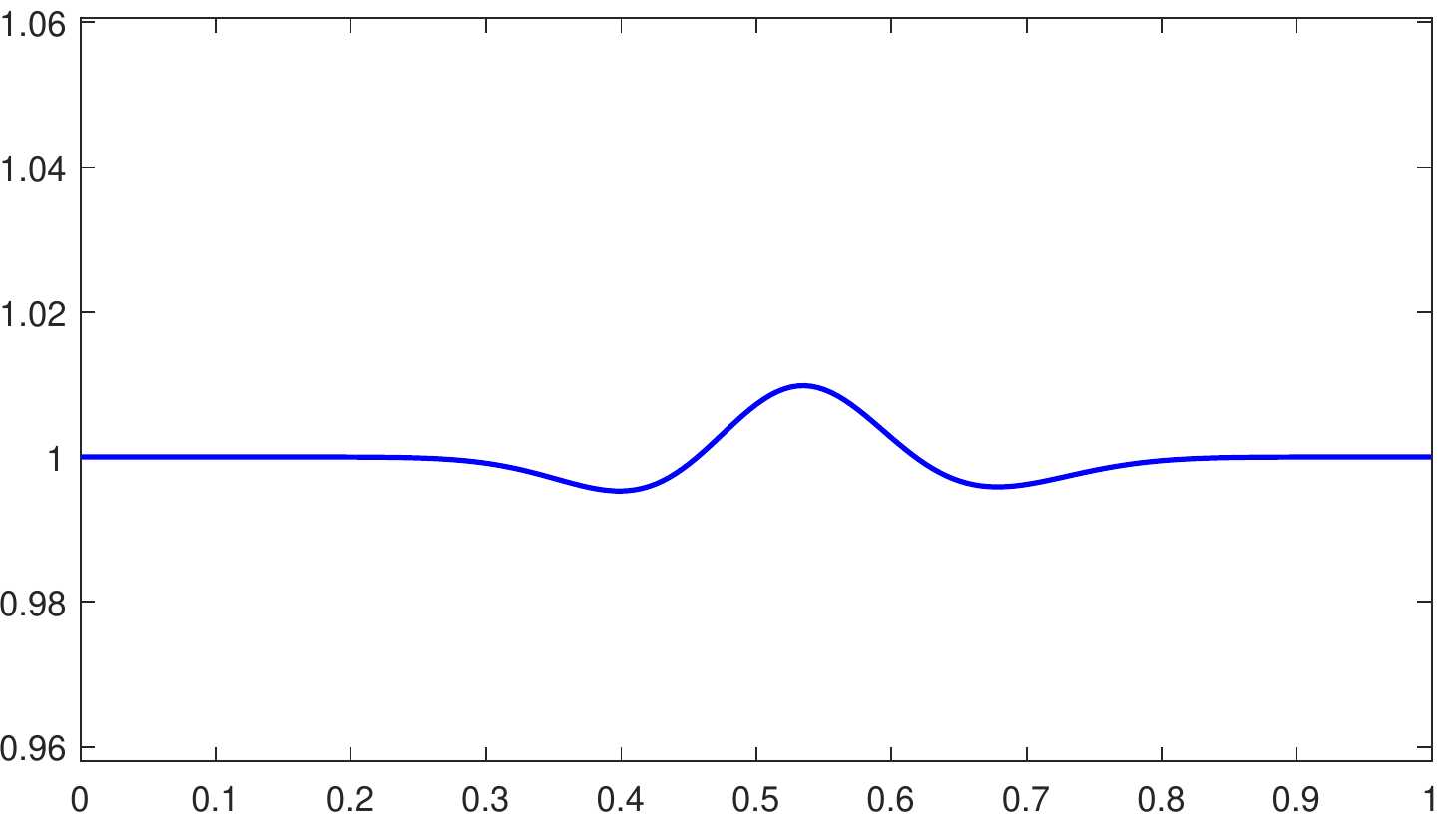}
\subcaption*{$u$ for $t=0.06$}
\end{subfigure}\\[1ex]
\begin{subfigure}[b]{0.49\textwidth}
\includegraphics[width=\textwidth]{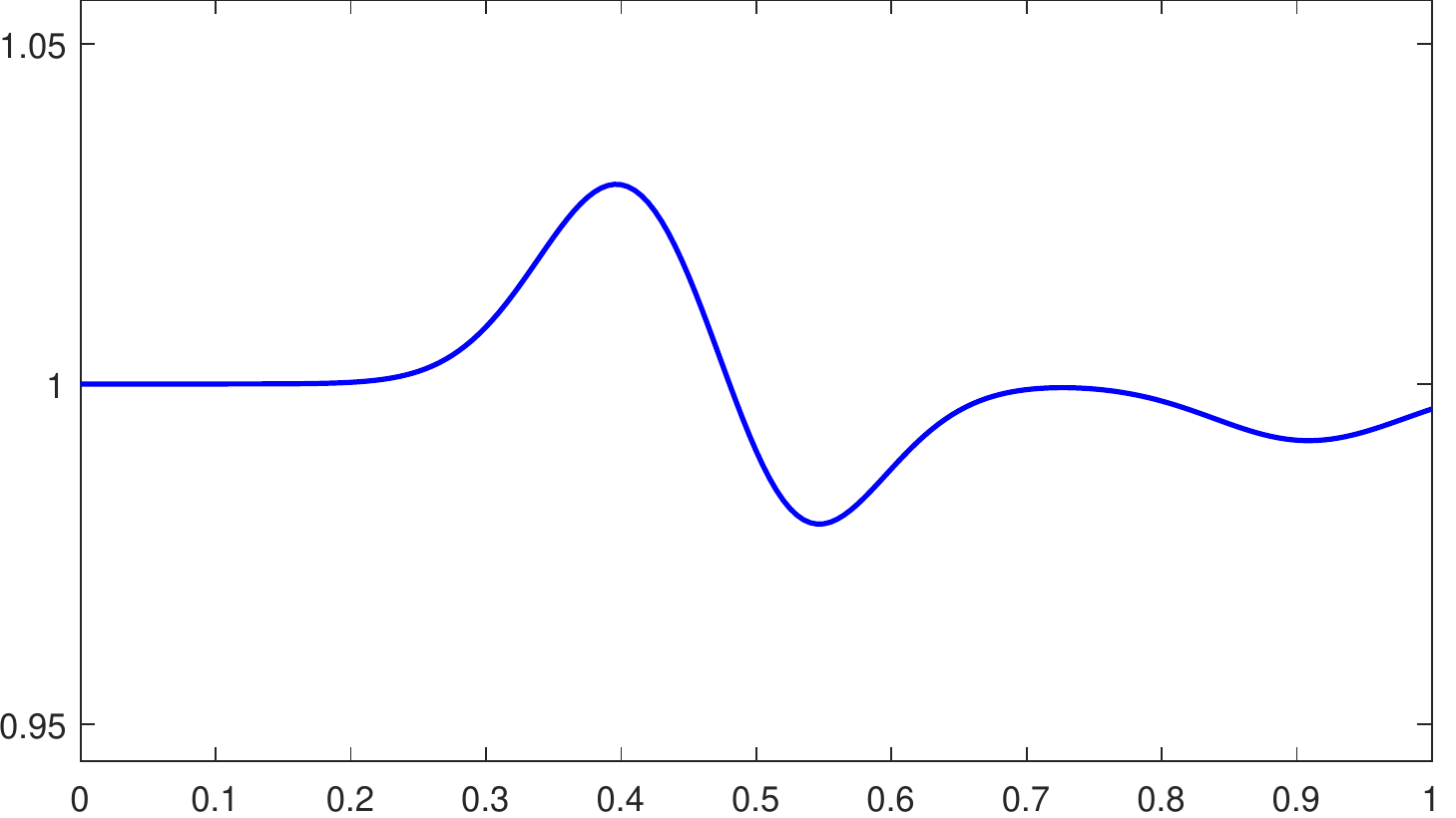}
\subcaption{$\eta$ for $t=0.17$}
\end{subfigure}\hspace{.01\textwidth}
\begin{subfigure}[b]{0.49\textwidth}
\includegraphics[width=\textwidth]{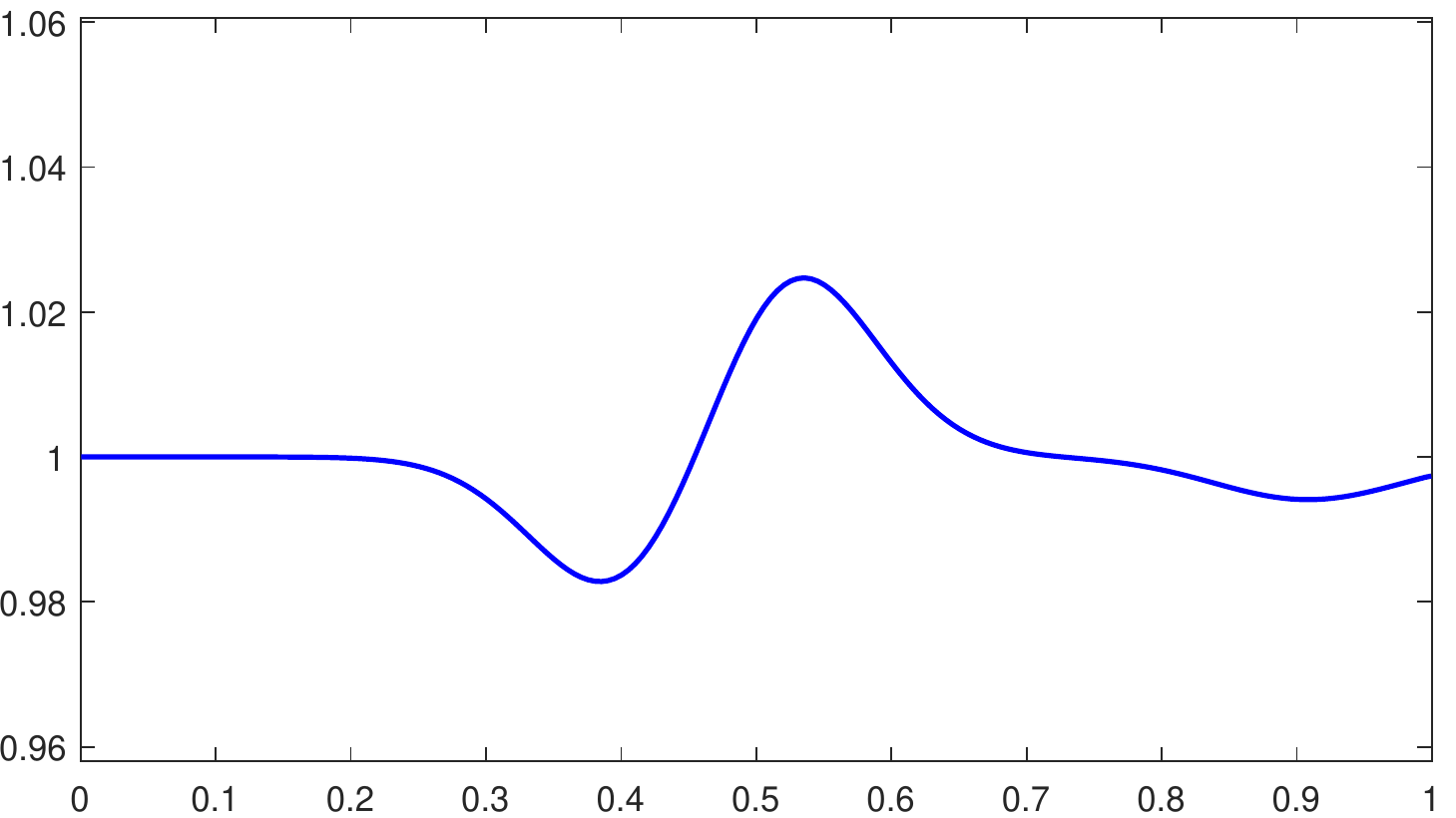}
\subcaption*{$u$ for $t=0.17$}
\end{subfigure}\\[1ex]
\begin{subfigure}[b]{0.49\textwidth}
\includegraphics[width=\textwidth]{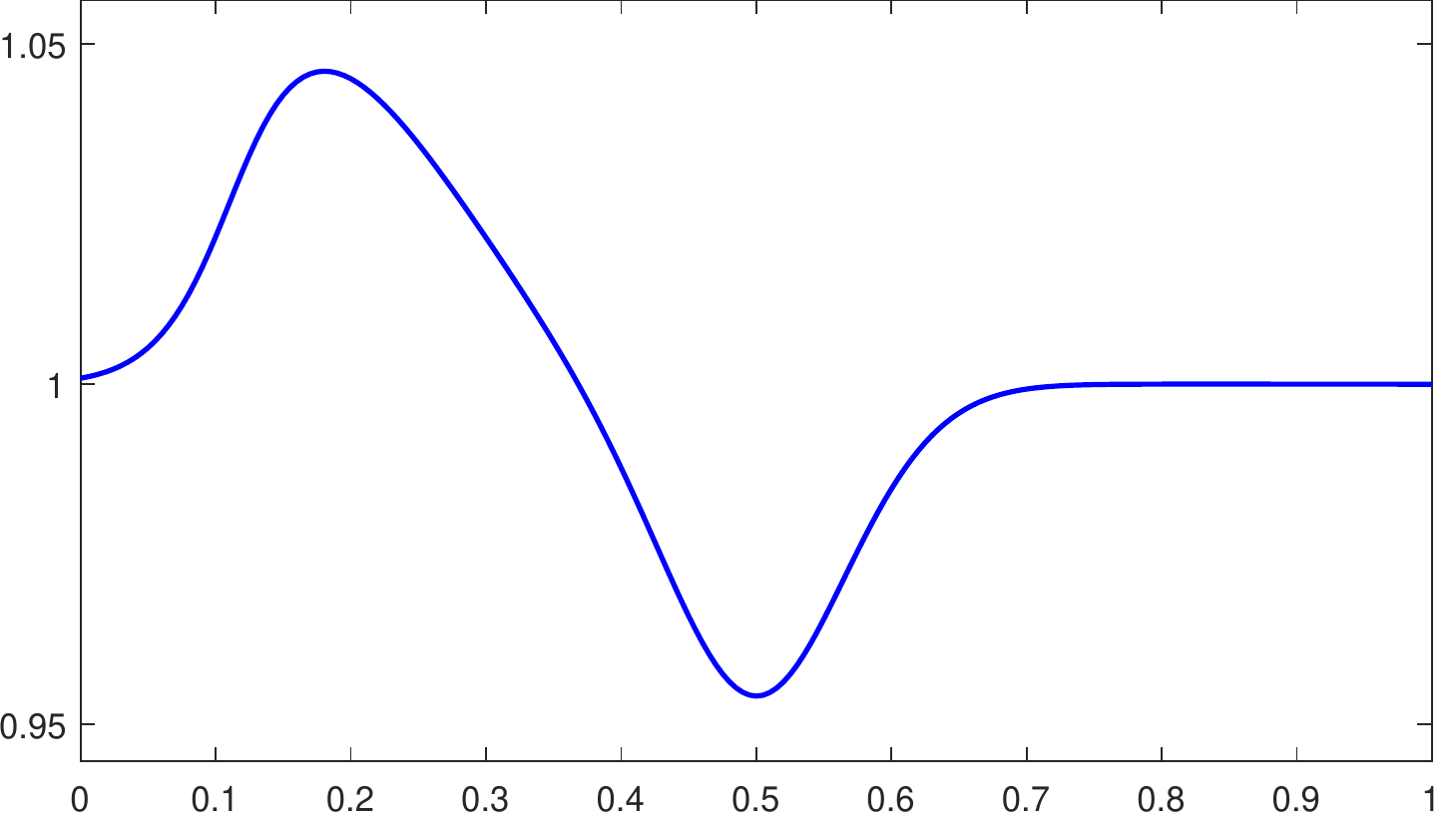}
\subcaption{$\eta$ for $t=0.72$}
\end{subfigure}\hspace{.01\textwidth}
\begin{subfigure}[b]{0.49\textwidth}
\includegraphics[width=\textwidth]{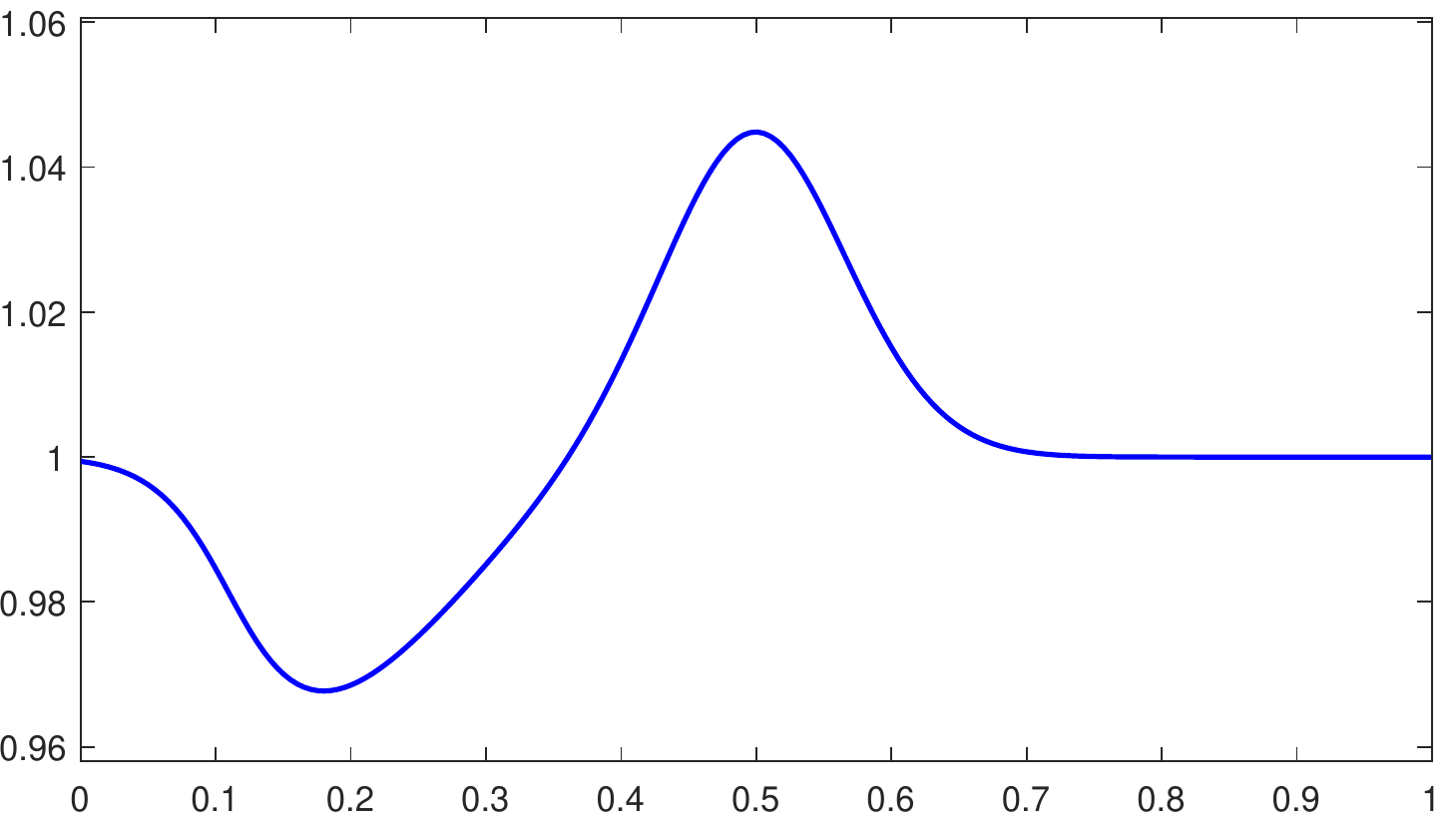}
\subcaption*{$u$ for $t=0.72$}
\end{subfigure}\\[1ex]
\begin{subfigure}[b]{0.49\textwidth}
\includegraphics[width=\textwidth]{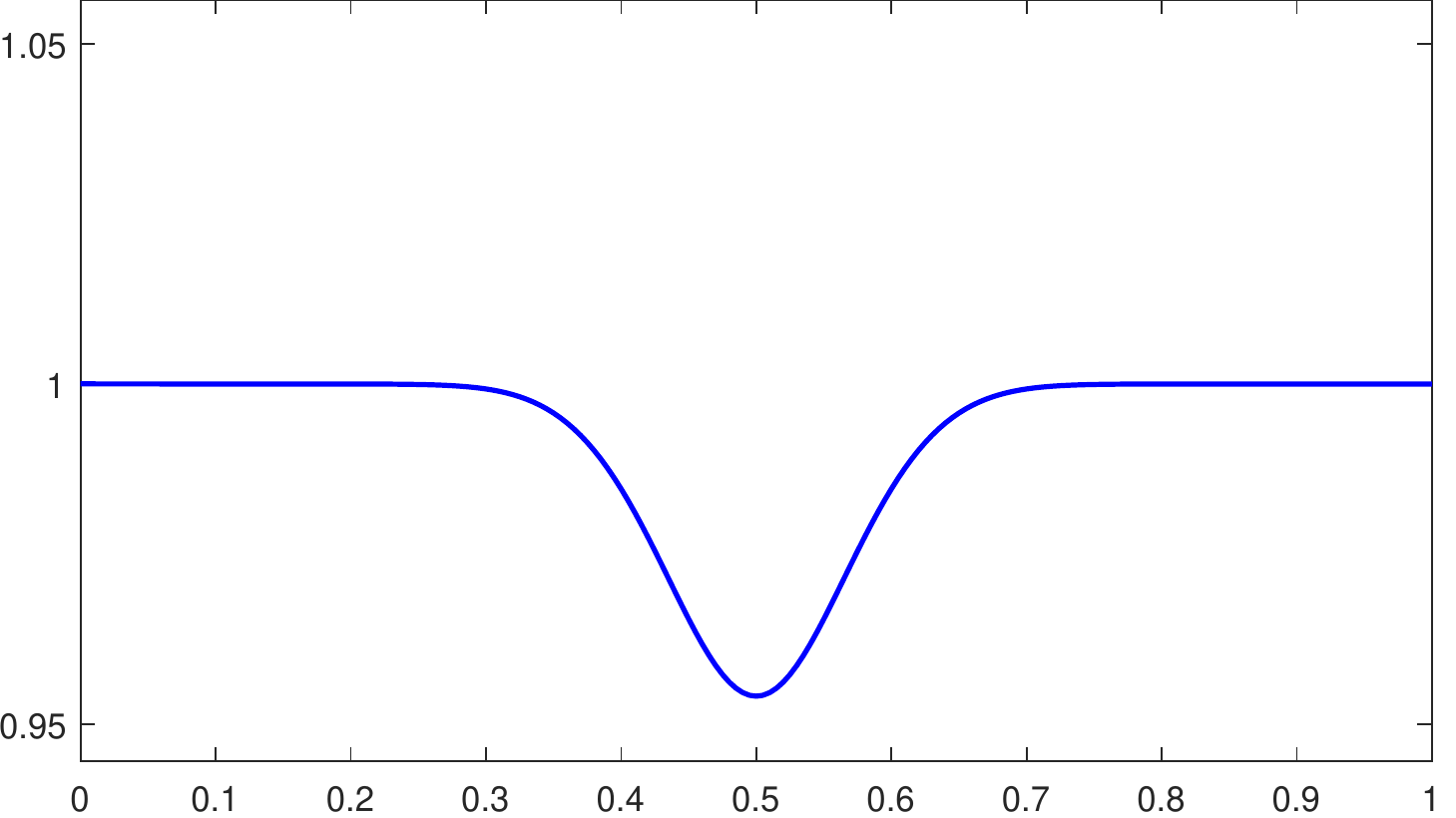}
\subcaption{$\eta$ for $t=2$}
\end{subfigure}\hspace{.01\textwidth}
\begin{subfigure}[b]{0.49\textwidth}
\includegraphics[width=\textwidth]{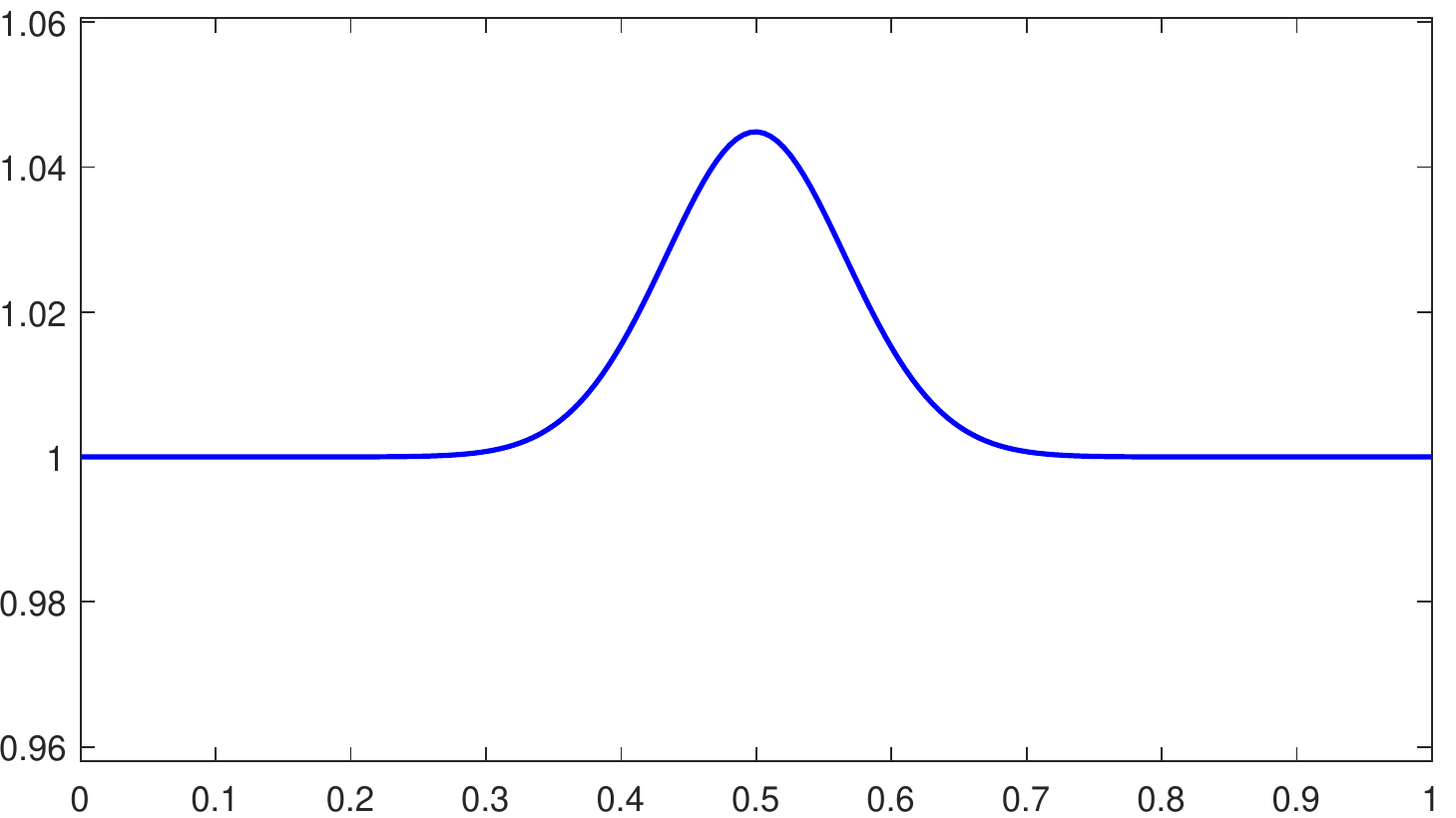}
\subcaption*{$u$ for $t=2$}
\end{subfigure}\\[1ex]
\caption{Evolution with data \eqref{eq:3p7}, subcritical case, $r=2$, 
$h=1/2000$, 
$k=h/10$ \label{fig:3}} \vspace{-1ex} 
\end{figure}

\begin{figure}[htbp]
\centering 

\begin{subfigure}[b]{0.49\textwidth}
\includegraphics[width=\textwidth]{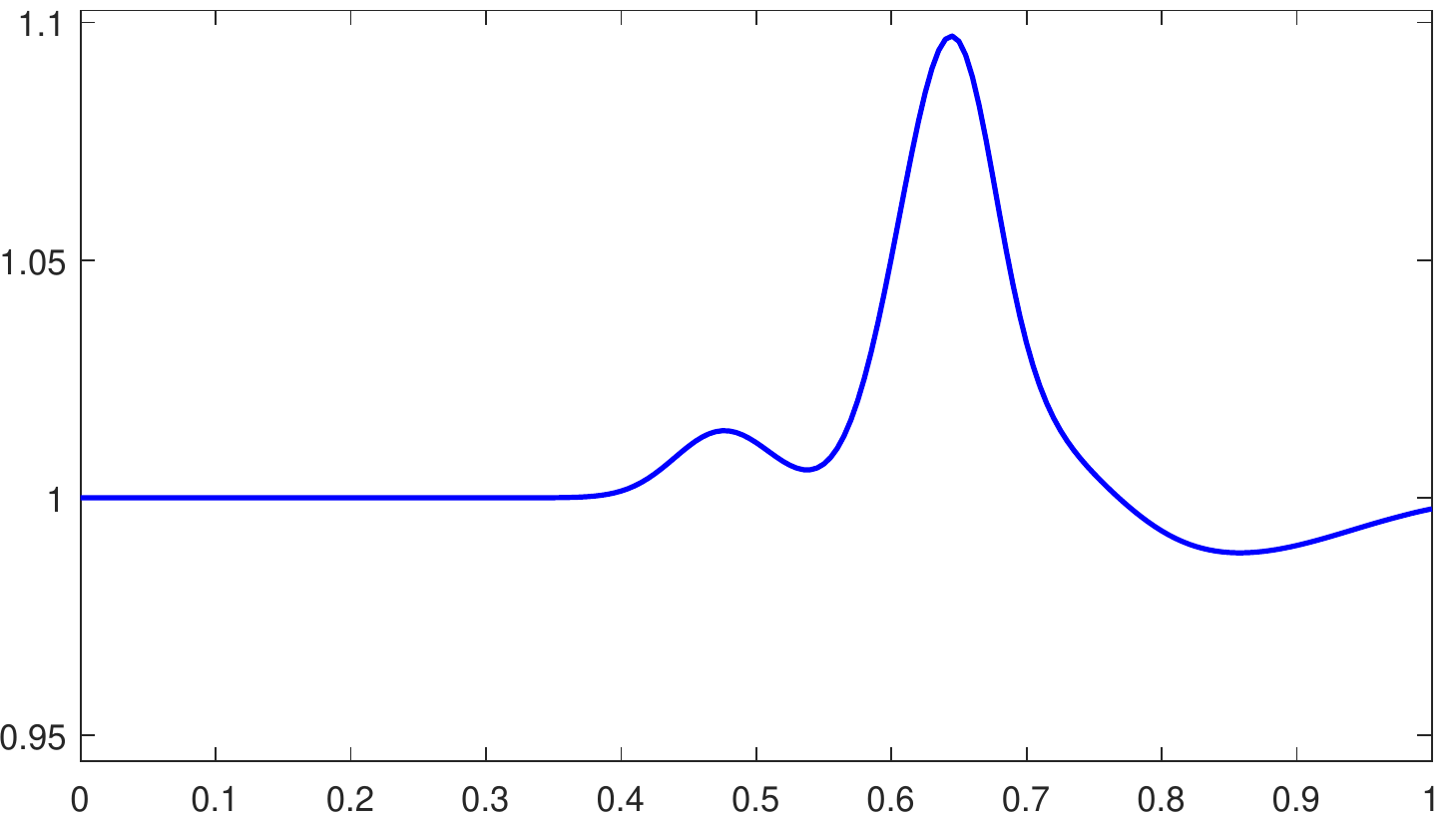}
\subcaption{$\eta$ for $t=0.057$}
\end{subfigure}\hspace{.01\textwidth}
\begin{subfigure}[b]{0.49\textwidth}
\includegraphics[width=\textwidth]{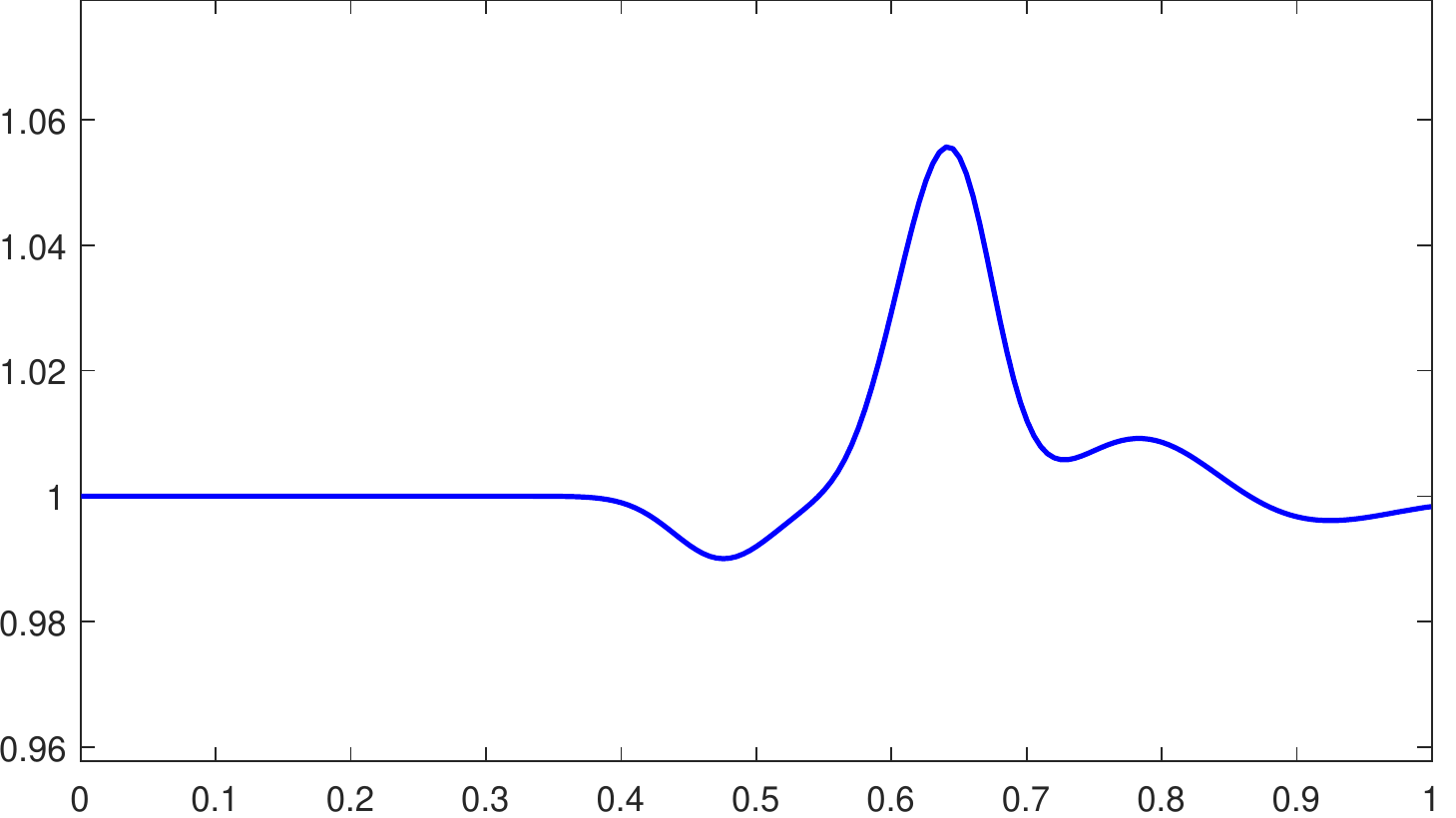}
\subcaption*{$u$ for $t=0.057$}
\end{subfigure}\\[1ex]
\begin{subfigure}[b]{0.49\textwidth}
\includegraphics[width=\textwidth]{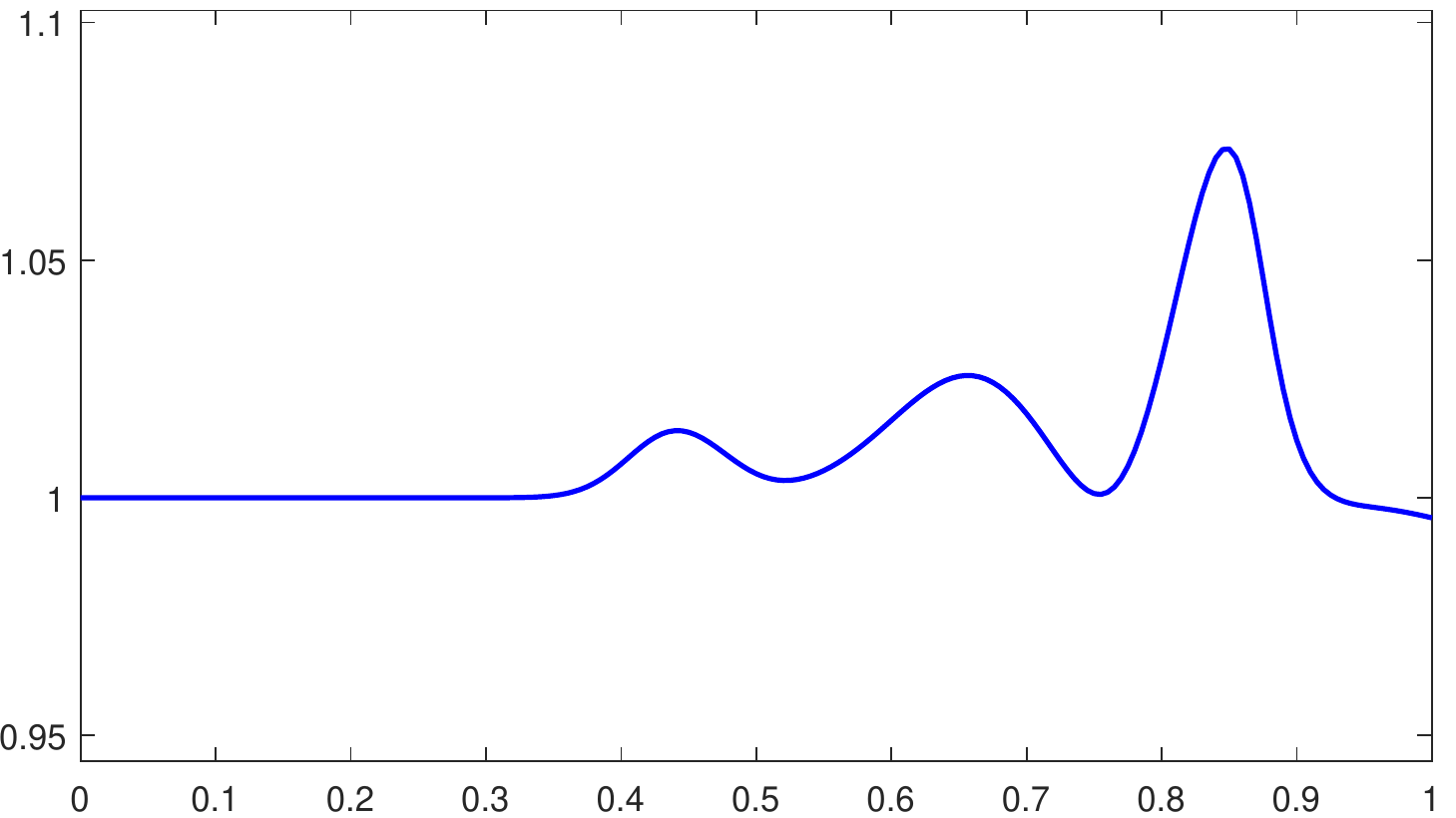}
\subcaption{$\eta$ for $t=0.138$}
\end{subfigure}\hspace{.01\textwidth}
\begin{subfigure}[b]{0.49\textwidth}
\includegraphics[width=\textwidth]{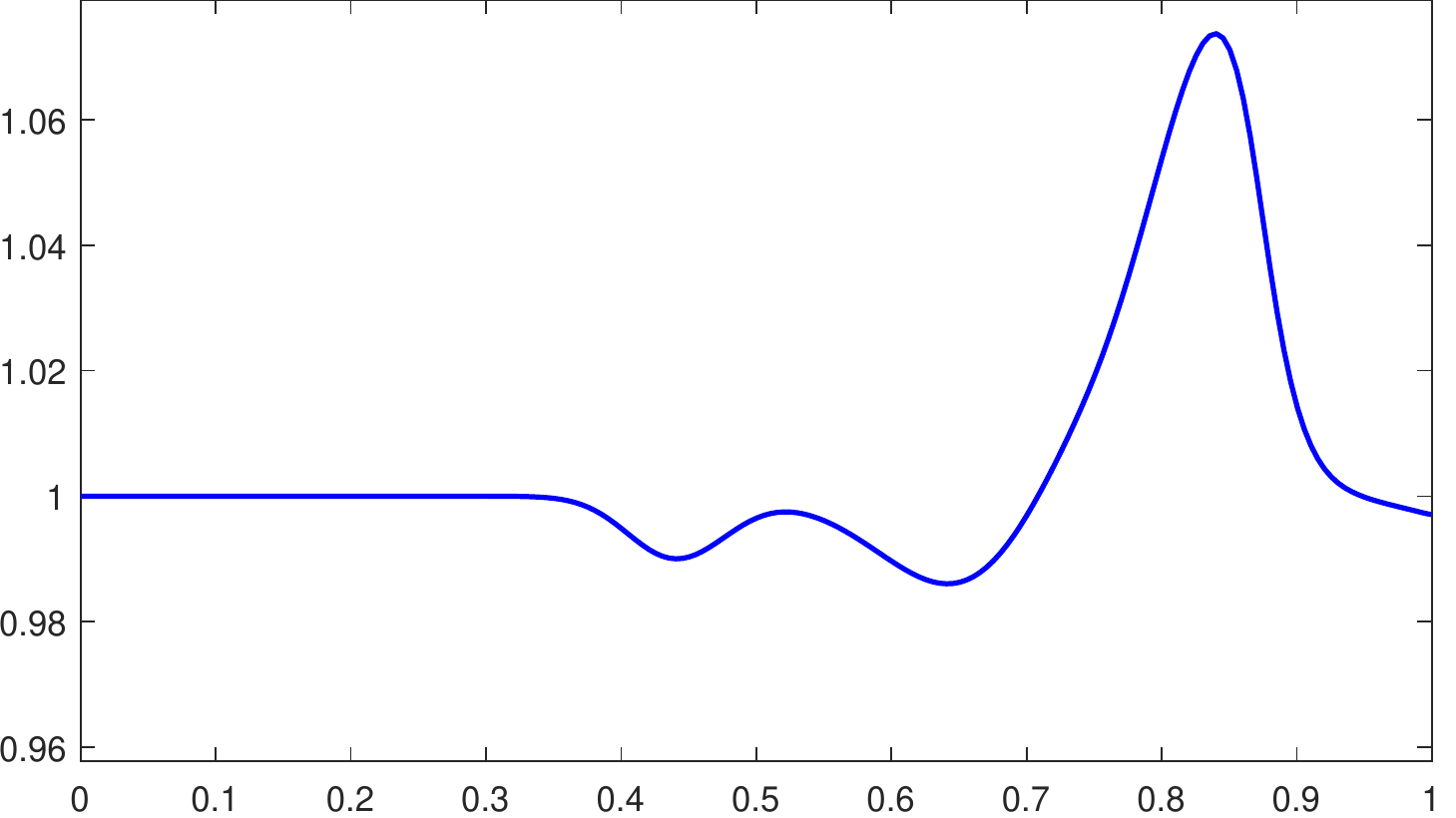}
\subcaption*{$u$ for $t=0.138$}
\end{subfigure}\\[1ex]
\begin{subfigure}[b]{0.49\textwidth}
\includegraphics[width=\textwidth]{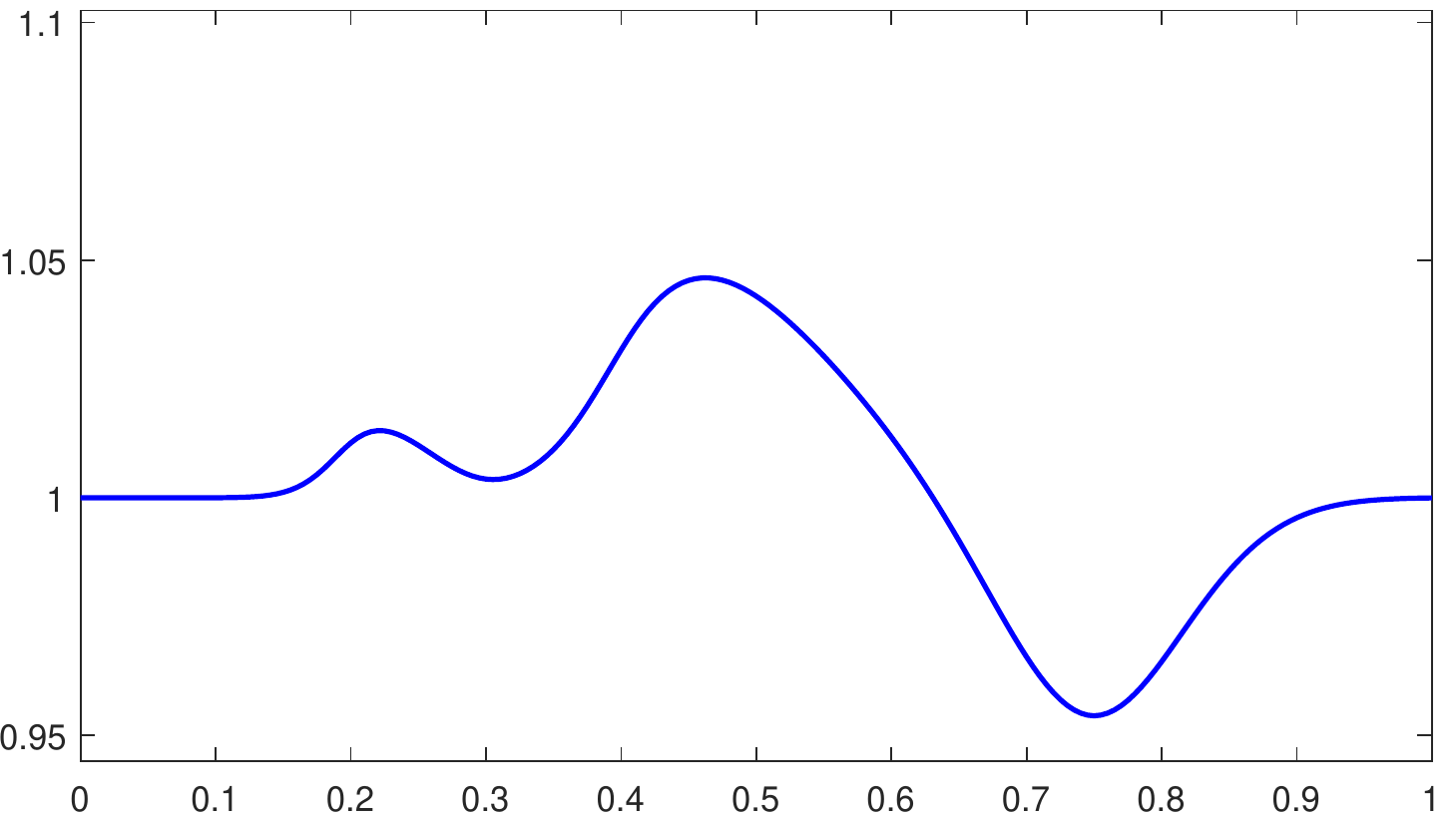}
\subcaption{$\eta$ for $t=0.651$}
\end{subfigure}\hspace{.01\textwidth}
\begin{subfigure}[b]{0.49\textwidth}
\includegraphics[width=\textwidth]{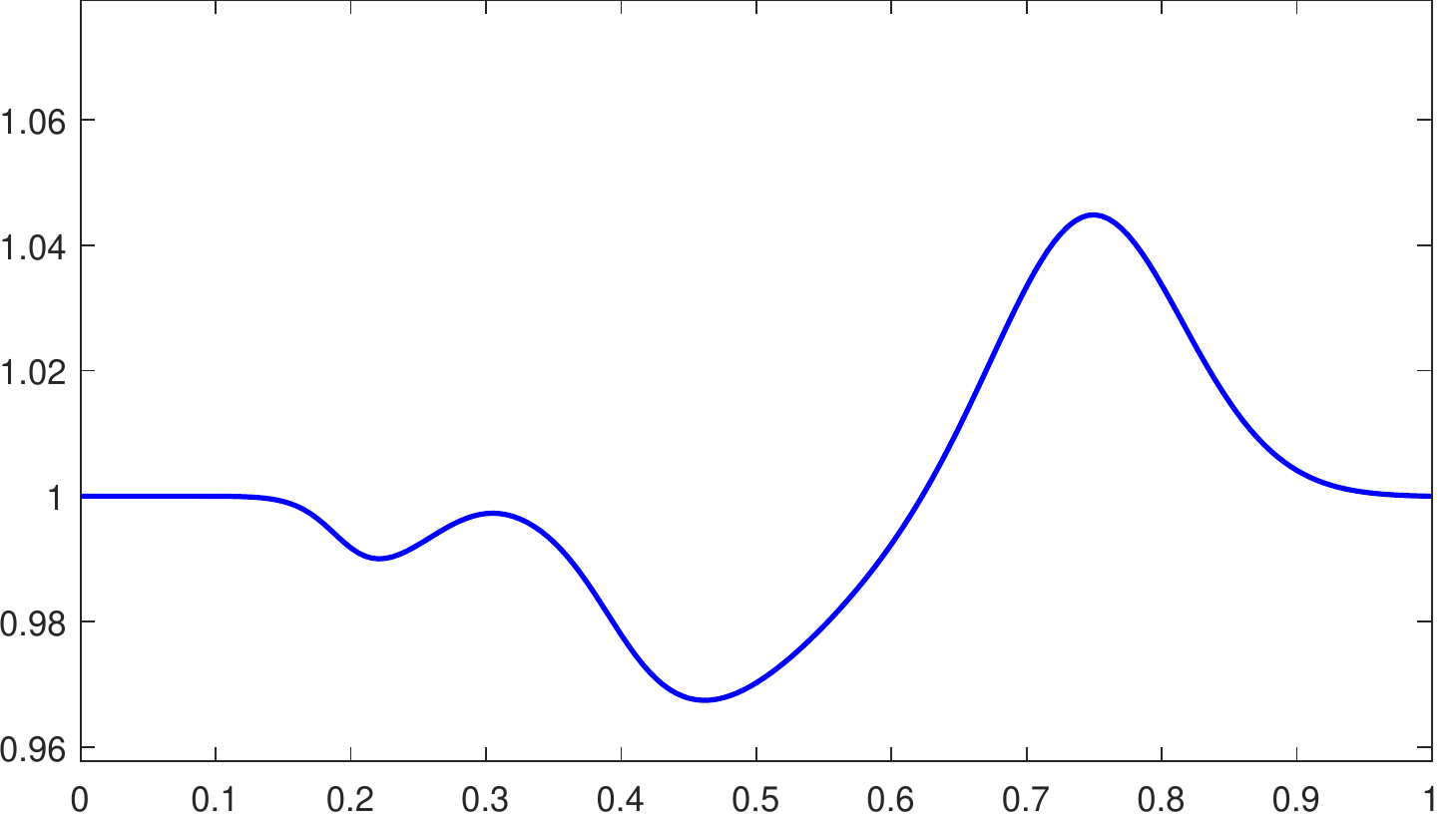}
\subcaption*{$u$ for $t=0.651$}
\end{subfigure}\\[1ex]
\begin{subfigure}[b]{0.49\textwidth}
\includegraphics[width=\textwidth]{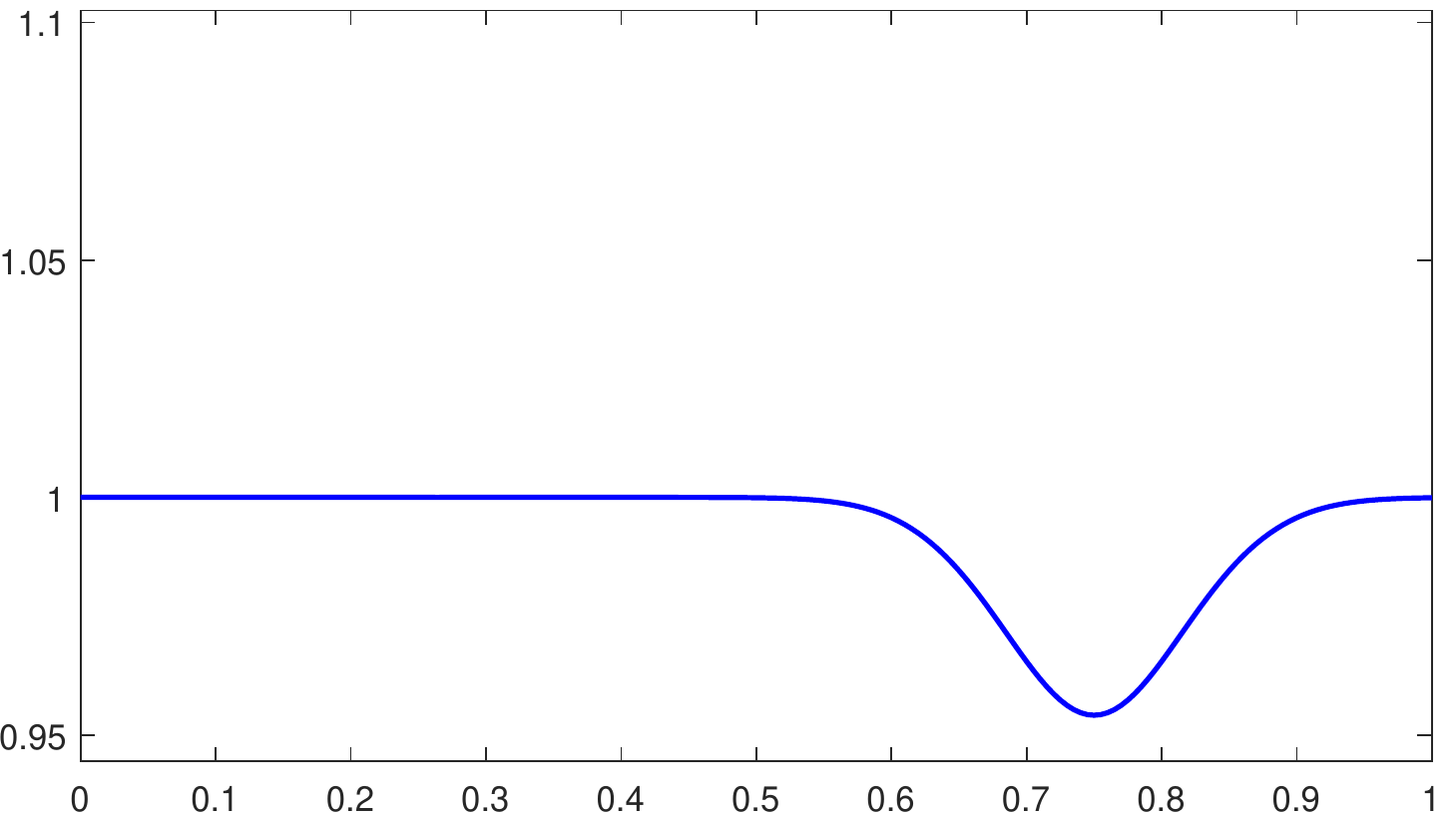}
\subcaption{$\eta$ for $t=3$ (steady state)}
\end{subfigure}\hspace{.01\textwidth}
\begin{subfigure}[b]{0.49\textwidth}
\includegraphics[width=\textwidth]{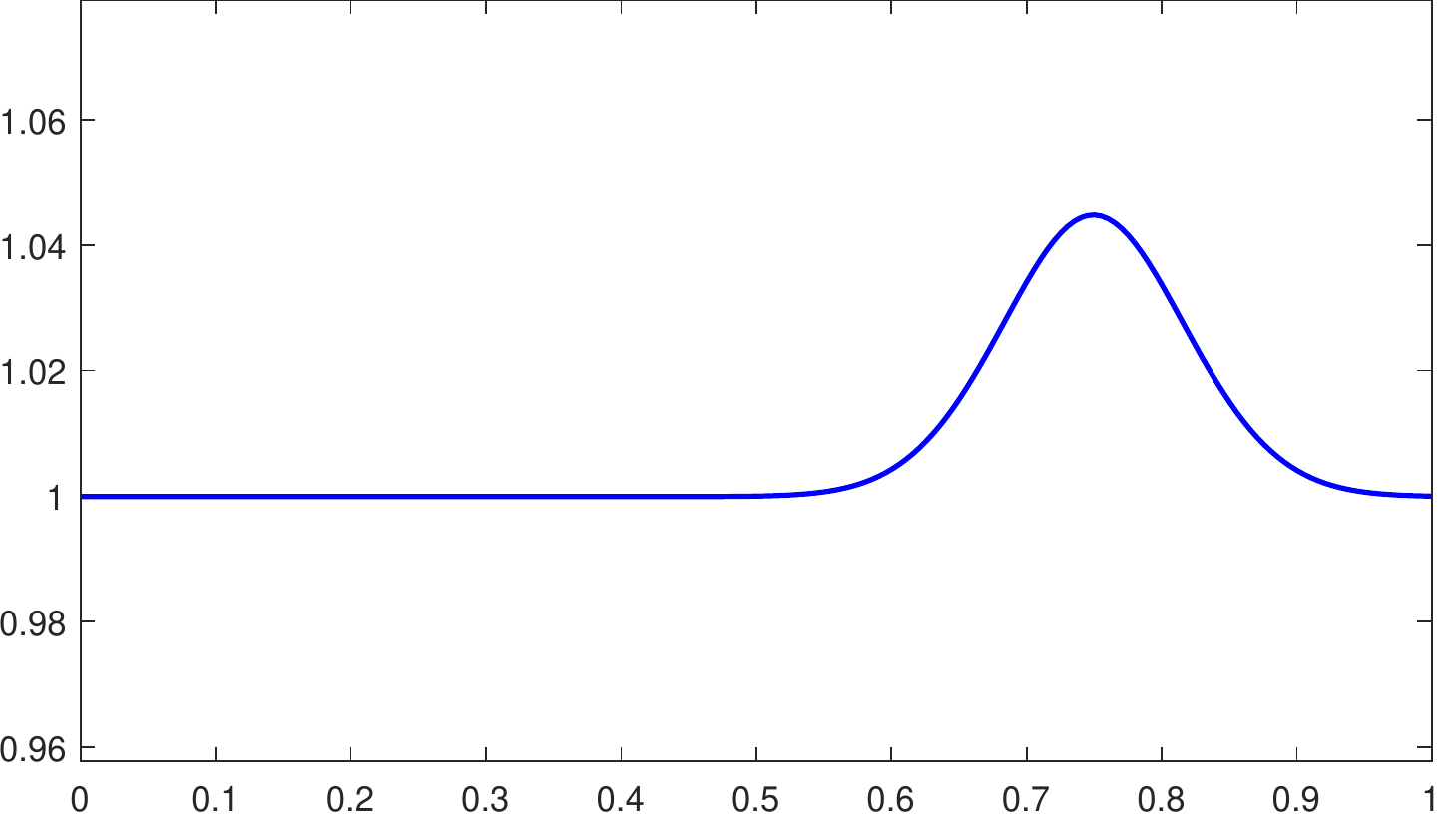}
\subcaption*{$u$ for $t=3$ (steady state)}
\end{subfigure}\\[1ex]
\caption{Evolution with data \eqref{eq:3p8}, subcritical case, $r=2$, 
$h=1/2000$, 
$k=h/10$ \label{fig:4}} \vspace{-1ex} 
\end{figure}

An example of subcritical flow with variable initial conditions is shown in 
Figure \ref{fig:4}, where we took $\eta_0=u_0=1$, and 
\begin{equation} \label{eq:3p8} 
\begin{aligned} 
& \beta(x) = 1-0.04\exp(-100(x-0.75)^2), \\ 
& \eta^0(x) = 0.05\exp(-400(x-0.5)^2)+\eta_0, \\ 
& u^0(x) = 0.1\exp(-400(x-0.5)^2)+u_0 
\end{aligned} 
\end{equation} 
and integrated with $h=1/2000$, $k=h/10$. A two-way wavetrain emerges and 
attains steady-state by $t=3$. 

We also tested the code in a few examples of the shallow water equations with 
absorbing (characteristic) boundary conditions, written in \emph{dimensional} 
form, i.e.\ as 
\begin{equation} \label{eq:3p9} 
\begin{aligned} 
& \eta_t + \left((\beta+\eta)u\right)_x = 0, \\ 
& u_t + g\eta_x + uu_x = 0, 
\end{aligned}\quad 
0\leq x\leq L,\ \ 0\leq t\leq T, 
\end{equation} 
with initial conditions $\eta(x,0)=\eta^0(x)$, $u(x,0)=u^0(x)$, $0\leq x\leq L$, 
and analogous characteristic boundary conditions in the super- and subcritical 
cases. (The Riemann invariants are now $u\pm \sqrt{g(\beta+\eta)}$, $g$ is the 
acceleration of gravity taken as $9.812\, \mathrm m/\mathrm s^2$, and the bottom 
is at $z=-\beta(x)$. If the bottom is horizontal it is located at $z=-h_0$; in 
the general case $h_0$ will be a typical depth.) 

As an example of \emph{supercritical} flow we considered a numerical experiment 
similar to the one described in Section 8.2 of \cite{Shiue}. Let $\widetilde 
\beta$ be the trapezoidal profile given by 
\begin{equation} \label{eq:3p10} 
\widetilde \beta(x) = 
\begin{cases} 
\displaystyle \frac{\delta_0}{c\kappa - \kappa/2}\left(x-\frac L 2 + 
c\kappa\right), & \text{if}\ \ -c\kappa \leq x-L/2\leq -\kappa/2, \\ 
\displaystyle \delta_0, & \text{if}\ \ -\kappa/2\leq x-L/2\leq \kappa/2, \\ 
\displaystyle -\frac{\delta_0}{c\kappa - \kappa/2}\left(x-\frac L 2 
-c\kappa\right), & \text{if}\qquad \kappa/2\leq x-L/2\leq c\kappa, \\ 
\displaystyle 0, & \text{otherwise}, 
\end{cases} 
\end{equation} 
where $L = 10^6\,\mathrm m$, $\delta_0 = 500\,\mathrm m$, $k=L/10$. The bottom 
was located at $z=-\beta(x)$, where $\beta(x) = h_0-\widetilde \beta(x)$, $h_0 = 
1000\,\mathrm m$, and the problem \eqref{eq:3p9} was solved with characteristic 
boundary conditions and initial conditions $\eta^0(x) = \eta_0 = 0$ and 
$u^0(x)=u_0$, where the constant $u_0$ was varied in order to give flows with 
different Froude numbers $Fr = u_0/\sqrt{gh_0}$. We solved 
\eqref{eq:3p9}--\eqref{eq:3p10} numerically with piecewise linear elements and 
RK4 on a uniform mesh with $h=1000\,\mathrm m$, $k = 1\,\mathrm s$. Some 
profiles of the steady state of the free surface $\eta$ and the associated 
bottom function $\beta(x)$ for various Froude numbers and values of the 
parameter $c$ are shown in Figure \ref{fig:5}. As expected the eventual 
maximum value of $\eta$ decreases as $Fr$ increases; the results are consistent 
with those of \cite{Shiue}. 

\begin{figure}[htb]
\centering 

\begin{subfigure}[b]{0.49\textwidth}
\includegraphics[width=\textwidth]{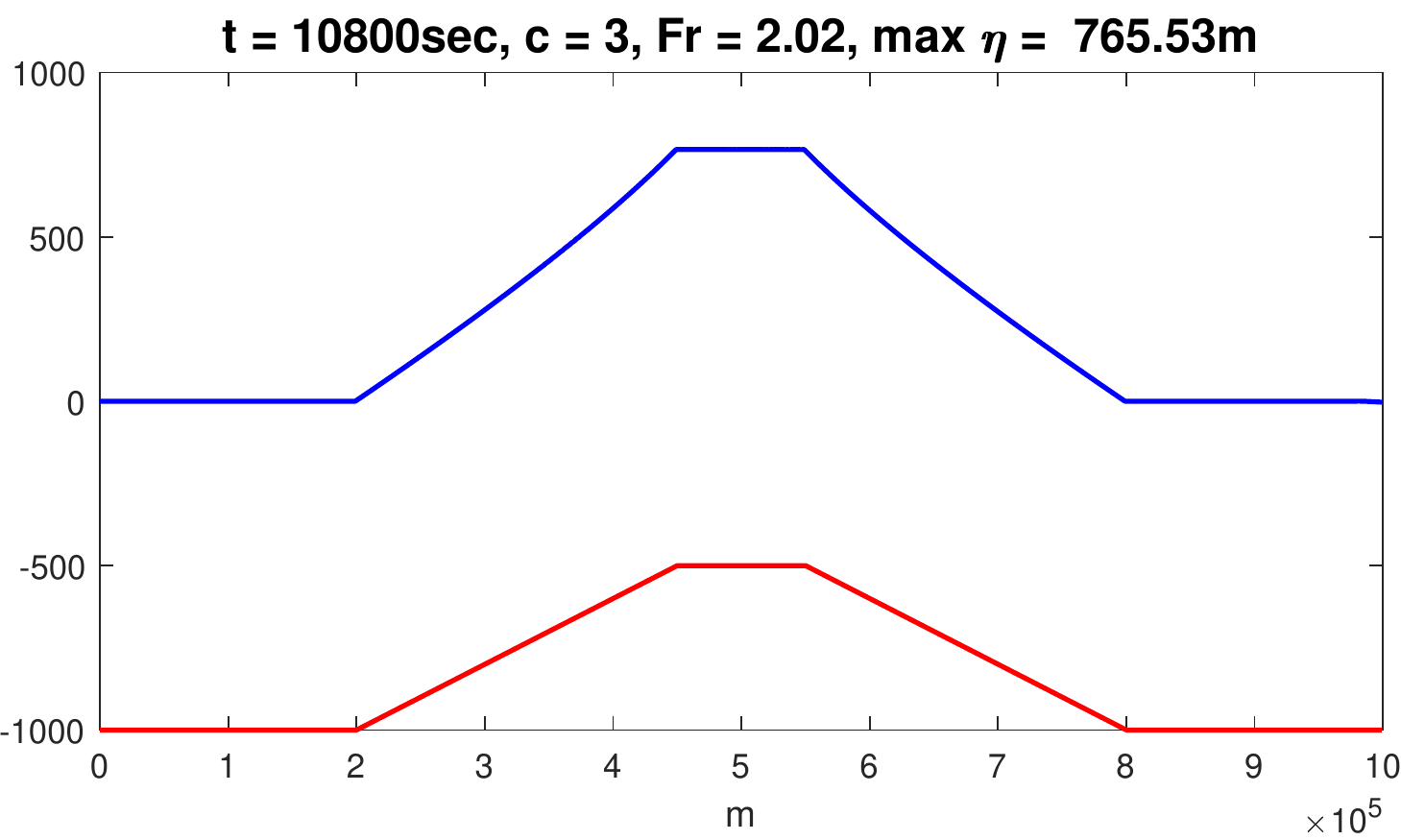}
\end{subfigure}\hspace{.01\textwidth}
\begin{subfigure}[b]{0.49\textwidth}
\includegraphics[width=\textwidth]{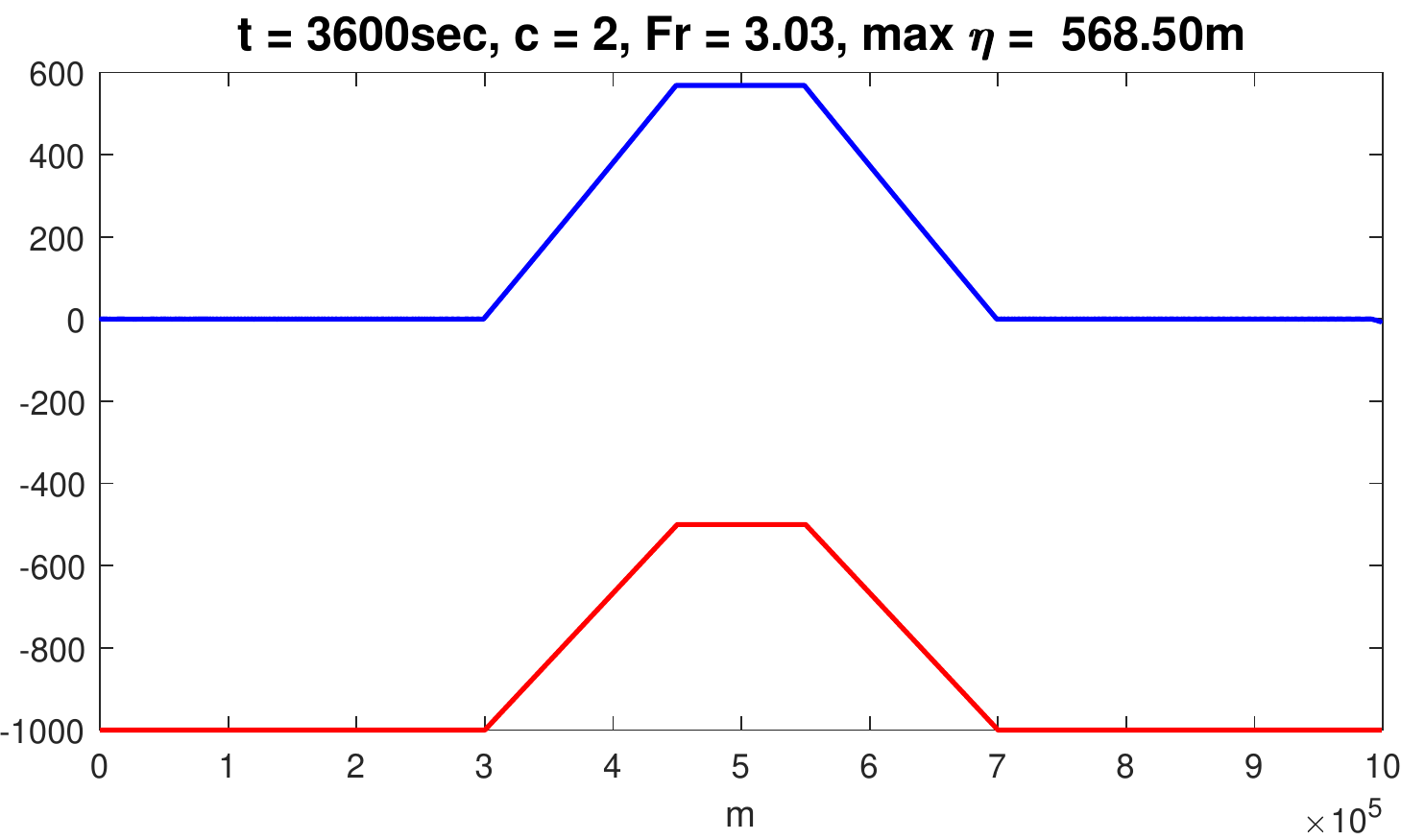}
\end{subfigure}\\[1ex]
\begin{subfigure}[b]{0.49\textwidth}
\includegraphics[width=\textwidth]{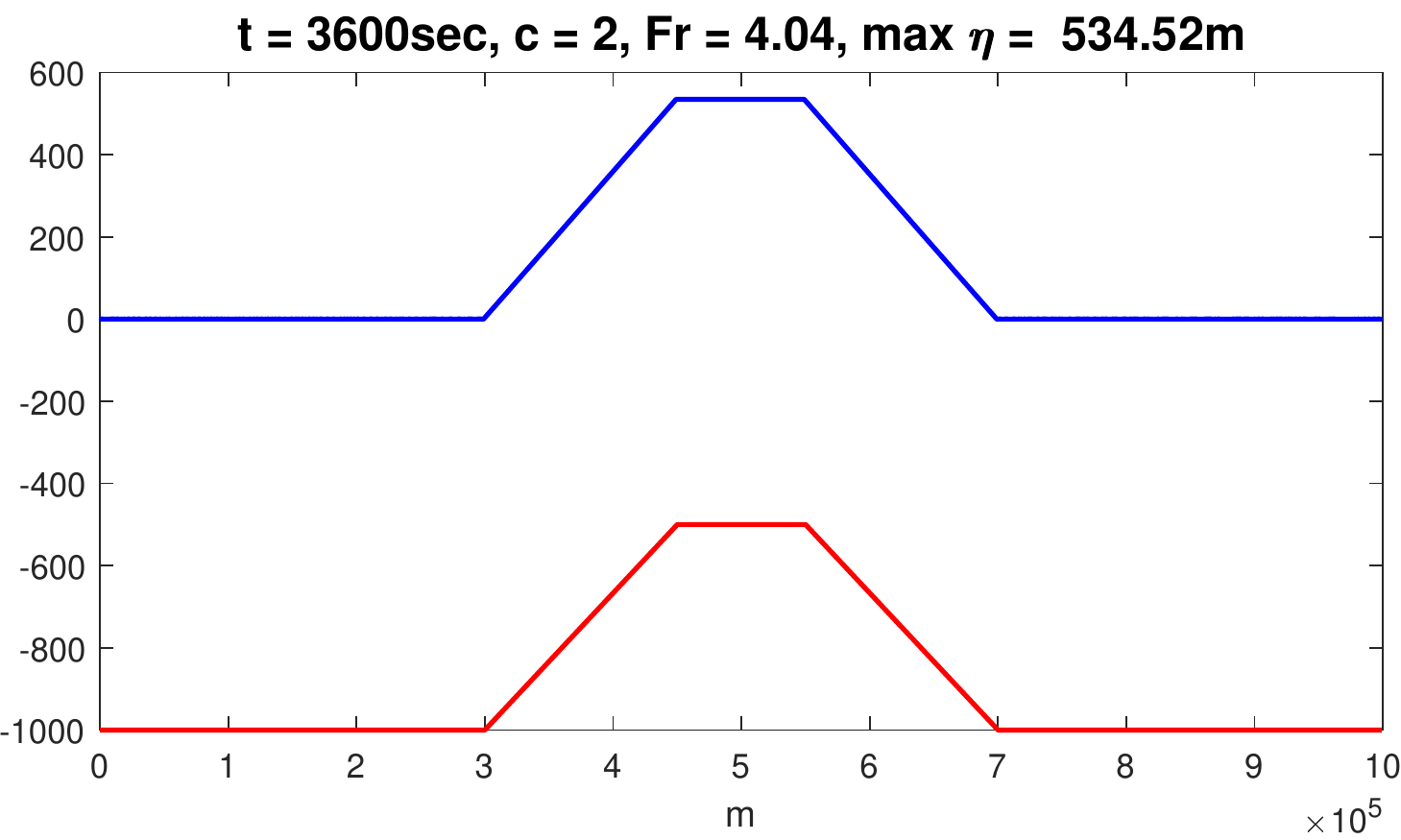}
\end{subfigure}\hspace{.01\textwidth}
\begin{subfigure}[b]{0.49\textwidth}
\includegraphics[width=\textwidth]{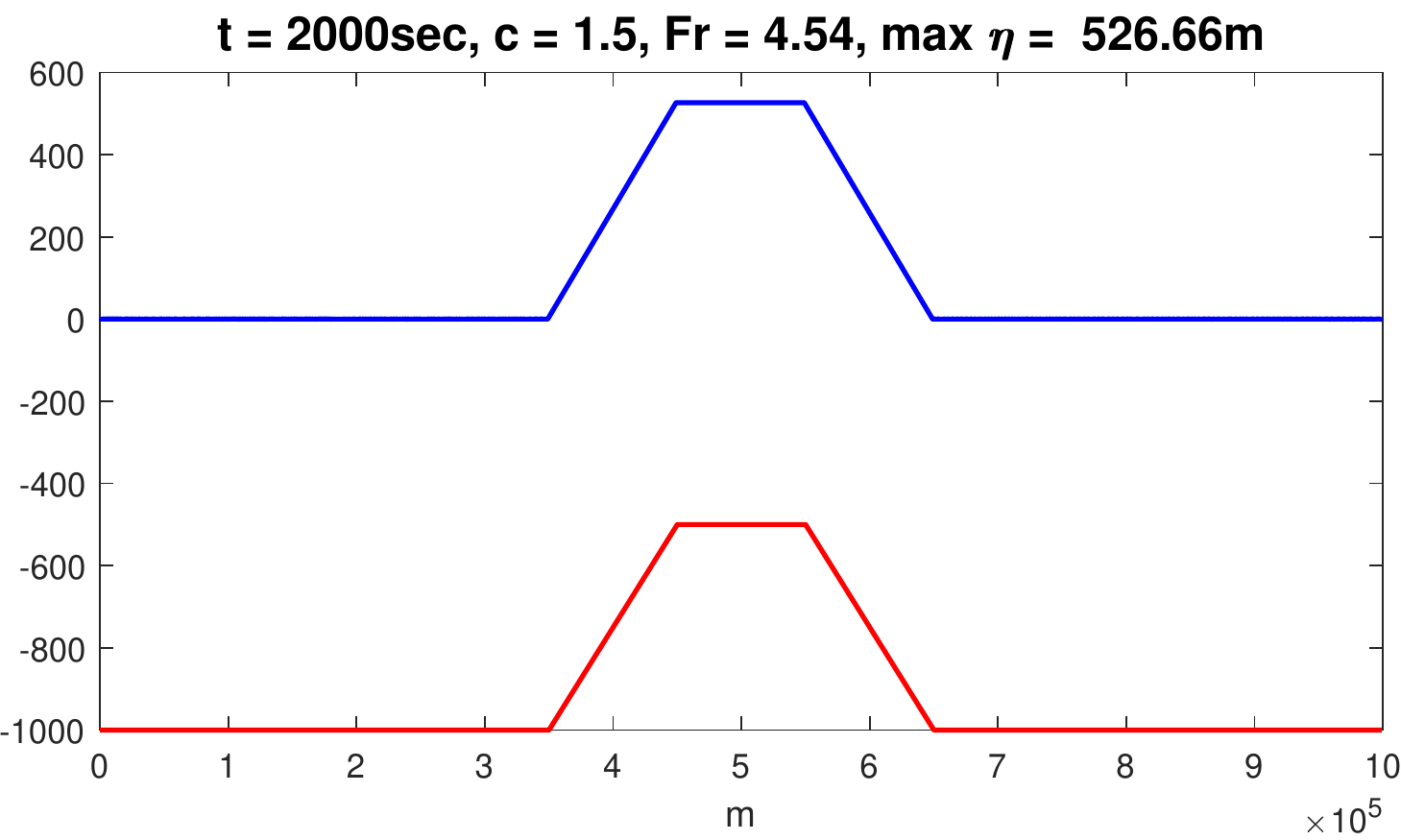}
\end{subfigure}\\[1ex]
\caption{Supercritical flows over a trapezoidal bottom, 
\eqref{eq:3p9}--\eqref{eq:3p10}, $r=2$, $h=1000\,\mathrm m$, $k = 1\,\mathrm s$. 
(Upper figures: steady-state $\eta(x)$; lower figures: $\beta(x)$.)\label{fig:5}} 
\end{figure}

In an example of a dimensional \emph{subcritical} flow we modified the profile 
given in \S 5.1 of \cite{Shiue} in order to avoid discontinuity formation. Thus, 
the initial $\eta$-profile was rounded and its amplitude decreased. Let 
$\widetilde\beta$ be defined by 
\begin{equation} \label{eq:3p11} 
\widetilde\beta(x) = 
\begin{cases} 
\displaystyle \frac \delta 2 + \frac \delta 2 \cos 
\left[\frac{\pi(x-L/2)}{\kappa}\right], & \displaystyle \text{if}\ \ \left|x - 
\frac L 2 \right|<\kappa, \\ 
0, & \text{otherwise}, 
\end{cases} 
\end{equation} 
where $L=10^6\,\mathrm m$, $\delta = 5000\,\mathrm m$, $k=L/10$. The bottom was 
taken at $z=-\beta(x)$, where $\beta(x) = h_0-\widetilde\beta(x)$, $h_0 = 
10^4\,\mathrm m$, and the problem \eqref{eq:3p9} was solved with characteristic 
boundary conditions with $\eta_0 = u_0 = 0$ and $\eta^0(x) = 
0.2\,\varepsilon\,h_0\exp \left[-5{\cdot}10^{-8}\left(x-3L/20)/10\right)^2\right]$, 
$0\leq x\leq L$, where $\varepsilon = 0.2$. 

\begin{figure}[htbp]
\centering 

\begin{subfigure}[b]{0.49\textwidth}
\includegraphics[width=\textwidth]{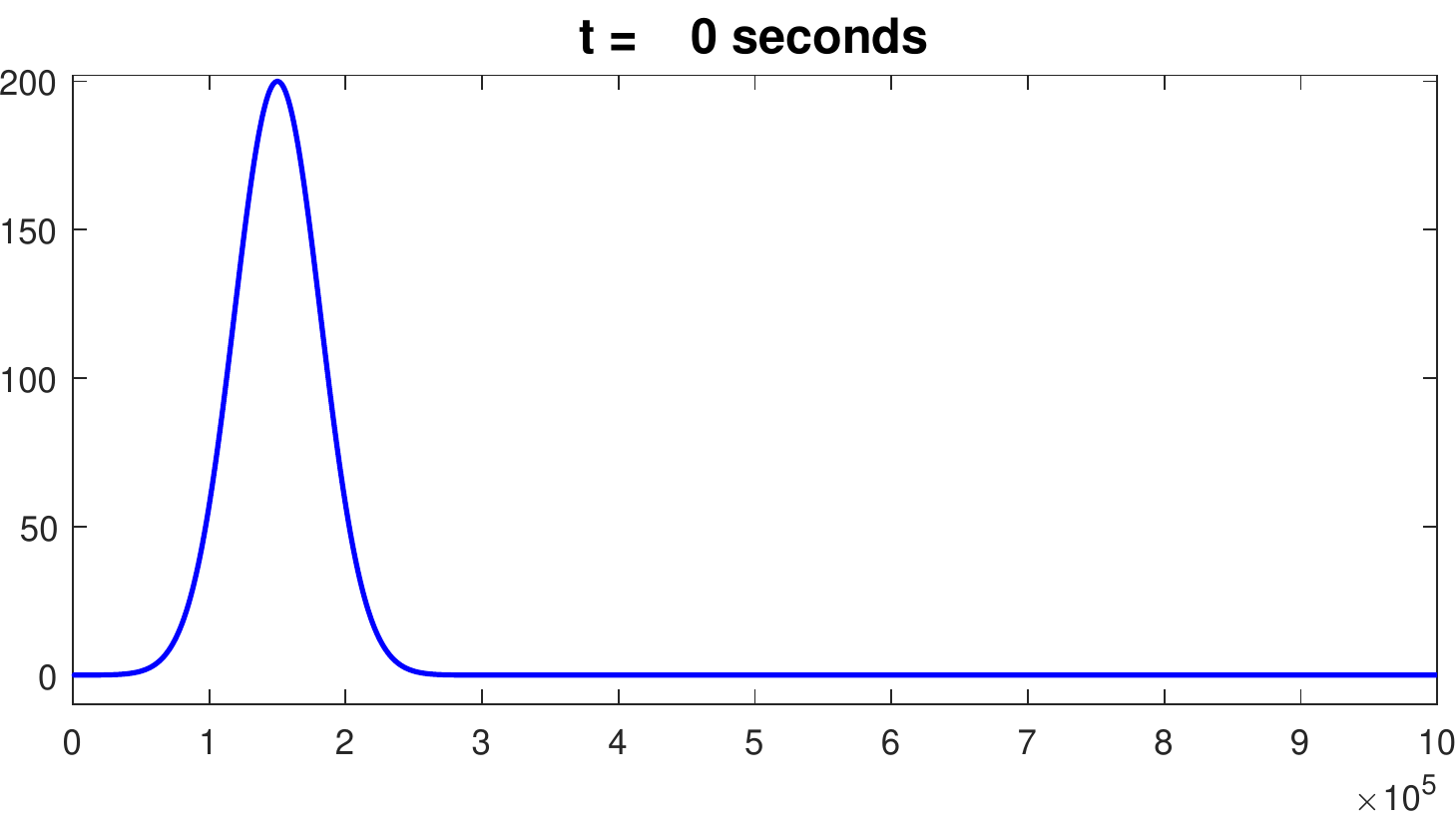}
\end{subfigure}\hspace{.01\textwidth}
\begin{subfigure}[b]{0.49\textwidth}
\includegraphics[width=\textwidth]{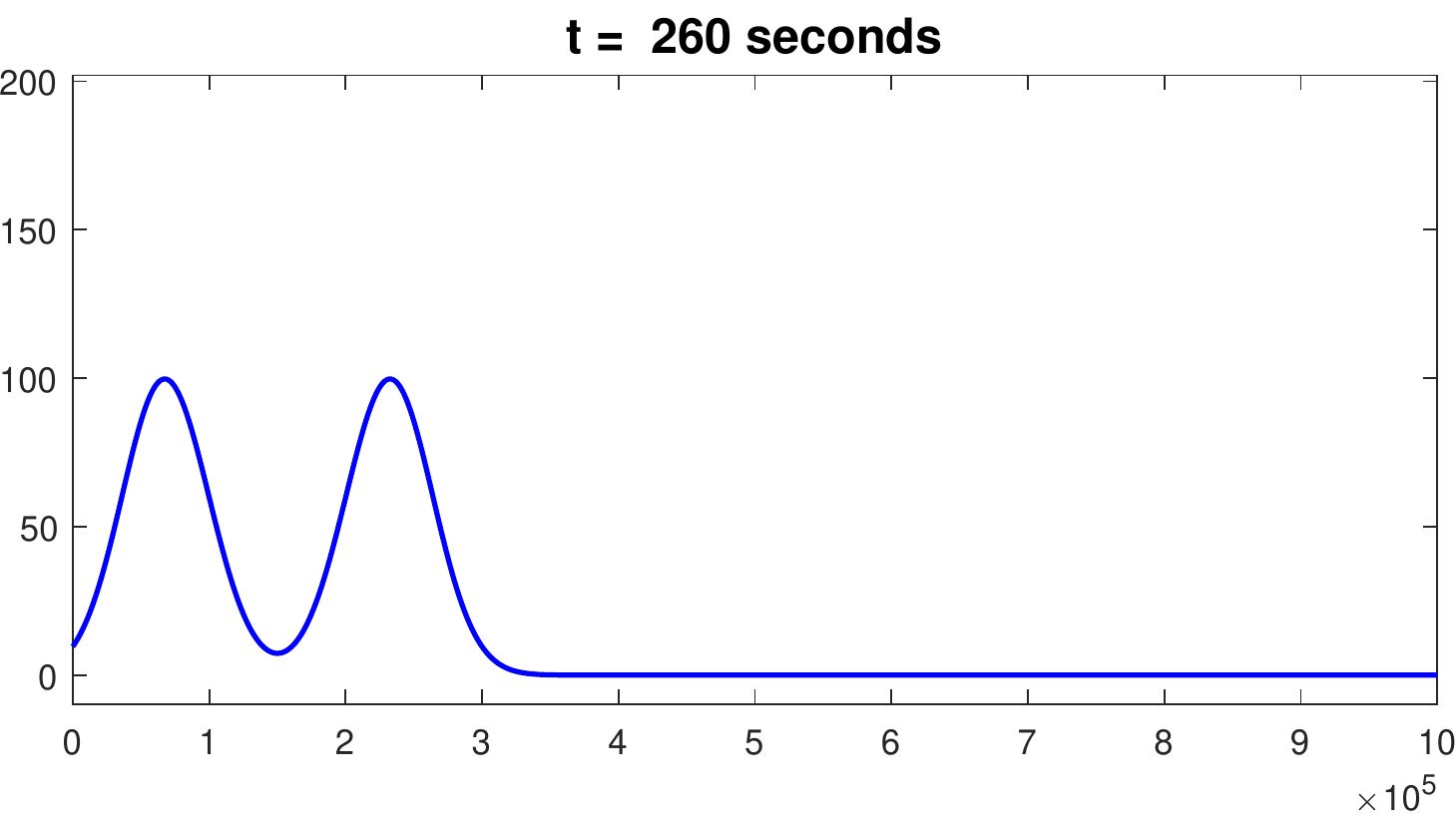}
\end{subfigure}\\[1ex]
\begin{subfigure}[b]{0.49\textwidth}
\includegraphics[width=\textwidth]{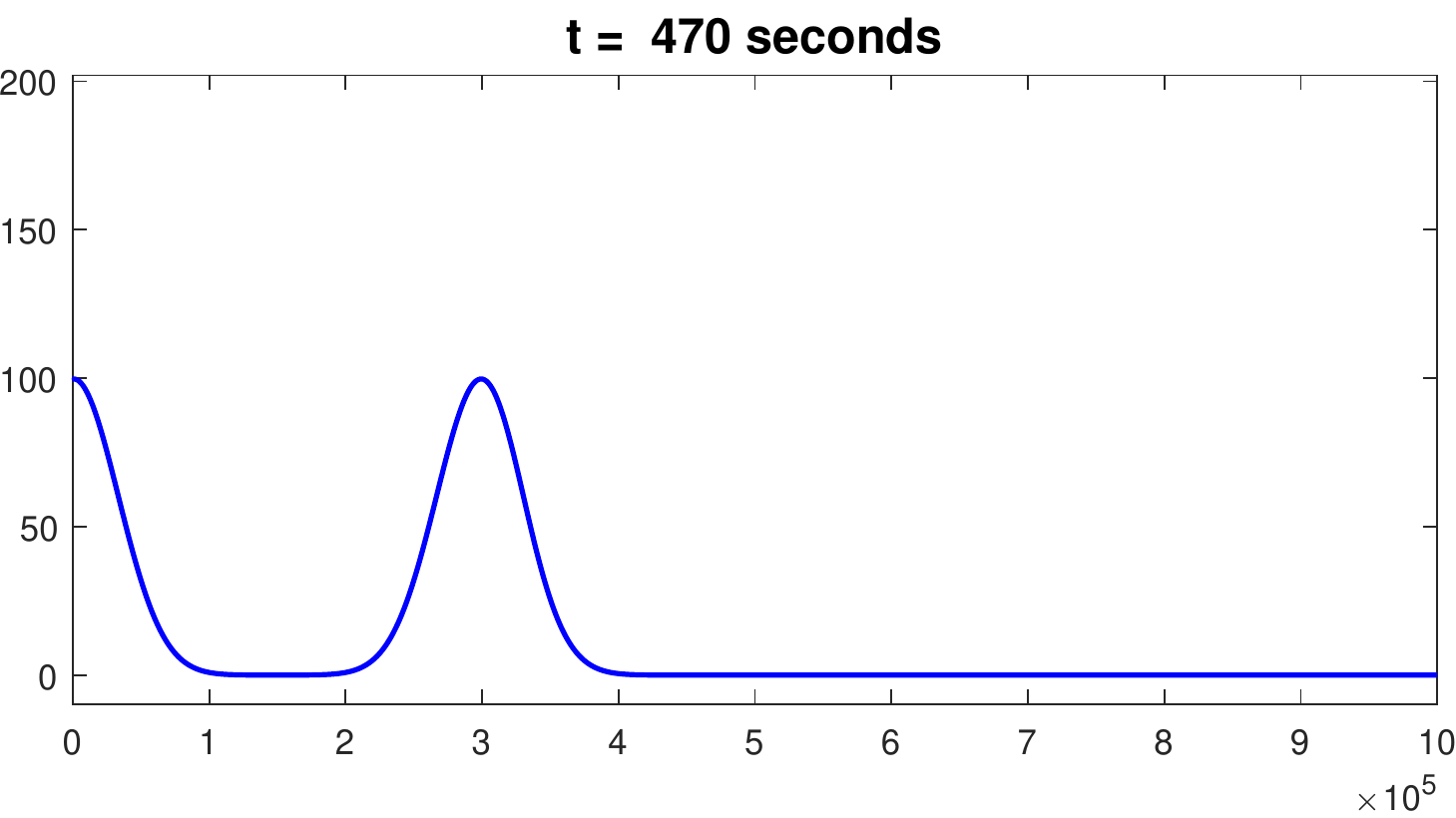}
\end{subfigure}\hspace{.01\textwidth}
\begin{subfigure}[b]{0.49\textwidth}
\includegraphics[width=\textwidth]{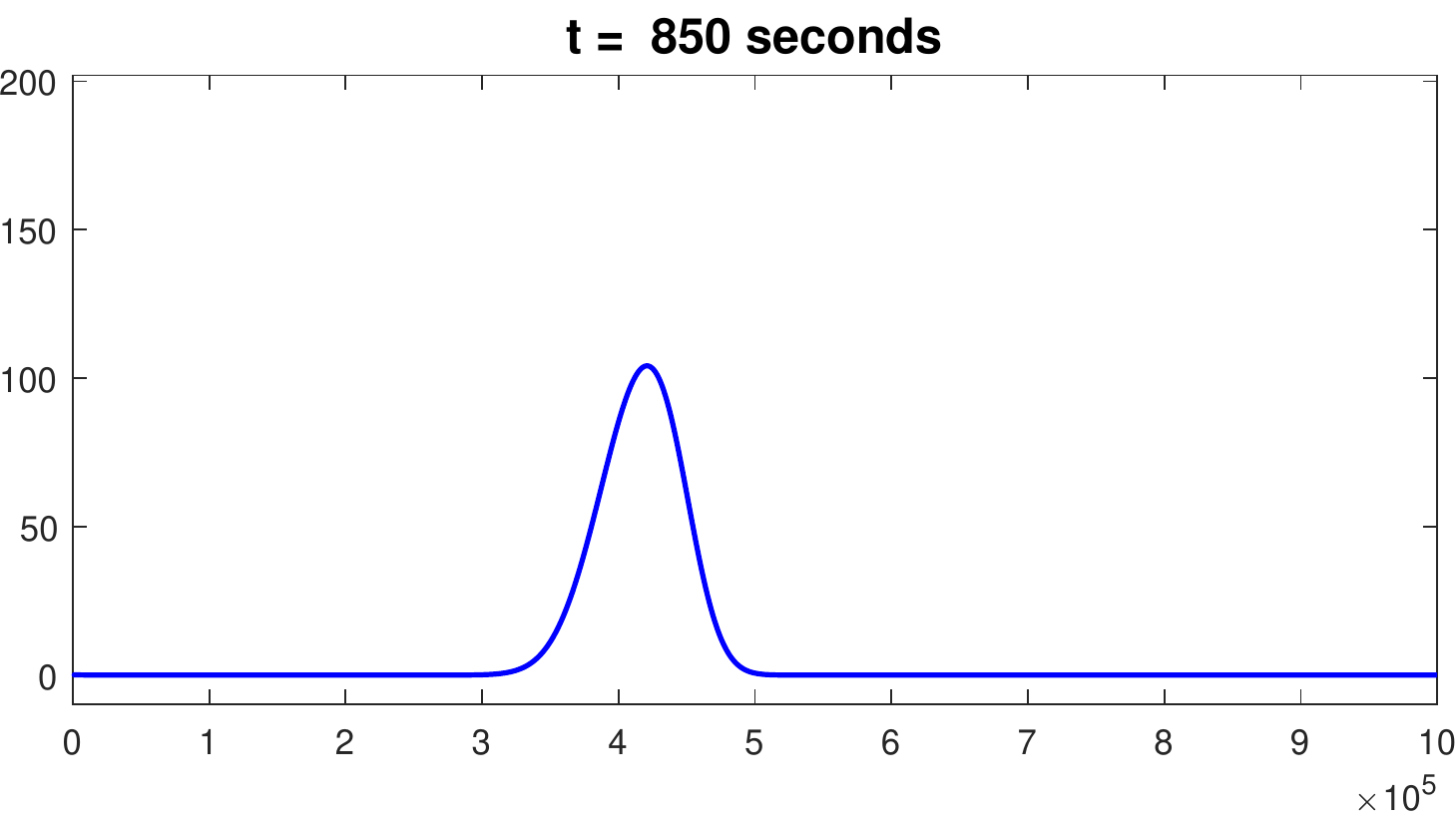}
\end{subfigure}\\[1ex]
\begin{subfigure}[b]{0.49\textwidth}
\includegraphics[width=\textwidth]{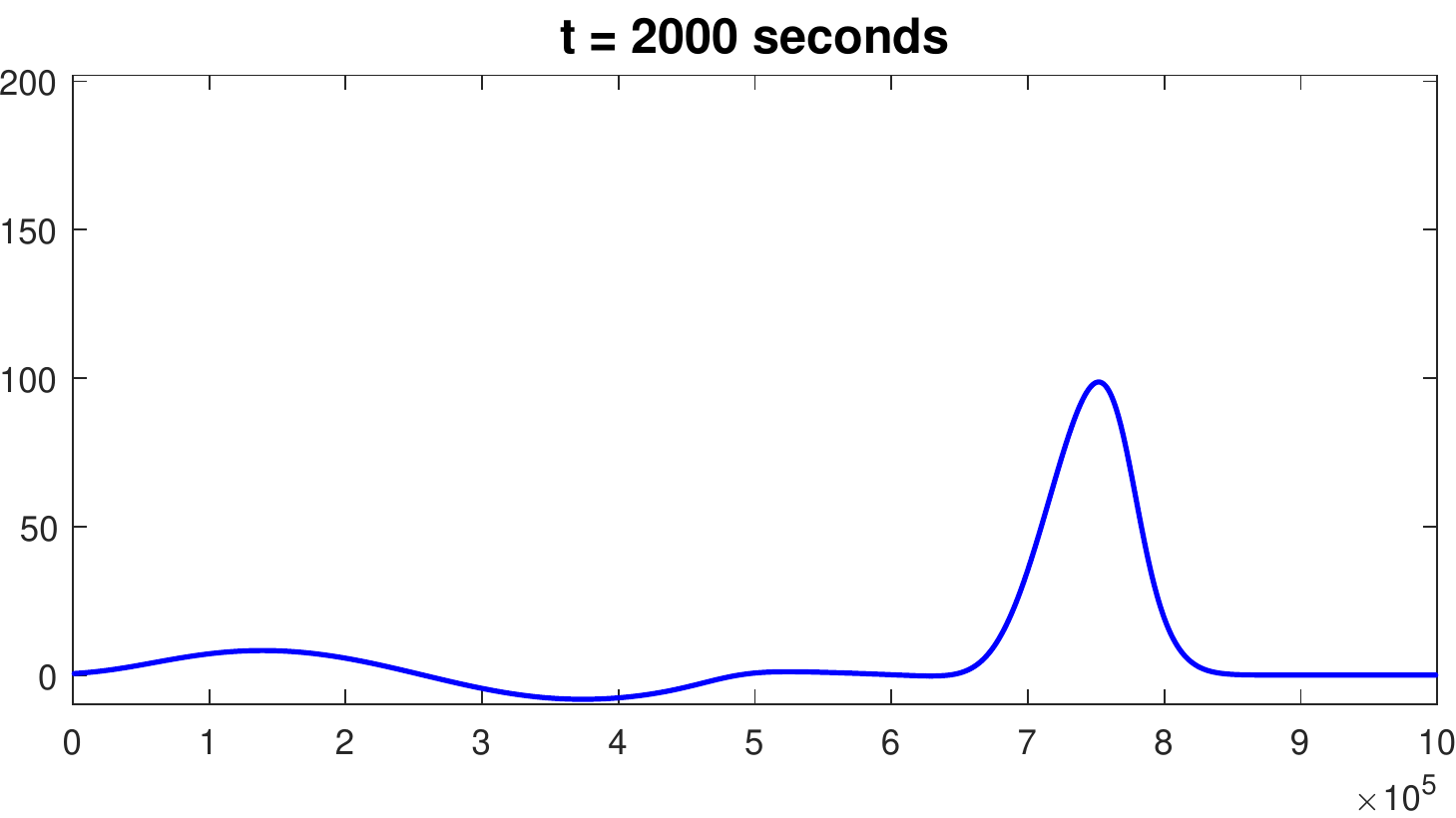}
\end{subfigure}\hspace{.01\textwidth}
\begin{subfigure}[b]{0.49\textwidth}
\includegraphics[width=\textwidth]{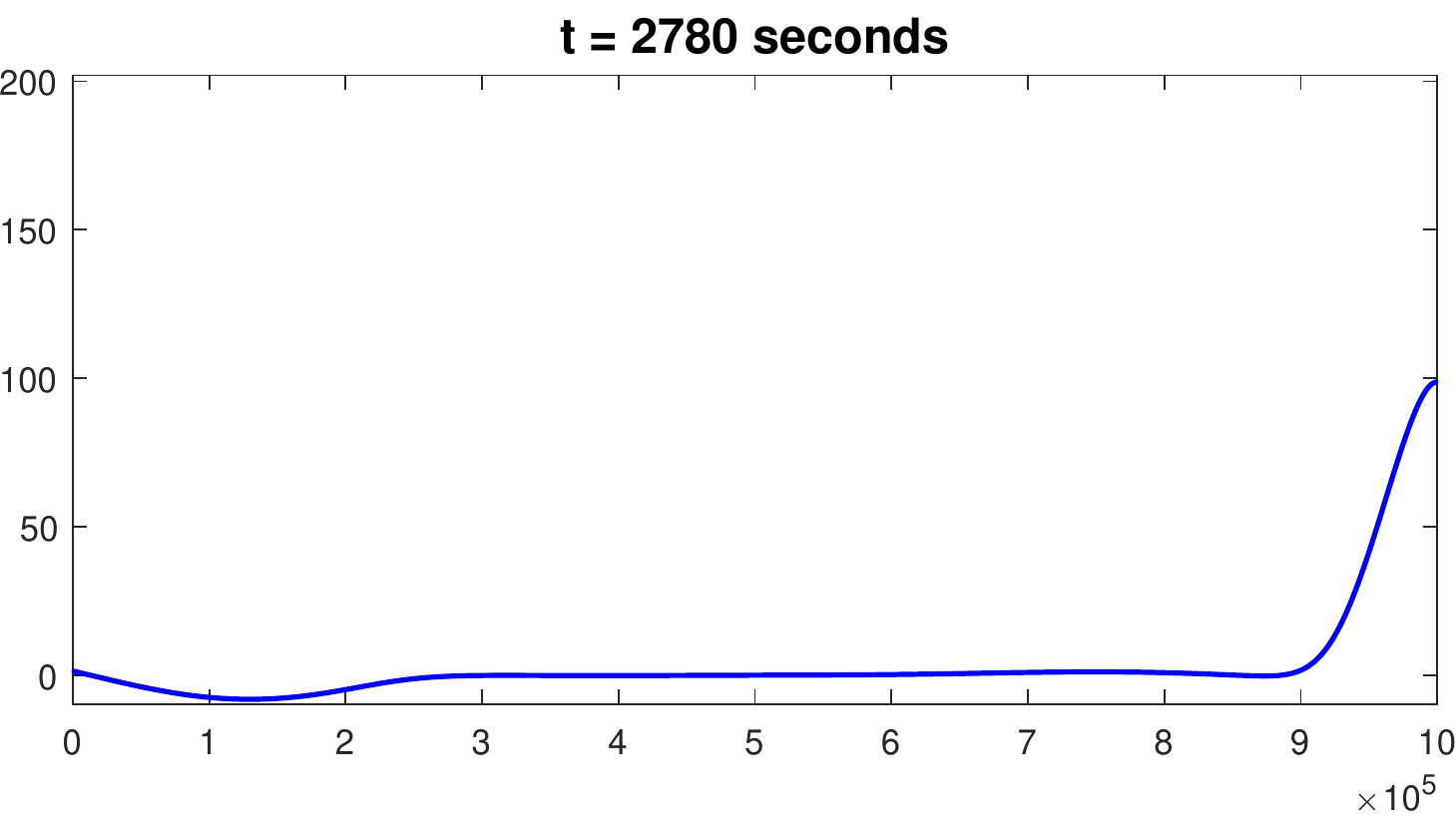}
\end{subfigure}\\[1ex]
\begin{subfigure}[b]{0.49\textwidth}
\includegraphics[width=\textwidth]{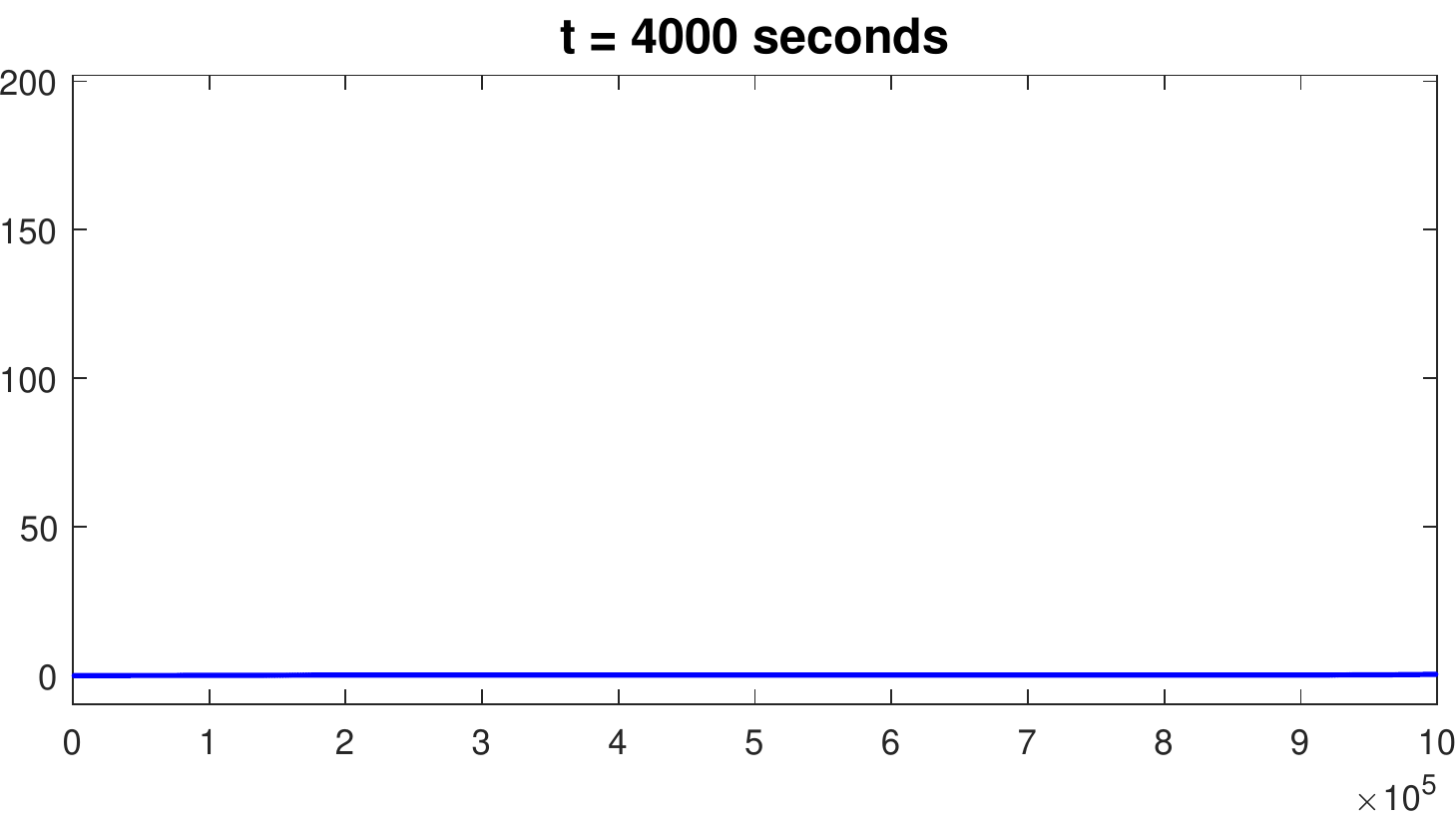}
\end{subfigure}\\[1ex]
\caption{Subcritical flow over a hump, \eqref{eq:3p9}, \eqref{eq:3p11}, $r=2$, 
$h=1000\,\mathrm m$, $k = 1\,\mathrm s$, $\eta$ profiles\label{fig:6}} 
\end{figure}

The evolution of the $\eta$-profiles is shown in Figure \ref{fig:6} up to 
the attainment of the steady state $\eta = u = 0$. The results resemble 
qualitatively those of \cite{Shiue}.

\subsection{Shallow water equations in balance-law form} 
\label{sec:3p2} 

In this section we consider the numerical solution by the standard Galerkin 
finite element method of the shallow water equations written in 
\emph{balance-law form} (i.e.\ in conservation-law form with a source term), as 
\begin{equation} \label{eq:3p12} 
\begin{aligned} 
& d_t + (du)_x = 0, \\ 
& (du)_t + \left(du^2 + \tfrac 1 2 d^2\right)_x = \beta^\prime(x)d, 
\end{aligned} 
\end{equation} 
where $d=\eta+\beta$ is the water depth assumed as always to be positive; the 
variables in \eqref{eq:3p12} are nondimensional. Is is straightforward to see 
that the system \eqref{eq:3p12} is equivalent to \eqref{eq:SW} since $d\neq 0$. 
In the sequel we will consider the periodic initial-value problem for 
\eqref{eq:3p12} on the spatial interval $[0,1]$ and assume that it has 
sufficiently smooth solutions for $t\in [0,T]$, provided that $\beta$ is smooth 
and $1$-periodic. We will discretize the problem in space on a uniform or 
quasiuniform mesh $\{x_i\}$ in $[0,1]$ and seek approximations $d_h$, $u_h$ of 
$d$, $u$, respectively, in the finite element space $S_{h,p} = \{\phi\in C_p^k: 
\phi\big|_{[x_j,x_{j+1}]}\in\mathbb P_{r-1}, \text{all}\ j\}$, where as usual 
$r$, $k$ are integers such that $r\geq 2$, $0\leq k\le r-2$, and $C_p^k$ are the 
$k$ times continuously differentiable, periodic functions on $[0,1]$. The 
semidiscrete approximations satisfy 
\begin{align} 
& \begin{aligned} 
& (d_{ht},\phi) + ((d_hu_h,\phi) = 0, \\ 
& ((d_hu_h)_t,\phi) + ((d_hu_h^2 + \tfrac 1 2 d_h^2)_x,\phi) = 
(\beta^\prime d_h,\phi), 
\end{aligned}\ \ \forall \phi\in S_{h,p},\ 0\leq t\leq T, \label{eq:3p13} \\ 
& d_h(0) = \opP d^0,\quad u_h(0) = \opP u^0, \label{eq:3p14} 
\end{align} 
where $d^0$, $u^0$ are the initial conditions of $d$ and $u$ and $\opP$ is now 
the $L^2$ projection operator onto $S_{h,p}$. (The second equation in 
\eqref{eq:3p13} is advanced in time for the variable $v_h=d_hu_h$ and $u_h$ is 
recovered as $v_h/d_h$.) In the case of a uniform mesh it is expected that the 
$L^2$ errors of the semidiscrete solution will be of $\mathcal O(h^r)$ while, 
for a quasiuniform mesh, of $\mathcal O(h^{r-1})$, cf.\ \cite{AD1}. We 
verified these rates of accuracy in numerical experiments using $C^0$ linear, 
$C^2$ cubic and $C^4$ quintic splines (i.e.\ spaces $S_{h,p}$ with $r=2$, $4$, 
and $6$, respectively) on uniform and nonuniform spatial meshes, coupled with 
explicit Runge Kutta schemes of third, fourth, and sixth order of accuracy,  
respectively. The fully discrete methods were stable under Courant number 
restrictions. We note that in order to preserve the optimal order of accuracy, 
say in the case of a uniform mesh, one has to compute the integrals that occur in 
the finite element equations using, on each subinterval $[x_i,x_{i+1}]$, an 
$s$-point Gauss quadrature rule with $s\geq r-1$. For example, in the case of a 
cubic spline spatial discretization, a 3-point Gauss rule is sufficient. 

It is interesting to examine whether the method \eqref{eq:3p13}--\eqref{eq:3p14} 
preserves the still water solution $\eta=0$, $u=0$, e.g.\ of the periodic 
i.v.p.\ for the shallow water equations in the form \eqref{eq:3p12}. 
Discretizations that approximate accurately this solution are called `well 
balanced', cf.\ e.g.\ \cite{BV}, and \cite{XZS} and its references. (It is easy 
to check that the standard Galerkin semidiscretization of e.g.\ the periodic ivp 
for \eqref{eq:SW}, i.e.\ for the shallow water equations in their 
`nonconservative' form, is trivially well-balanced, since it satisfies 
$\eta_h(x,t)=\alpha$, $\alpha$ constant, $u_h(x,t)=0$ for all $t\geq 0$ and 
$x\in [0,1]$, provided $\eta_h(x,0)=\alpha$, $u_h(x,0)=0$. So, our attention is 
turned to the periodic ivp for \eqref{eq:3p12} and its standard Galerkin 
semidiscretization \eqref{eq:3p13}--\eqref{eq:3p14}.)

For this purpose, since $d=\eta+\beta$, assume that (suppressing the 
$x$-dependence in the variables), $d_h(0) = \opP\beta$, $u_h(0) = 0$ in 
\eqref{eq:3p14}, and ask whether there exist time-independent solutions of 
\eqref{eq:3p13}--\eqref{eq:3p14} that approximate well the steady state solution 
$d=\beta$, $u=0$ of the continuous problem. Taking $u_h=0$ in \eqref{eq:3p13} we 
see that a steady-state solution $d_h$ must satisfy $(d_hd_{hx},\phi) 
=(d_h\beta^\prime,\phi)$, for all $\phi$ in $S_{h,p}$, from which it is evident 
that the source term $\beta$ should be replaced by some approximations 
$\beta_h\in S_{h,p}$ thereof. Moreover for the equation $(d_hd_{h,x},\phi) = 
(d_h\beta_h^\prime,\phi)$ to hold for $\phi\in S_{h,p}$, (this will imply that 
$d_h=\beta_h$, i.e.\ good balance), it is necessary that the integrals on each 
subinterval $[x_i,x_{i+1}]$ that contribute to these $L^2$ inner products should 
be evaluated \emph{exactly}. Since both integrands are polynomials of degree at 
most $3r-4$ on each subinterval, if an $s$-point Gauss quadrature rule is used 
(recall that such a rule is exact for polynomials of degree at most $2s-1$), 
then it should hold that $s\geq \tfrac 3 2 (r-1)$. For example, in the case of 
cubic splines $(r=4)$, a 5-point Gauss rule must be used. Therefore, although a 
3-point Gauss is enough to preserve the optimal-order $\mathcal O(h^4)$ 
$L^2$-error estimate, good balance of the solution with cubic splines requires 
that a 5-point Gauss rule be used. This is confirmed by the results of the 
following experiment. We solve the periodic ivp for \eqref{eq:3p12} on $[0,1]$ 
by \eqref{eq:3p13}--\eqref{eq:3p14} using cubic splines for the spatial 
discretization on a uniform mesh and taking $\beta(x) = 
1-0.3\exp(-1000(x-0.5)^2)$, $h=0.02$, $k=0.01$, $u_h(0)=0$, $d_h(0)=\opP\beta$. 
Table \ref{tab:3} shows the error $d_h(1)-d_h(0)$ (where 
$d_h(1)=d_h\big|_{T=1}$) in the $L^2$ and $L^\infty$ norms when the analytical 
formula of $\beta$ or $\beta_h=\opP\beta$ is taken in the source term, and a 3- 
or a 5-point Gauss rule is used. It is evident that when $b_h=\opP\beta$ and 
\begin{table}[htbp] \tt 
\centering
{\small 
\begin{tabular}[b]{|*{5}{c|}}
\hline
$d_h(0)$ & \rm $\beta$ in source term & 
\parbox{5em}{\rm\setlength{\baselineskip}{0pt}$s$ (-point\\ Gauss rule)} 
& $\|d_h(1)-d_h(0)\|$ & $\|d_h(1)-d_h(0)\|_\infty$ \\ \hline
$\opP\beta$ &  \rm analytical formula & 3 & 1.8191e-4\phantom{0} & 
8.3845e-4\phantom{0} \\ \hline 
$\opP\beta$ & $\beta_h=\opP\beta$ & 3 & 1.2204e-6\phantom{0} & 
4.7085e-6\phantom{0} \\ \hline 
$\opP\beta$ & $\beta_h=\opP\beta$ & 5 & 3.7458e-15 & 1.0214e-14 \\ \hline 
\end{tabular}
}
\caption{Treatment of source terms and effect of quadrature in 
\eqref{eq:3p13}--\eqref{eq:3p14}, cubic splines and RK4, $h=0.02$, $k=0.01$, 
$T=1$ \label{tab:3}} 
\end{table}
a 5-point Gauss quadrature rule is used, the scheme is well balanced to roundoff 
and there is no influence of the time-stepping error. It should be noted that 
similar results were found when $d_h(0)$ and $\beta_h$ were taken as the cubic 
spline interpolant of $\beta$ at the nodes, and when piecewise smooth bottom 
profiles, e.g.\ like a parabolic perturbation of $\beta=1$ supported in the interval of 
$[0,1]$, were considered.

\section*{Acknowledgement}
\label{sec:ack} 
This work was partially supported by the project ``Innovative Actions in 
Environmental Research and Development (PErAn)'' (MIS 5002358), implemented 
under the ``Action for the strategic development of the Research and 
Technological sector'' funded by the Operational Program ``Competitiveness, and 
Innovation'' (NSRF 2014-2020) and cofinanced by Greece and the EU (European 
Regional Development Fund).

\bibliographystyle{elsarticle-num} 
\bibliography{ref}

\end{document}